\begin{document}

\title{Analysis of Deep Ritz Methods for Laplace Equations with Dirichlet Boundary Condition
}


\author{Chenguang Duan  \and
        Yuling Jiao \and
        Yanming Lai \and
        Xiliang Lu \and
        Qimeng Quan \and
        Jerry Zhijian Yang
}


\institute{Chenguang Duan \at
           School of Mathematics and Statistics, Wuhan University, Wuhan 430072, P.R. China. \\
           \email{cgduan.math@whu.edu.cn}
           \and
           Yuling Jiao \at
           School of Mathematics and Statistics, and Hubei Key Laboratory of Computational Science, Wuhan University, Wuhan 430072, P.R. China. \\
           \email{yulingjiaomath@whu.edu.cn}
           \and
           Yanming Lai \at
           School of Mathematics and Statistics, Wuhan University, Wuhan 430072, P.R. China. \\
           \email{laiyanming@whu.edu.cn}
           \and
           Xiliang Lu \at
           School of Mathematics and Statistics, and Hubei Key Laboratory of Computational Science, Wuhan University, Wuhan 430072, P.R. China. \\
           \email{xllv.math@whu.edu.cn}
           \and
           Qimeng Quan \at
           School of Mathematics and Statistics, Wuhan University, Wuhan 430072, P.R. China. \\
           \email{quanqm@whu.edu.cn}
           \and
           Jerry Zhijian Yang \at
           School of Mathematics and Statistics, and Hubei Key Laboratory of Computational Science, Wuhan University, Wuhan 430072, P.R. China. \\
           \email{zjyang.math@whu.edu.cn}
}

\date{Received: date / Accepted: date}

\maketitle

\begin{abstract}
\par Deep Ritz methods (DRM) have been proven numerically to be efficient in solving partial differential equations. In this paper, we present a convergence rate in $H^{1}$ norm for deep Ritz methods for Laplace equations with Dirichlet boundary condition, where the error depends on the depth and width in the deep neural networks and the number of samples explicitly. Further we can properly choose the depth and width in the deep neural networks in terms of the number of training samples. The main idea of the proof is to decompose the total error of DRM into three parts, that is approximation error, statistical error and the error caused by the boundary penalty. We bound the approximation error in $H^{1}$ norm with $\mathrm{ReLU}^{2}$ networks and control the statistical error via Rademacher complexity. In particular, we derive the bound on the Rademacher complexity of the non-Lipschitz composition of gradient norm with $\mathrm{ReLU}^{2}$ network, which is of immense independent  interest. We also analysis the error inducing by  the boundary penalty method and give a prior rule for tuning the penalty parameter.
\keywords{deep Ritz methods \and convergence rate \and Dirichlet boundary condition \and approximation error \and Rademacher complexity}
\subclass{65C20}
\end{abstract}

\section{Introduction}
\label{intro}
\par Partial differential equations (PDEs) are one of the fundamental mathematical models in studying a variety of phenomenons arising in science and engineering. There have been established many conventional numerical methods successfully for solving PDEs in the case of low dimension $(d\leq3)$, particularly the finite element method \cite{brenner2007mathematical,ciarlet2002finite,Quarteroni2008Numerical,Thomas2013Numerical,Hughes2012the}. However,
one will encounter some  difficulties in both of theoretical analysis and numerical implementation when extending conventional numerical schemes  to high-dimensional PDEs.
The classic analysis of convergence, stability and any other properties will be trapped into troublesome situation due to the complex construction of finite element space \cite{ciarlet2002finite,brenner2007mathematical}. Moreover, in the term of practical computation, the scale of the discrete problem will increase exponentially with respect to the  dimension.

Motivated by the well-known fact that deep learning method for  high-dimensional data analysis has been achieved great successful applications in discriminative, generative and reinforcement learning \cite{he2015delving,Goodfellow2014Generative,silver2016mastering}, solving high dimensional PDEs with  deep neural networks becomes an extremely potential  approach and  has  attracted much attentions  \cite{Cosmin2019Artificial,Justin2018DGM,DeepXDE,raissi2019physics,Weinan2017The,Yaohua2020weak,Berner2020Numerically,Han2018solving}. Roughly speaking, these works can be divided into three categories. The first category is using deep neural network to improve  classical numerical methods, see for example  \cite{Kiwon2020Solver,Yufei2020Learning,hsieh2018learning,Greenfeld2019Learning}.
In the second category, the neural operator is introduced to learn mappings between infinite-dimensional spaces with neural networks \cite{Li2020Advances,anandkumar2020neural,li2021fourier}.
For the last category, one utilizes deep neural networks to approximate the solutions of PDEs directly including physics-informed neural networks (PINNs) \cite{raissi2019physics}, deep Ritz method (DRM)  \cite{Weinan2017The} and weak adversarial networks (WAN) \cite{Yaohua2020weak}. PINNs  is based on residual minimization for solving PDEs  \cite{Cosmin2019Artificial,Justin2018DGM,DeepXDE,raissi2019physics}.
Proceed from the variational form, \cite{Weinan2017The,Yaohua2020weak,Xu2020finite} propose neural-network based methods related to classical Ritz and Galerkin method. In \cite{Yaohua2020weak}, WAN are proposed inspired by Galerkin method. Based on Ritz method, \cite{Weinan2017The} proposes the DRM to solve variational problems corresponding to a class of PDEs.

\subsection{Related works and contributions}
\par
The idea using neural networks to solve PDEs goes back to 1990's \cite{Isaac1998Artificial,Dissanayake1994neural}.
Although there are great empirical achievements in recent several years, a challenging and interesting  question is to provide a rigorous error analysis such as finite element method.
Several recent efforts  have been devoted to making processes along this line,  see for example  \cite{e2020observations,Luo2020TwoLayerNN,Mishra2020EstimatesOT,Mller2021ErrorEF,lu2021priori,hong2021rademacher,Shin2020ErrorEO,Wang2020WhenAW,e2021barron}.
In \cite{Luo2020TwoLayerNN}, least squares minimization method with two-layer neural networks is studied, the optimization error under the assumption of over-parametrization and generalization error without the over-parametrization assumption are analyzed.
In \cite{lu2021priori,Xu2020finite}, the generalization error bounds of two-layer neural networks are derived via assuming that the exact solutions lie in spectral Barron space.

Dirichlet boundary condition  corresponding to a constrained minimization problem, which may cause some difficulties in computation. The penalty method has been applied in finite element methods and finite volume method \cite{Babuska1973The,Maury2009Numerical}. It is also been used in deep PDEs solvers \cite{Weinan2017The,raissi2019physics,Xu2020finite} since it is not easy to construct a network with given values on the boundary. We also apply penalty method to DRM with $\mathrm{ReLU}^2$ activation functions, and obtain the error estimation in this work. The main contribution are listed as follows:
\begin{itemize}
	\item
	We derive  a bound on  the approximation error of deep $\mathrm{ReLU}^2$ network in $H^1$ norm, which is of independent interest, see Theorem \ref{thm:apperr}. That is, for any $u_{\lambda}^*\in H^2(\Omega)$, there exist a $\mathrm{ReLU}^2$ network $\bar{u}_{\bar{\phi}}$ with depth $\mathcal{D} \leq \lceil\log_2d\rceil+3,$  width $ \mathcal{W}\leq \mathcal{O}\left(4d\left \lceil \frac{1}{\epsilon}-4\right \rceil^d\right)$ (where $d$ is the dimension),
	such that
	\begin{equation*}
		\left\|u_{\lambda}^*-\bar{u}_{\bar{\phi}}\right\|^2_{H^{1}\left(\Omega\right)}\leq\epsilon^2 \quad  \mbox{and} \quad \left\|Tu_{\lambda}^*-T\bar{u}_{\bar{\phi}}\right\|^2_{L^{2}\left(\partial\Omega\right)}\leq C_{d}\epsilon^{2}.
	\end{equation*}
	\item
	We establish a  bound on  the statistical error in  DRM with the tools of pseudo-dimension, especially we give a bound on
	\begin{equation*}
		\mathbb{E}_{Z_i,\sigma_i, i=1,...,n}\left[\sup_{u_{\phi}\in \mathcal{N}^2}\frac{1}{n}\left|\sum_{i} \sigma_i \left\|\nabla u_{\phi}(Z_i)\right\|^2\right|\right],
	\end{equation*}
	i.e., the Rademacher complexity  of the non-Lipschitz composition of  gradient norm  and   $\mathrm{ReLU}^2$ network,   via  calculating  the
	Pseudo dimension of networks with both $\mathrm{ReLU}$ and  $\mathrm{ReLU}^2$ activation functions, see Theorem \ref{thm:staerr}. The technique we used here is also helpful for bounding the statistical errors to other deep PDEs solvers.
	\item
	We give an upper bound of the error caused by the Robin approximation without additional assumptions, i.e.,  bound the error between the minimizer of the penalized form $u_{\lambda}^{*}$ and the weak solutions of the Laplace equation $u^{*}$, see Theorem \ref{thm:penaltyerr},
	\begin{equation*}
		\|u_{\lambda}^{*}-u^{*}\|_{H^{1}(\Omega)}\leq \mathcal{O}(\lambda^{-1}).
	\end{equation*}
	This result improves the one established in \cite{Mller2021ErrorEF,muller2020deep,hong2021rademacher}.
	\item Based on the above two error bounds we establish a nonasymptotic convergence rate of deep Ritz method for Laplace equation with Dirichlet boundary condition.
	We prove that if we set
	\begin{equation*}
		\mathcal{D} \leq \lceil\log_2d\rceil+3, \ \mathcal{W}\leq \mathcal{O}\left(4d\left \lceil \left(\frac{n}{\log n}\right)^{\frac{1}{2(d+2)}}-4\right \rceil^d \right),
	\end{equation*}
	and
	\begin{equation*}
		\lambda \sim n^{\frac{1}{3(d+2)}}(\log n)^{-\frac{d+3}{3(d+2)}},
	\end{equation*}
	it holds that
	\begin{equation*}
		\mathbb{E}_{\boldsymbol{X},\boldsymbol{Y}}\left[\|\widehat{u}_{\phi}-u^{*}\|_{H^{1}(\Omega)}^2\right]
		\leq \mathcal{O}\left(n^{-\frac{2}{3(d+2)}}\log n\right).
	\end{equation*}
	where $n$ is the number of training samples on  both the domain and the boundary.
	Our theory shed lights  on how to  choose the topological structure of the employed  networks and tune the penalty parameters  to achieve the desired convergence rate in terms of number of training samples.
\end{itemize}

\par Recently, \cite{Mller2021ErrorEF,muller2020deep} also study the convergence of DRM with Dirichlet boundary condition via penalty method.
However, the results of derive in \cite{Mller2021ErrorEF,muller2020deep} are quite different from ours.
Firstly, the approximation results in \cite{Mller2021ErrorEF,muller2020deep} in based on the  approximation error  of $\mathrm{ReLU}$ networks in Sobolev norms established in \cite{guhring2019error}. However, the $\mathrm{ReLU}$ network  may not be suitable for solving PDEs. In this work, we derive an upper bound on the approximation error  of $\mathrm{ReLU}^2$ networks  in $H^1$ norm, which is of independent interest.
Secondly, to analyze the error caused by the penalty term,  \cite{Mller2021ErrorEF,muller2020deep} assumed some additional conditions, and we do not need these conditions to obtain the error inducing by the penalty. Lastly,  we provide the convergence rate analysis involving the statistical error caused by finite samples used in the SGD training,  while in \cite{Mller2021ErrorEF,muller2020deep} they  do  not consider the statistical error at all. Moreover, to bound the statistical error  we need to control the Rademacher complexity  of the non-Lipschitz composition of  gradient norm  and   $\mathrm{ReLU}^2$ network, such technique can be useful for bounding the statistical errors to other deep PDEs solvers.

\par The rest of this paper is organized as follows.
In Section \ref{section:Preliminaries} we describe briefly the model problem and recall some standard properties of PDEs and variational problems. We also introduce some notations in deep Ritz methods as preliminaries.
We devote Section \ref{section:error} to the detail analysis on the convergence rate of the deep Ritz method with penalty, where various error estimations are analyzed rigorously one by one and the main results on the convergence rate are presented.
Some concluding remarks and discussions are given in Section \ref{section:conclusion}.

\section{Preliminaries}\label{section:Preliminaries}
\par Consider the following elliptic equation with zero-boundary condition
\begin{equation}\label{eq:dirichlet}
	\left\{ \begin{aligned}
		-\Delta u+wu & =f &  & \text{in}\ \Omega          \\
		u            & =0 &  & \text{on}\ \partial\Omega, \\
	\end{aligned}\right.
\end{equation}
where $\Omega$ is a bounded open subset of $\mathbb{R}^{d}$, $d>1$,  $f\in L^2(\Omega)$ and $w\in L^{\infty}(\Omega)$. Moreover, we suppose the coefficient $w$ satisfies $w\geq c_{1}\geq0$ a.e.. Without loss of generality, we assume $\Omega = [0,1]^{d}$. Define the bilinear form
\begin{equation}\label{eq:bilinear}
	a:H^1(\Omega)\times H^1(\Omega)\to\mathbb{R}, \quad (u,v)\mapsto\int_{\Omega} \nabla u\cdot\nabla v+ wuv \ \mathrm{d}x,
\end{equation}
and the corresponding quadratic energy functional by
\begin{equation}\label{eq:energy}
	\mathcal{L}(u) = \frac{1}{2}a(u,u)-\langle f,u\rangle_{L^2(\Omega)}= \frac{1}{2}|u|_{H^{1}(\Omega)}^{2} + \frac{1}{2}\|u\|_{L^{2}(\Omega;{w})}^{2}- \langle f,u \rangle_{L^2(\Omega)}.
\end{equation}

\begin{lemma}\cite{Evans2010PartialDE}
	The unique weak solution $u^* \in H_0^{1}(\Omega)$ of \eqref{eq:dirichlet} is the unique minimizer of $\mathcal{L}(u)$ over $H_{0}^{1}(\Omega)$. Moreover, $u^*\in H^{2}(\Omega)$.
\end{lemma}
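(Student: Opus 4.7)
The plan is to decompose the statement into three parts: (i) the variational problem is equivalent to the weak formulation of \eqref{eq:dirichlet}; (ii) a unique minimizer exists in $H_0^1(\Omega)$; and (iii) this minimizer additionally lies in $H^2(\Omega)$.

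For part (i), I would compute the first variation of $\mathcal{L}$ along an arbitrary test direction $v\in H_0^1(\Omega)$:
\[
\mathcal{L}(u+tv) = \mathcal{L}(u) + t\bigl[a(u,v) - \langle f,v\rangle_{L^2(\Omega)}\bigr] + \tfrac{t^2}{2}a(v,v).
\]
Because the assumption $w\geq c_1\geq 0$ forces $a(v,v)\geq 0$, the functional $\mathcal{L}$ is convex, so $u^*$ is a global minimizer if and only if the linear term vanishes for every $v\in H_0^1(\Omega)$, that is, $a(u^*,v)=\langle f,v\rangle_{L^2(\Omega)}$ for all $v\in H_0^1(\Omega)$. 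This is precisely the weak formulation of \eqref{eq:dirichlet}.

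For part (ii), I would apply the Lax--Milgram theorem on $H_0^1(\Omega)$. Continuity of $a$ follows from Cauchy--Schwarz together with $w\in L^\infty(\Omega)$; coercivity uses the Poincar\'e inequality, which yields $\|u\|_{L^2}^2\leq C_P |u|_{H^1}^2$ for $u\in H_0^1(\Omega)$, whence $a(u,u)\geq |u|_{H^1}^2\geq (1+C_P)^{-1}\|u\|_{H^1(\Omega)}^2$. Continuity of the linear form $v\mapsto\langle f,v\rangle_{L^2(\Omega)}$ is immediate from $f\in L^2(\Omega)$. Lax--Milgram then delivers a unique $u^*\in H_0^1(\Omega)$ solving the weak formulation, which by part (i) is the unique minimizer of $\mathcal{L}$ over $H_0^1(\Omega)$.

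For part (iii), the $H^2$ regularity is the most delicate piece since $\Omega=[0,1]^d$ has corners. Interior $H^2$ regularity follows from the standard difference-quotient argument of \cite{Evans2010PartialDE}, applied to the equation $-\Delta u^* = f - wu^*$ with right-hand side in $L^2(\Omega)$. Boundary regularity is then handled by invoking the regularity theory on convex polyhedral domains (Grisvard's theorem), which yields $u^*\in H^2(\Omega)$ together with an a priori estimate of the form $\|u^*\|_{H^2(\Omega)}\lesssim \|f\|_{L^2(\Omega)}+\|wu^*\|_{L^2(\Omega)}$. The main obstacle is precisely the lack of smoothness of $\partial\Omega$; convexity of the cube is what rescues the argument and prevents the appearance of corner singularities that would otherwise cost half a derivative.
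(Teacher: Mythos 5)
The paper offers no proof of this lemma at all---it is stated with a bare citation to Evans---so there is nothing to compare line by line; your argument is the standard one and it is correct. Parts (i) and (ii) (first variation plus convexity to identify minimizers with weak solutions, then Lax--Milgram with Poincar\'e for existence and uniqueness) are exactly what the cited reference supplies. Your part (iii) is in fact \emph{more} careful than the paper's citation: Evans' global $H^2$ regularity theorem assumes a $C^2$ boundary, which $\Omega=[0,1]^d$ does not have, so the interior difference-quotient argument must indeed be supplemented by the convex-domain (Grisvard-type) regularity theory to reach $u^*\in H^2(\Omega)$ up to the corners. You correctly identify convexity of the cube as the property that prevents corner singularities and yields the a priori bound $\|u^*\|_{H^2(\Omega)}\lesssim\|f\|_{L^2(\Omega)}+\|wu^*\|_{L^2(\Omega)}$, with the second term controlled by $\|w\|_{L^\infty(\Omega)}$ and the Lax--Milgram stability estimate. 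In short, your proposal is a complete and correct proof of a statement the paper delegates to a reference that, strictly read, does not quite cover the stated domain.
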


\par Now we introduce the Robin approximation of \eqref{eq:dirichlet} with $\lambda>0$ as below
\begin{equation}\label{eq:robin}
	\left\{ \begin{aligned}
		-\Delta u+wu                                     & =f &  & \text{in}\ \Omega          \\
		\frac{1}{\lambda}\frac{\partial u}{\partial n}+u & =0 &  & \text{on}\ \partial\Omega. \\
	\end{aligned}\right.
\end{equation}
Similarly, we define the bilinear form
\begin{equation*}
	a_{\lambda}:H^1(\Omega)\times H^1(\Omega)\to\mathbb{R}, \quad (u,v)\mapsto a(u,v)+\lambda\int_{\partial\Omega}uv\ \mathrm{d}s,
\end{equation*}
and the corresponding quadratic energy functional with boundary penalty
\begin{equation}\label{eq:energy:penalty}
	\mathcal{L}_{\lambda}(u)= \frac{1}{2}a_{\lambda}(u,u)-\langle f,u \rangle_{L^2(\Omega)} = \mathcal{L}(u)+\frac{\lambda}{2}||Tu||_{L^{2}(\partial\Omega)}^{2},
\end{equation}
where $T$ means the trace operator.

\begin{lemma}\label{lemma:regularity}
	The unique weak solution $u_{\lambda}^* \in H^{1}(\Omega)$ of \eqref{eq:robin} is the unique minimizer of $\mathcal{L}_{\lambda}(u)$ over $H^{1}(\Omega)$. Moreover, $u_{\lambda}^{*}\in H^{2}(\Omega)$.
\end{lemma}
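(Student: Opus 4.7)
The plan is to establish the variational characterization via Lax-Milgram, then deduce higher regularity from elliptic theory on the cube. The new difficulty compared with the Dirichlet case is that minimization now takes place over the whole of $H^1(\Omega)$, so the usual Poincaré inequality on $H_0^1(\Omega)$ is unavailable and coercivity must come from the boundary penalty term.

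First I would verify continuity and coercivity of $a_\lambda$ on $H^1(\Omega)\times H^1(\Omega)$. Continuity is immediate from Cauchy-Schwarz, the bound $\|w\|_{L^\infty}<\infty$, and the trace theorem $\|Tu\|_{L^2(\partial\Omega)}\le C_{\mathrm{tr}}\|u\|_{H^1(\Omega)}$. For coercivity the key ingredient is the trace-based Poincaré estimate
\begin{equation*}
\|u\|_{L^2(\Omega)}^2 \le C_\Omega\bigl(\|\nabla u\|_{L^2(\Omega)}^2 + \|Tu\|_{L^2(\partial\Omega)}^2\bigr),\qquad u\in H^1(\Omega),
\end{equation*}
which on a Lipschitz domain is standard: a normalized sequence violating it would, by Rellich, converge strongly in $L^2$ to a constant whose trace vanishes, hence to zero, contradicting normalization. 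Combining this with $w\ge 0$ yields $a_\lambda(u,u)\ge c_\lambda\|u\|_{H^1(\Omega)}^2$ for some $c_\lambda>0$ depending on $\lambda$ and $\Omega$.

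Lax-Milgram then produces a unique $u_\lambda^*\in H^1(\Omega)$ satisfying $a_\lambda(u_\lambda^*,v)=\langle f,v\rangle_{L^2(\Omega)}$ for every $v\in H^1(\Omega)$, which is precisely the weak formulation of \eqref{eq:robin}. The equivalence with minimization of $\mathcal{L}_\lambda$ follows from the standard identity
\begin{equation*}
\mathcal{L}_\lambda(u_\lambda^*+v)-\mathcal{L}_\lambda(u_\lambda^*) = \bigl(a_\lambda(u_\lambda^*,v)-\langle f,v\rangle_{L^2(\Omega)}\bigr) + \tfrac12 a_\lambda(v,v) = \tfrac12 a_\lambda(v,v)\ge 0,
\end{equation*}
with equality only when $v=0$ by coercivity, so $u_\lambda^*$ is the unique minimizer.

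The main obstacle is the $H^2$ regularity, because $\Omega=[0,1]^d$ is only a Lipschitz polyhedron and classical $C^2$-domain elliptic regularity does not apply verbatim. However the cube is convex, and the results of Grisvard (and their Robin-boundary extensions) give $H^2$ regularity for Poisson-type problems with $L^2$ right-hand side and bounded coefficients on convex polyhedral domains. Alternatively, one may extend $u_\lambda^*$ by successive reflection across each face to obtain a function satisfying an elliptic equation with $L^2$ data in a neighborhood of each interior face; interior $H^2$ regularity combined with convexity at the edges and corners then yields $u_\lambda^*\in H^2(\Omega)$. I would cite Grisvard's theorem for the cleanest presentation.
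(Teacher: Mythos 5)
Your first half — coercivity of $a_\lambda$ from the trace-based Poincar\'e inequality, Lax--Milgram, and the quadratic expansion $\mathcal{L}_\lambda(u_\lambda^*+v)-\mathcal{L}_\lambda(u_\lambda^*)=\tfrac12 a_\lambda(v,v)$ — is exactly the paper's argument, so there is nothing to compare there. For the $H^2$ regularity you take a genuinely different route. The paper does not invoke convex-domain regularity directly for the Robin problem; instead it decomposes $u_\lambda^*=u_0+\lambda^{-1}u_1$, where $u_0$ solves the Dirichlet problem with data $f$ and $u_1$ solves a homogeneous Robin problem with boundary data $-\partial u_0/\partial n$, then controls $\|u_1\|_{H^2(\Omega)}\le C\lambda\|\partial u_0/\partial n\|_{H^{1/2}(\partial\Omega)}$ by showing via the Dirichlet-to-Neumann map that $(\lambda^{-1}\widetilde{T}+I)^{-1}$ is bounded uniformly in $\lambda$. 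The payoff of that extra work is the estimate $\|u_\lambda^*\|_{H^2(\Omega)}\le C\|f\|_{L^2(\Omega)}$ with $C$ \emph{independent of} $\lambda$ — which is what silently justifies the hypothesis $\|u_\lambda^*\|_{H^2(\Omega)}\le c_2$ in Theorem~\ref{thm:apperr} even as $\lambda\to\infty$ in the final rate. Your Grisvard-based argument proves the lemma as literally stated (only qualitative membership in $H^2(\Omega)$ is claimed), but a generic elliptic estimate for the Robin problem would carry a $\lambda$-dependent constant, so you lose the uniformity the paper needs downstream. Two smaller caveats: your reflection sketch does not go through as written, since neither even nor odd reflection across a face preserves the Robin condition $\lambda^{-1}\partial_n u+u=0$ (it works for pure Neumann or Dirichlet pieces, which is another reason the paper's decomposition into those two subproblems is convenient); and your observation that the cube is only Lipschitz is fair — the paper's own regularity lemmas assume a smooth boundary, so on this point you are, if anything, more careful than the source.
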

\begin{proof}
	See Appendix \ref{app:lemma:regularity}.
\end{proof}
From the perspective of infinite dimensional optimization, $\mathcal{L}_{\lambda}$ can be seen as the penalized version of $\mathcal{L}$. The following lemma provides the relationship between the minimizers of them.
\begin{lemma}\label{lemma:lambda_convergence}
	The minimizer $u_{\lambda}^{*}$ of the penalized problem \eqref{eq:energy:penalty} converges to $u^{*}$ in $H^1(\Omega)$ as $\lambda\rightarrow\infty$.
\end{lemma}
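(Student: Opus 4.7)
The plan is to use the standard penalty-method convergence argument: obtain a uniform $H^1$ bound on the family $\{u_\lambda^*\}_{\lambda \geq 1}$, extract a weakly convergent subsequence, identify the limit as $u^*$, and upgrade to strong convergence. First I would exploit the minimizing property of $u_\lambda^*$ together with the fact that $u^* \in H_0^1(\Omega)$ has vanishing trace: since $\mathcal{L}_\lambda(u^*) = \mathcal{L}(u^*)$, optimality yields
\begin{equation*}
\mathcal{L}(u_\lambda^*) + \frac{\lambda}{2}\|Tu_\lambda^*\|_{L^2(\partial\Omega)}^2 \leq \mathcal{L}(u^*).
\end{equation*}
Combined with the trace-enhanced Poincaré inequality $\|u\|_{L^2(\Omega)} \leq C\bigl(|u|_{H^1(\Omega)} + \|Tu\|_{L^2(\partial\Omega)}\bigr)$ and Young's inequality on the linear term $\langle f, u\rangle_{L^2}$, this gives, for all $\lambda \geq 1$, a uniform bound on $\|u_\lambda^*\|_{H^1(\Omega)}$ and the estimate $\|Tu_\lambda^*\|_{L^2(\partial\Omega)}^2 \leq C/\lambda$.

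Next I would extract, along any sequence $\lambda_n \to \infty$, a subsequence (not relabeled) with $u_{\lambda_n}^* \rightharpoonup \tilde{u}$ in $H^1(\Omega)$. Continuity (hence weak continuity) of the trace operator $T:H^1(\Omega)\to L^2(\partial\Omega)$ together with $\|Tu_{\lambda_n}^*\|_{L^2(\partial\Omega)} \to 0$ forces $T\tilde{u} = 0$, so $\tilde{u}\in H_0^1(\Omega)$. To identify $\tilde{u} = u^*$, I would compare energies: from $\mathcal{L}(u_{\lambda_n}^*) \leq \mathcal{L}_{\lambda_n}(u_{\lambda_n}^*) \leq \mathcal{L}(u^*)$ and weak lower semicontinuity of $\mathcal{L}$ (it is convex and continuous on $H^1$), I get $\mathcal{L}(\tilde{u}) \leq \liminf \mathcal{L}(u_{\lambda_n}^*) \leq \mathcal{L}(u^*)$; since $\tilde{u}\in H_0^1(\Omega)$ and $u^*$ is the unique minimizer over $H_0^1(\Omega)$, we conclude $\tilde{u} = u^*$ and $\mathcal{L}(u_{\lambda_n}^*) \to \mathcal{L}(u^*)$.

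To upgrade to strong $H^1$ convergence, I would observe that $u_{\lambda_n}^* \to u^*$ strongly in $L^2(\Omega)$ by the Rellich--Kondrachov theorem, so $\|u_{\lambda_n}^*\|_{L^2(\Omega;w)}^2 \to \|u^*\|_{L^2(\Omega;w)}^2$ (using $w\in L^\infty$) and $\langle f, u_{\lambda_n}^*\rangle \to \langle f, u^*\rangle$. Subtracting from $\mathcal{L}(u_{\lambda_n}^*) \to \mathcal{L}(u^*)$ forces $|u_{\lambda_n}^*|_{H^1(\Omega)}^2 \to |u^*|_{H^1(\Omega)}^2$. Combined with weak convergence $\nabla u_{\lambda_n}^* \rightharpoonup \nabla u^*$ in $L^2(\Omega;\mathbb{R}^d)$, the Hilbert-space identity $\|\nabla u_{\lambda_n}^* - \nabla u^*\|_{L^2}^2 = \|\nabla u_{\lambda_n}^*\|_{L^2}^2 - 2\langle \nabla u_{\lambda_n}^*, \nabla u^*\rangle + \|\nabla u^*\|_{L^2}^2$ yields strong convergence of gradients, hence $u_{\lambda_n}^* \to u^*$ in $H^1(\Omega)$. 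A standard subsequence-of-subsequence argument then promotes this to convergence of the full family as $\lambda\to\infty$.

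The main obstacle I foresee is the uniform $H^1$ bound when the coefficient $w$ is allowed to vanish ($c_1 = 0$): coercivity of $a(\cdot,\cdot)$ alone is insufficient, and one must carefully combine the penalty term with a trace-type Poincaré inequality to control $\|u_\lambda^*\|_{L^2}$ via $|u_\lambda^*|_{H^1}$ and $\|Tu_\lambda^*\|_{L^2(\partial\Omega)}$ simultaneously. The remaining steps are fairly routine applications of weak compactness, weak lower semicontinuity, and Rellich compactness.
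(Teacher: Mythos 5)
Your argument is correct, but it takes a genuinely different route from the paper, which does not prove this lemma at all: it simply invokes Proposition 2.1 of Maury (2009) (``Numerical analysis of a finite element/volume penalty method''), and the only quantitative penalty analysis the paper actually carries out is Theorem \ref{thm:penaltyerr}, where the stronger bound $\|u_\lambda^*-u^*\|_{H^1(\Omega)}\le C\lambda^{-1}$ is obtained by a direct comparison argument: one introduces $R_\lambda(v)=\frac12 a(u^*-v,u^*-v)+\frac{\lambda}{2}\int_{\partial\Omega}\bigl(-\frac{1}{\lambda}\frac{\partial u^*}{\partial n}-v\bigr)^2\,ds$, shows $R_\lambda=\mathcal{L}_\lambda+\mathrm{const}$, and tests with $w=\frac{1}{\lambda}\varphi+u^*$ for a lifting $\varphi$ of $-\frac{\partial u^*}{\partial n}$, so that coercivity gives the rate at once; the lemma then follows as a corollary. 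Your soft-analysis proof (uniform bound via the trace-enhanced Poincar\'e inequality, weak compactness, identification of the limit through weak lower semicontinuity, and upgrade to strong convergence via Rellich plus convergence of norms) is complete and sound — in particular you correctly flag that when $c_1=0$ the penalty term must be combined with the generalized Poincar\'e inequality to close the a priori estimate, and the chain $\mathcal{L}(u_{\lambda}^*)\le\mathcal{L}_{\lambda}(u_{\lambda}^*)\le\mathcal{L}_{\lambda}(u^*)=\mathcal{L}(u^*)$ is exactly the right starting point. What your approach buys is generality: it needs no $H^2$ regularity of $u^*$, no normal-derivative lifting, and no smoothness of $\partial\Omega$ beyond what the trace and Rellich theorems require; what it gives up is the convergence rate, which the paper's quantitative argument (and Maury's Proposition 2.3) delivers and which is what the final convergence theorem actually uses.
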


\begin{proof}
	This result follows from Proposition 2.1 in \cite{Maury2009Numerical} directly.
\end{proof}

\par The deep Ritz method can be divided into three steps. First, one use deep neural network to approximate the trial function. A deep neural network $u_{\phi}:\mathbb{R}\rightarrow\mathbb{R}^{N_{L}}$ is defined by
\begin{equation*}
	\begin{aligned}
		&u_{0}(\boldsymbol{x})=\boldsymbol{x}, \\
		&u_{\ell}(\boldsymbol{x})=\sigma_{\ell}(A_{\ell}u_{\ell-1}+b_{\ell}), \quad \ell=1,2,\ldots,L-1, \\
		&u=u_{L}(\boldsymbol{x})=A_{L}u_{L-1}+b_{L},
	\end{aligned}
\end{equation*}
where $A_{\ell}\in\mathbb{R}^{N_{\ell}\times N_{\ell-1}}$, $b_{\ell}\in\mathbb{R}^{N_{\ell}}$ and the activation functions $\sigma_{\ell}$ may be different for different $\ell$. The depth $\mathcal{D}$ and the width $\mathcal{W}$ of neural networks $u_{\phi}$ are defined as
\begin{equation*}
	\mathcal{D}=L, \ \mathcal{W}=\max\{N_{\ell}:\ell=1,2,\ldots,L\}.
\end{equation*}
$\sum_{\ell=1}^{L}N_{\ell}$ is called the number of units of $u_{\phi}$, and $\phi=\{A_{\ell},b_{\ell}\}_{\ell=1}^{N}$ is called the free parameters of the network.
\begin{definition}
	The class $\mathcal{N}_{\mathcal{D},\mathcal{W},\mathcal{B}}^{\alpha}$ is the collection of neural networks $u_{\phi}$ which satisfies that
	\begin{itemize}
		\item[(i)] depth and width are $\mathcal{D}$ and $\mathcal{W}$, respectively;
		\item[(ii)] the function values $u_{\phi}(\boldsymbol{x})$ and the squared norm of $\nabla u_{\phi}(\boldsymbol{x})$ are bounded by $\mathcal{B}$;
		\item[(iii)] activation functions are given by $\mathrm{ReLU}^{\alpha}$, where $\alpha$ is the (multi-)index.
	\end{itemize}
\end{definition}
For example, $\mathcal{N}_{\mathcal{D},\mathcal{W},\mathcal{B}}^{2}$ is the class of networks with activation functions as $\mathrm{ReLU}^2$, and $\mathcal{N}_{\mathcal{D},\mathcal{W},\mathcal{B}}^{1,2}$ is that with activation functions as $\mathrm{ReLU}^1$ and $\mathrm{ReLU}^2$. We may simply use $\mathcal{N}^\alpha$ if there is no confusion.

\par Second, one use Monte Carlo method to discretize the energy functional. We rewrite \eqref{eq:energy:penalty} as
\begin{equation}\label{eq:energy:penalty2}
	\begin{aligned}
		\mathcal{L}_{\lambda}(u) = & |\Omega|\mathop{\mathbb{E}}_{X\sim U(\Omega)}\left[ \frac{\|\nabla u(X)\|_{2}^{2}}{2}+\frac{w(X)u^{2}(X)}{2}-u(X)f(X) \right] \\
		&+\frac{\lambda}{2}|\partial\Omega|\mathop{\mathbb{E}}_{Y\sim U(\partial\Omega)}\left[ Tu^{2}(Y) \right],
	\end{aligned}
\end{equation}
where $U(\Omega)$, $U(\partial\Omega)$ are the uniform distribution on $\Omega$ and $\partial\Omega$.
We now introduce the discrete version of \eqref{eq:energy:penalty} and replace $u$ by neural network $u_{\phi}$, as follows
\begin{equation}\label{eq:energy:penalty:discrete}
	\begin{aligned}
		\widehat{\mathcal{L}}_{\lambda}(u_{\phi}) = & \frac{|\Omega|}{N}\sum_{i=1}^{N}\left[ \frac{\|\nabla u_{\phi}(X_{i})\|_{2}^{2}}{2}+\frac{w(X_{i})u_{\phi}^{2}(X_{i})}{2}-u_{\phi}(X_{i})f(X_{i}) \right] \\
		& +\frac{\lambda}{2}\frac{|\partial\Omega|}{M}\sum_{j=1}^{M}\left[ Tu_{\phi}^{2}(Y_{j}) \right].
	\end{aligned}
\end{equation}
We denote the minimizer of \eqref{eq:energy:penalty:discrete} over $\mathcal{N}^{2}$ as $\widehat{u}_{\phi}$, that is
\begin{equation}\label{eq:u_hat_star}
	\widehat{u}_{\phi}=\mathop{\arg\min}_{u_{\phi}\in \mathcal{N}^{2}}\widehat{\mathcal{L}}_{\lambda}(u_{\phi}),
\end{equation}
where $\{X_{i}\}_{i=1}^{N}\sim U(\Omega)$ i.i.d. and $\{Y_{j}\}_{j=1}^{M}\sim U(\partial\Omega)$ i.i.d..
\par  Finally, we choose an algorithm for solving the optimization problem, and denote $u_{\phi_{\mathcal{A}}}$ as the solution by optimizer $\mathcal{A}$.

\section{Error Analysis}\label{section:error}
\par In this section we prove the convergence rate analysis for DRM with deep $\mathrm{ReLU}^{2}$ networks. The following Theorem plays an important role by decoupling the total errors into four types of errors.
\begin{theorem}\label{thm:errdec}
	\begin{equation*}
		\begin{aligned}
			& \| u_{\phi_\mathcal{A}}-u^{*} \|_{H^{1}(\Omega)}^{2} \\
			\leq & \frac{4}{c_{1}\wedge 1}\left\{\underbrace{\inf_{\bar{u}\in\mathcal{N}^2}\left[\frac{\|w\|_{L^{\infty}(\Omega)} \vee 1}{2} \|\bar{u}-u_{\lambda}^{*}\|_{H^{1}(\Omega)}^{2}+\frac{\lambda}{2}\|T\bar{u}-Tu_{\lambda}^{*} \|_{L^2({\partial\Omega})}^{2}\right]}_{\mathcal{E}_{app}}\right.  \\
			& \left.+\underbrace{2 \sup _{u \in \mathcal{N}^2}\left|\mathcal{L}_{\lambda}(u)-\widehat{\mathcal{L}}_{\lambda}(u)\right|}_{\mathcal{E}_{sta}}+\underbrace{\left[\widehat{\mathcal{L}}_{\lambda}\left(u_{\phi_{\mathcal{A}}}\right)-\widehat{\mathcal{L}}_{\lambda}\left(\widehat{u}_{\phi}\right)\right]}_{\mathcal{E}_{opt}}\right\} + 2 \underbrace{\| u_{\lambda}^{*}-u^{*}\|_{H^{1}(\Omega)}^{2}}_{\mathcal{E}_{pen}}
		\end{aligned}
	\end{equation*}
\end{theorem}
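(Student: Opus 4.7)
The plan is to split $\|u_{\phi_{\mathcal{A}}}-u^{*}\|_{H^{1}(\Omega)}^{2}$ by the triangle inequality into the penalty gap $\|u_{\lambda}^{*}-u^{*}\|_{H^{1}(\Omega)}^{2}$ (which is exactly $\mathcal{E}_{pen}$, losing a factor of $2$) and the discrepancy $\|u_{\phi_{\mathcal{A}}}-u_{\lambda}^{*}\|_{H^{1}(\Omega)}^{2}$ between the trained network and the Robin minimizer. The whole point is to bound this second piece by values of the functional $\mathcal{L}_{\lambda}$, then introduce and eliminate the empirical functional $\widehat{\mathcal{L}}_{\lambda}$ by adding and subtracting terms.

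First I would establish strong convexity of $\mathcal{L}_{\lambda}$ around $u_{\lambda}^{*}$. Because $u_{\lambda}^{*}$ is the weak solution of the Robin problem, $a_{\lambda}(u_{\lambda}^{*},v)=\langle f,v\rangle_{L^{2}(\Omega)}$ for every test function in $H^{1}(\Omega)$, so a direct expansion gives the identity $\mathcal{L}_{\lambda}(u)-\mathcal{L}_{\lambda}(u_{\lambda}^{*})=\tfrac{1}{2}a_{\lambda}(u-u_{\lambda}^{*},u-u_{\lambda}^{*})$. Using $w\geq c_{1}$ and dropping the nonnegative boundary term yields the lower bound $a_{\lambda}(v,v)\geq(c_{1}\wedge 1)\|v\|_{H^{1}(\Omega)}^{2}$, whence
\begin{equation*}
\|u_{\phi_{\mathcal{A}}}-u_{\lambda}^{*}\|_{H^{1}(\Omega)}^{2}\leq \frac{2}{c_{1}\wedge 1}\bigl[\mathcal{L}_{\lambda}(u_{\phi_{\mathcal{A}}})-\mathcal{L}_{\lambda}(u_{\lambda}^{*})\bigr].
\end{equation*}
Conversely, bounding the diffusion, reaction, and boundary quadratic terms from above produces $a_{\lambda}(v,v)\leq(\|w\|_{L^{\infty}(\Omega)}\vee 1)\|v\|_{H^{1}(\Omega)}^{2}+\lambda\|Tv\|_{L^{2}(\partial\Omega)}^{2}$, which is precisely the expression inside the infimum defining $\mathcal{E}_{app}$.

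Next I would insert a telescoping chain. For an arbitrary $\bar{u}\in\mathcal{N}^{2}$, write
\begin{equation*}
\mathcal{L}_{\lambda}(u_{\phi_{\mathcal{A}}})-\mathcal{L}_{\lambda}(u_{\lambda}^{*})=\bigl[\mathcal{L}_{\lambda}(u_{\phi_{\mathcal{A}}})-\widehat{\mathcal{L}}_{\lambda}(u_{\phi_{\mathcal{A}}})\bigr]+\bigl[\widehat{\mathcal{L}}_{\lambda}(u_{\phi_{\mathcal{A}}})-\widehat{\mathcal{L}}_{\lambda}(\widehat{u}_{\phi})\bigr]+\bigl[\widehat{\mathcal{L}}_{\lambda}(\widehat{u}_{\phi})-\widehat{\mathcal{L}}_{\lambda}(\bar{u})\bigr]+\bigl[\widehat{\mathcal{L}}_{\lambda}(\bar{u})-\mathcal{L}_{\lambda}(\bar{u})\bigr]+\bigl[\mathcal{L}_{\lambda}(\bar{u})-\mathcal{L}_{\lambda}(u_{\lambda}^{*})\bigr].
\end{equation*}
The middle bracket is $\mathcal{E}_{opt}$; the third bracket is $\leq 0$ because $\widehat{u}_{\phi}$ minimizes $\widehat{\mathcal{L}}_{\lambda}$ over $\mathcal{N}^{2}$; the first and fourth brackets are each at most $\sup_{u\in\mathcal{N}^{2}}|\mathcal{L}_{\lambda}(u)-\widehat{\mathcal{L}}_{\lambda}(u)|$, together contributing $\mathcal{E}_{sta}$; and the last bracket is handled by the upper quadratic bound above. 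Taking the infimum over $\bar{u}\in\mathcal{N}^{2}$ converts the last piece into $\mathcal{E}_{app}$, and assembling everything with the triangle inequality gives the stated inequality.

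No step looks genuinely hard. The only delicate point is being careful with the two one-sided bounds on $a_{\lambda}$ so that the constant $c_{1}\wedge 1$ appears in the denominator while $\|w\|_{L^{\infty}(\Omega)}\vee 1$ appears in the numerator, and ensuring that the minimization of $\bar{u}$ commutes properly with the chain (which it does because $\bar{u}$ appears only in the last two brackets and the supremum bound on the fourth is $\bar{u}$-independent).
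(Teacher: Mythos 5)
Your proposal is correct and follows essentially the same route as the paper: triangle inequality plus $(a+b)^2\leq 2a^2+2b^2$ to isolate $\mathcal{E}_{pen}$, the two-sided quadratic bounds on $\mathcal{L}_{\lambda}(u)-\mathcal{L}_{\lambda}(u_{\lambda}^{*})$ via the Euler--Lagrange identity (which the paper writes out by expanding $\mathcal{L}_{\lambda}(u_{\lambda}^{*}+v)$), and the identical five-term telescoping chain with the infimum over $\bar{u}$. No gaps.
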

\begin{proof}
	\par Given $u_{\phi_\mathcal{A}}\in H^{1}(\Omega)$, we can decompose its distance to the weak solution of \eqref{eq:dirichlet} using triangle inequality
	\begin{equation}\label{eq:errdec}
		\| u_{\phi_\mathcal{A}}-u^{*} \|_{H^{1}(\Omega)}\leq \| u_{\phi_\mathcal{A}}-u_{\lambda}^{*} \|_{H^{1}(\Omega)}+\| u_{\lambda}^{*}-u^{*} \|_{H^{1}(\Omega)}.
	\end{equation}
	First, we decouple the first term into three parts. For any $\bar{u}\in\mathcal{N}^2$, we have
	\begin{equation*}
		\begin{aligned}
			& \mathcal{L}_{\lambda}\left(u_{\phi_{\mathcal{A}}}\right)-\mathcal{L}_{\lambda}\left(u_{\lambda}^{*}\right)\\
			& =\mathcal{L}_{\lambda}\left(u_{\phi_{\mathcal{A}}}\right)-\widehat{\mathcal{L}}_{\lambda}\left(u_{\phi_{\mathcal{A}}}\right) +\widehat{\mathcal{L}}_{\lambda}\left(u_{\phi_{\mathcal{A}}}\right)-\widehat{\mathcal{L}}_{\lambda}\left(\widehat{u}_{\phi}\right) +\widehat{\mathcal{L}}_{\lambda}\left(\widehat{u}_{\phi}\right)-\widehat{\mathcal{L}}_{\lambda}\left(\bar{u}\right) \\
			& \quad+\widehat{\mathcal{L}}_{\lambda}\left(\bar{u}\right)-\mathcal{L}_{\lambda}\left(\bar{u}\right)+\mathcal{L}_{\lambda}\left(\bar{u}\right)-\mathcal{L}_{\lambda}\left(u_{\lambda}^{*}\right)\\
			& \leq \left[\mathcal{L}_{\lambda}\left(\bar{u}\right)-\mathcal{L}_{\lambda}\left(u_{\lambda}^{*}\right)\right]+2 \sup _{u \in \mathcal{N}^2}\left|\mathcal{L}_{\lambda}(u)-\widehat{\mathcal{L}}_{\lambda}(u)\right|+\left[\widehat{\mathcal{L}}_{\lambda}\left(u_{\phi_{\mathcal{A}}}\right)-\widehat{\mathcal{L}}_{\lambda}\left(\widehat{u}_{\phi}\right)\right].
		\end{aligned}
	\end{equation*}
	Since $\bar{u}$ can be any element in $\mathcal{N}^2$, we take the infimum of $\bar{u}$
	\begin{equation}\label{eq:errdec1}
		\begin{aligned}
			\mathcal{L}_{\lambda}\left(u_{\phi_{\mathcal{A}}}\right)-\mathcal{L}_{\lambda}\left(u_{\lambda}^{*}\right)
			\leq & \inf_{\bar{u}\in\mathcal{N}^2}\left[\mathcal{L}_{\lambda}\left(\bar{u}\right)-\mathcal{L}_{\lambda}\left(u^{*}\right)\right]+2 \sup _{u \in \mathcal{N}^2}\left|\mathcal{L}_{\lambda}(u)-\widehat{\mathcal{L}}_{\lambda}(u)\right| \\ &+\left[\widehat{\mathcal{L}}_{\lambda}\left(u_{\phi_{\mathcal{A}}}\right)-\widehat{\mathcal{L}}_{\lambda}\left(\widehat{u}_{\phi}\right)\right].
		\end{aligned}
	\end{equation}
	For any $u\in\mathcal{N}$, set $v=u-u_{\lambda}^{*}$, then
	\begin{equation*}
		\begin{aligned}
			\mathcal{L}_{\lambda}(u)= & \mathcal{L}_{\lambda}(u_{\lambda}^{*}+v)\\
			= & \frac{1}{2}\langle \nabla(u_{\lambda}^{*}+v),\nabla(u_{\lambda}^{*}+v) \rangle_{L^2({\Omega})}+\frac{1}{2}\langle u_{\lambda}^{*}+v,u_{\lambda}^{*}+v \rangle_{L^2({\Omega};w)}-\langle u_{\lambda}^{*}+v,f \rangle_{L^2({\Omega})} \\
			& +\frac{\lambda}{2}\langle T(u_{\lambda}^{*}+v),T(u_{\lambda}^{*}+v) \rangle_{L^2({\partial\Omega})} \\
			= & \mathcal{L}_{\lambda}(u_{\lambda}^{*})+\langle \nabla u_{\lambda}^{*},\nabla v\rangle_{L^2({\Omega})}+\langle u_{\lambda}^{*},v \rangle_{L^2({\Omega;w})}-\langle v,f \rangle_{L^2({\Omega})}+\lambda\langle Tu_{\lambda}^{*},Tv \rangle_{L^2({\partial\Omega})} \\
			& +\frac{1}{2}\langle \nabla v,\nabla v\rangle_{L^2({\Omega})}+\frac{1}{2}\langle v,v \rangle_{L^2({\Omega};w)} +\frac{\lambda}{2}\langle Tv,Tv \rangle_{L^2({\partial\Omega})}                                  \\
			= & \mathcal{L}_{\lambda}(u_{\lambda}^{*})+\frac{1}{2}\langle \nabla v,\nabla v\rangle_{L^2({\Omega})}+\frac{1}{2}\langle v,v \rangle_{L^2({\Omega};w)}+\frac{\lambda}{2}\langle Tv,Tv \rangle_{L^2({\partial\Omega})},
		\end{aligned}
	\end{equation*}
	where the last equality comes from the fact that $u_{\lambda}^{*}$ is the minimizer of \eqref{eq:energy:penalty}. Therefore
	\begin{equation*}
		\mathcal{L}_{\lambda}(u)-\mathcal{L}_{\lambda}(u_{\lambda}^{*})=\frac{1}{2}\langle \nabla v,\nabla v\rangle_{L^2({\Omega})}+\frac{1}{2}\langle v,v \rangle_{L^2({\Omega};w)}+\frac{\lambda}{2}\langle Tv,Tv \rangle_{L^2({\partial\Omega})},
	\end{equation*}
	that is
	\begin{equation}\label{eq:errdec2}
		\begin{aligned}
			\frac{c_{1}\wedge 1}{2} \|u-u_{\lambda}^{*}\|_{H^{1}(\Omega)}^{2} & \leq\mathcal{L}_{\lambda}(u)-\mathcal{L}_{\lambda}(u_{\lambda}^{*})-\frac{\lambda}{2}\|Tu-Tu_{\lambda}^{*} \|_{L^2({\partial\Omega})}^{2} \\
			& \leq\frac{\|w\|_{L^{\infty}(\Omega)}\vee1}{2} \|u-u_{\lambda}^{*}\|_{H^{1}(\Omega)}^{2}.
		\end{aligned}
	\end{equation}
	Combining \eqref{eq:errdec1} and \eqref{eq:errdec2}, we obtain
	\begin{equation}\label{eq:errdec3}
		\begin{aligned}
			& \|u_{\phi_{\mathcal{A}}}-u_{\lambda}^{*}\|_{H^{1}(\Omega)}^{2}                                                                                                                                                                                                                    \\
			\leq & \frac{2}{c_{1}\wedge 1}\left\{\mathcal{L}_{\lambda}(u_{\phi_{\mathcal{A}}})-\mathcal{L}_{\lambda}(u_{\lambda}^{*})-\frac{\lambda}{2}\|Tu_{\phi_{\mathcal{A}}}-Tu_{\lambda}^{*} \|_{L^2({\partial\Omega})}^{2}\right\}                                                             \\
			\leq & \frac{2}{c_{1}\wedge 1}\left\{\inf_{\bar{u}\in\mathcal{N}^2}\left[\mathcal{L}_{\lambda}\left(\bar{u}\right)-\mathcal{L}_{\lambda}\left(u_{\lambda}^{*}\right)\right]+2 \sup _{u \in \mathcal{N}^2}\left|\mathcal{L}_{\lambda}(u)-\widehat{\mathcal{L}}_{\lambda}(u)\right|\right. \\ &\left.+\left[\widehat{\mathcal{L}}_{\lambda}\left(u_{\phi_{\mathcal{A}}}\right)-\widehat{\mathcal{L}}_{\lambda}\left(\widehat{u}_{\phi}\right)\right]\right\} \\
			\leq & \frac{2}{c_{1}\wedge 1}\left\{\inf_{\bar{u}\in\mathcal{N}^2}\left[\frac{\|w\|_{L^{\infty}(\Omega)} \vee 1}{2} \|\bar{u}-u_{\lambda}^{*}\|_{H^{1}(\Omega)}^{2}+\frac{\lambda}{2}\|T\bar{u}-Tu_{\lambda}^{*} \|_{L^2({\partial\Omega})}^{2}\right]\right.                           \\
			& \left.+2 \sup _{u \in \mathcal{N}^2}\left|\mathcal{L}_{\lambda}(u)-\widehat{\mathcal{L}}_{\lambda}(u)\right|+\left[\widehat{\mathcal{L}}_{\lambda}\left(u_{\phi_{\mathcal{A}}}\right)-\widehat{\mathcal{L}}_{\lambda}\left(\widehat{u}_{\phi}\right)\right]\right\}.
		\end{aligned}
	\end{equation}
	Substituting \eqref{eq:errdec3} into \eqref{eq:errdec}, it is evident to see that the theorem holds.
\end{proof}
\par The approximation error $\mathcal{E}_{app}$ describes the expressive power of the $\mathrm{ReLU}^2$ networks $\mathcal{N}^{2}$ in $H^{1}$ norm, which corresponds to the approximation error in FEM known as the C\'{e}a's lemma \cite{ciarlet2002finite}. The statistical error $\mathcal{E}_{sta}$ is caused by the Monte Carlo discritization of $L_{\lambda}(\cdot)$ defined in \eqref{eq:energy:penalty} with $\widehat{L}_{\lambda}(\cdot)$ in \eqref{eq:energy:penalty:discrete}. While, the optimization error $\mathcal{E}_{opt}$ indicates the performance of the solver $\mathcal{A}$ we utilized. In contrast, this error is corresponding to the error of solving linear systems in FEM. In this paper we consider the scenario of perfect training with $\mathcal{E}_{opt}$ = 0. The error $\mathcal{E}_{pen}$ caused by the boundary penalty is the distance between the minimizer of the energy with zero boundary condition and the minimizer of the energy with penalty.

\subsection{Approximation error}
\begin{theorem}\label{thm:apperr}
	Assume $\|u_{\lambda}^{*}\|_{H^{2}(\Omega)}\leq c_{2}$, then there exist a $\mathrm{ReLU}^2$ network $\bar{u}_{\bar{\phi}}\in\mathcal{N}^{2}$ with depth and width satisfying
	\begin{equation*}
		\mathcal{D}\leq \lceil\log_{2} d\rceil+3, \quad \mathcal{W}\leq 4d \left\lceil \frac{Cc_{2}}{\varepsilon}-4 \right\rceil^{d}
	\end{equation*}
	such that
	\begin{equation*}
		\begin{aligned}
			\mathcal{E}_{app} & =\inf_{\bar{u}\in\mathcal{N}^2}\left[\frac{\|w\|_{L^{\infty}(\Omega)} \vee 1}{2}\|\bar{u}-u_{\lambda}^{*}\|_{H^{1}(\Omega)}^{2}+\frac{\lambda}{2}\|T\bar{u}-Tu_{\lambda}^{*} \|_{L^2({\partial\Omega})}^{2}\right] \\
			& \leq \left( \frac{\|w\|_{L^{\infty}(\Omega)} \vee 1}{2}+\frac{\lambda C_{d}}{2} \right) \varepsilon ^{2}
		\end{aligned}
	\end{equation*}
	where $C$ is a genetic constant and $C_{d}>0$ is a constant depending only on $\Omega$.
\end{theorem}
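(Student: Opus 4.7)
The plan is to build the approximant as a finite-element-style quasi-interpolant, namely a locally affine Taylor polynomial pasted together by a smooth tensor-product partition of unity, and then realize this quasi-interpolant exactly as a $\mathrm{ReLU}^2$ network using polarization. The $H^1$ error will come from a Bramble--Hilbert / Deny--Lions estimate, and the $L^2(\partial\Omega)$ error on the trace will follow for free from the continuity of the trace operator $T: H^1(\Omega)\to L^2(\partial\Omega)$, so no additional boundary argument is needed.

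First, I would fix $N = \lceil Cc_2/\varepsilon - 4 \rceil$ and mesh size $h = 1/(N+3)$, and overlay $[0,1]^d$ with a uniform tensor grid of nodes $\{x_\alpha\}_{\alpha}$. At each node define an averaged Taylor polynomial $P_\alpha$ of degree $1$, which is well defined for $u_\lambda^* \in H^2(\Omega)$ via mollification. Next, construct a univariate quadratic B-spline partition of unity $\{\phi_k\}$ on $[0,1]$: each $\phi_k$ is piecewise quadratic, supported in a window of width $\mathcal{O}(h)$, and expressible as a linear combination of at most four $\mathrm{ReLU}^2$ units by virtue of the identity $z^2 = \mathrm{ReLU}^2(z) + \mathrm{ReLU}^2(-z)$. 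Form the tensor-product bump $\Phi_\alpha(x) = \prod_{i=1}^d \phi_{\alpha_i}(x_i)$ and realize this $d$-fold product by a balanced binary tree of pairwise multiplications, each single multiplication $ab = \tfrac14[(a+b)^2 - (a-b)^2]$ being implemented by one $\mathrm{ReLU}^2$ layer via polarization. This tree has depth $\lceil \log_2 d \rceil$.

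Second, I set $\bar{u}_{\bar\phi}(x) = \sum_\alpha \Phi_\alpha(x)\,P_\alpha(x)$. One further polarization layer realizes each product $\Phi_\alpha \cdot P_\alpha$ of the already-constructed partition of unity with the affine polynomial $P_\alpha$; together with one initial layer producing the univariate B-spline units, one layer producing the affine polynomials in parallel, and a final affine layer summing over $\alpha$, the total depth fits inside $\lceil \log_2 d \rceil + 3$. The width is dominated by the number of grid points, $(N+3)^d$, with a constant overhead of $4d$ neurons per cell to assemble the $d$ univariate spline factors, giving exactly the claimed bound $\mathcal{W} \leq 4d \lceil Cc_2/\varepsilon - 4 \rceil^d$. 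Boundedness of $\bar{u}_{\bar\phi}$ and $\|\nabla \bar{u}_{\bar\phi}\|$ by $\mathcal{B}$ follows from $u_\lambda^* \in H^2(\Omega) \hookrightarrow C(\Omega)$ together with local $W^{1,\infty}$ control on the quasi-interpolant. For the approximation estimate itself, I would apply a cellwise Bramble--Hilbert bound $\|u_\lambda^* - \bar{u}_{\bar\phi}\|_{H^1(K)} \leq Ch\,\|u_\lambda^*\|_{H^2(K^*)}$ on each cell $K$ with $K^*$ the patch of overlapping supports, sum over the $\mathcal{O}(N^d)$ cells, and use $\|u_\lambda^*\|_{H^2(\Omega)}\leq c_2$ to conclude $\|u_\lambda^* - \bar{u}_{\bar\phi}\|_{H^1(\Omega)} \leq \varepsilon$. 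The trace theorem then gives $\|Tu_\lambda^* - T\bar{u}_{\bar\phi}\|_{L^2(\partial\Omega)} \leq C_d \varepsilon$, and substituting both inequalities into the definition of $\mathcal{E}_{app}$ yields the claimed bound.

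The main obstacle I expect is bookkeeping, not analysis: realizing the entire partition-of-unity-times-polynomial map as a single $\mathrm{ReLU}^2$ network without exceeding the tight depth budget $\lceil \log_2 d \rceil + 3$, while keeping the width proportional to $N^d$ with the right $4d$ prefactor. In particular, one has to make sure that each polarization identity consumes only a single $\mathrm{ReLU}^2$ layer, that the affine polynomials $P_\alpha$ are produced in parallel with the binary product tree so that the two feed into the same final polarization layer, and that the output summation over $\alpha$ is absorbed into the final affine layer. The approximation-theoretic estimate itself is standard once the architecture is fixed.
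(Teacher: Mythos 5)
Your proposal is correct in substance but follows a genuinely different route from the paper for the approximation-theoretic part. The paper does not build a quasi-interpolant with averaged Taylor polynomials; it expands $u_\lambda^*$ directly in the tensor-product quadratic B-spline basis $\{N^{(3)}_{l,\boldsymbol{i}}\}$ with \emph{constant} coefficients and simply cites a classical spline approximation theorem (Schultz) for the $H^1$ rate $O(2^{-l})\|u_\lambda^*\|_{H^2}$, whereas you derive the rate yourself via a partition of unity and a cellwise Bramble--Hilbert estimate. Your route is more self-contained, but it pays for this in the network realization: because your local polynomials $P_\alpha(x)$ are affine in $x$ rather than constants, you need one extra exact multiplication $\Phi_\alpha\cdot P_\alpha$, and --- the point you gloss over --- you must carry the coordinates $x$ (hence the $P_\alpha$) through all $\lceil\log_2 d\rceil+1$ intermediate $\mathrm{ReLU}^2$ layers so that they are available at the final polarization layer; this is possible since $x=\tfrac14[\sigma(x+1)+\sigma(-x-1)-\sigma(x-1)-\sigma(-x+1)]$ exactly, but it costs additional width at every layer and makes the stated prefactor $4d$ in the width bound tight at best. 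The paper avoids this entirely: each multivariate B-spline is realized by a depth-$(\lceil\log_2 d\rceil+2)$, width-$4d$ subnetwork (four $\mathrm{ReLU}^2$ units per univariate factor, then a binary tree of polarization products, exactly as in your construction), and the constant coefficients are absorbed into the final affine output layer, which is where the paper's cleaner depth and width accounting comes from. The remaining ingredients --- exact representability of quadratic splines and of products by $\mathrm{ReLU}^2$, and the boundedness of the trace operator $T:H^1(\Omega)\to L^2(\partial\Omega)$ for the boundary term --- coincide with the paper's.
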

\begin{proof}
	Our proof is based on some classical  approximation results of  B-splines \cite{schumaker2007spline,de1978practical}.
	Let us recall some notation and useful results.
	We denote by $\pi_{l}$ the dyadic partition of $[0,1]$, i.e.,
	\begin{equation*}
		\pi_{l}: t_{0}^{(l)}=0<t_{1}^{(l)}<\cdots<t_{2^{l}-1}^{(l)}<t_{2^{l}}^{(l)}=1,
	\end{equation*}
	where $t_{i}^{(l)}=i \cdot 2^{-l}(0\leq i\leq 2^l)$. The cardinal B-spline of order $3$ with respect to partition $\pi_l$ is defined by
	\begin{equation*}
		N_{l, i}^{(3)}(x)=(-1)^k\left[t_{i}^{(l)}, \ldots, t_{i+3}^{(l)},(x-t)_{+}^{2}\right] \cdot\left(t_{i+3}^{(l)}-t_{i}^{(l)}\right),\quad i=-2,\cdots,2^l-1
	\end{equation*}
	which can be rewritten in the following equivalent form,
	\begin{equation} \label{B-spline expression}
		N_{l, i}^{(3)}(x)=2^{2l-1}\sum_{j=0}^{3}(-1)^{j}\left(\begin{array}{l}
			3 \\
			j
		\end{array}\right)(x-i2^{-l}-j2^{-l})_{+}^{2},\quad i=-2,\cdots,2^l-1.
	\end{equation}
	The multivariate cardinal B-spline of order $3$ is defined by the product of univariate cardinal B-splines of order $3$, i.e.,
	\begin{equation*}
		{N}_{l, \boldsymbol{i}}^{(3)}(\boldsymbol{x})=\prod_{j=1}^{d} N_{l, i_j}^{(3)}\left(x_{j}\right), \quad \boldsymbol{i}=\left(i_{1}, \ldots, i_{d}\right),-3<i_{j}<2^{l}.
	\end{equation*}
	Denote
	\begin{equation*}
		S_{l}^{(3)}([0,1]^d)=\text{span}\{N_{l,\boldsymbol{i}}^{(3)},-3<i_j<2^l,j=1,2,\cdots,d\}.
	\end{equation*}
	Then, the element $f$ in $S_{l}^{(3)}([0,1]^d)$ are    piecewise  polynomial functions according to to partition $\pi_l^d$   with each piece being  degree $2$ and in $C^{1}([0,1]^d)$. Since
	\begin{equation*}
		S_1^{(3)}\subset S_2^{(3)}\subset S_3^{(3)}\subset\cdots,
	\end{equation*}
	We can further denote
	\begin{equation*}
		S^{(3)}([0,1]^d)=\bigcup_{l=1}^{\infty} S_{l}^{(3)}([0,1]^d).
	\end{equation*}
	The following approximation result  of cardinal B-splines in Sobolev spaces which is a direct consequence of theorem 3.4 in  \cite{schultz1969approximation}  play an important role in the  proof of this Theorem.
	\begin{lemma}\label{spapp}
		Assume  $u^*\in H^2([0,1]^d)$, there exists $\{c_j\}_{j=1}^{(2^l-4)^d}\subset\mathbb{R}$ with $l>2$ such that
		\begin{equation*}
			\|u^*-\sum_{j=1}^{(2^l-4)^d}c_j{N}_{l, \boldsymbol{i}_j}^{(3)}\|_{H^{1}(\Omega)}\leq \frac{C}{2^l}\|u^*\|_{H^1(\Omega)},
		\end{equation*}
		where $C$ is a constant only depend on $d$.
	\end{lemma}
	
	\begin{lemma}\label{spasrelu2}
		The multivariate B-spline ${N}_{l, \boldsymbol{i}}^{(3)}(\boldsymbol{x})$ can be implemented exactly by a $\mathrm{ReLU}^2$  network with depth  $\lceil\log_2d\rceil+2$ and width $4d$.
	\end{lemma}
	\begin{proof}
		Denote
		\begin{equation*}
			\sigma (x)=\left\{\begin{array}{ll}
				x^2, & x\geq 0 \\
				0, & \text{else}
			\end{array}\right.
		\end{equation*}
		as the activation function in $\mathrm{ReLU}^2$  network.
		By definition of $N_{l, i}^{(3)}(x)$ in (\ref{B-spline expression}),  it's clear that $N_{l, i}^{(3)}(x)$ can be implemented by  $\mathrm{ReLU}^2$  network  without any error with depth  $2$ and width $4$. On the other hand  $\mathrm{ReLU}^2$  network  can also realize  multiplication without any error. In fact, for any $x,y\in\mathbb{R}$,
		\begin{equation*}
			xy = \frac{1}{4}[(x+y)^2-(x-y)^2]=\frac{1}{4}[\sigma (x+y)+\sigma (-x-y)-\sigma (x-y)-\sigma (y-x)].
		\end{equation*}
		Hence multivariate B-spline of order $3$  can be implemented by  $\mathrm{ReLU}^2$  network exactly with depth  $\lceil\log_2d\rceil+2$ and width $4d$.
	\end{proof}
	
	For any $\epsilon>0$, by Lemma \ref{spapp} and \ref{spasrelu2} with $\frac{1}{2^l} \leq \left \lceil \frac{C\|u^*\|_{H^2}}{\epsilon}\right \rceil$,
	there exists $\bar{u}_{\bar{\phi}} \in \mathcal{N}^2$, such that
	\begin{equation}\label{H1 b-spline}
		\left\|u_{\lambda}^*-\bar{u}_{\bar{\phi}} \right\|_{H^1(\Omega)} \leq \epsilon.
	\end{equation}
	By the trace theorem, we have
	\begin{equation}\label{H1 b-spline:trace}
		\|Tu_{\lambda}^{*}-T\bar{u}_{\bar{\phi}}\|_{L^2({\partial\Omega})}\leq C_{d}^{1/2}\left\|u_{\lambda}^*-\bar{u}_{\bar{\phi}} \right\|_{H^1(\Omega)}\leq C_{d}^{1/2}\epsilon,
	\end{equation}
	where $C_{d}>0$ is a constant depending only on $\Omega$. The depth $\mathcal{D}$ and width $\mathcal{W}$ of $\bar{u}_{\bar{\phi}}$ are satisfying   $\mathcal{D}\leq\lceil\log_2d\rceil+3$ and $\mathcal{W}\leq 4dn=4d\left \lceil   \frac{C\|u^*\|_{H^2}}{\epsilon}-4\right \rceil^d$, respectively. Combining \eqref{H1 b-spline} and \eqref{H1 b-spline:trace}, we arrive at the result.
\end{proof}

\subsection{Statistical error}
\par In this section, we bound the statistical error
\begin{equation*}
	\mathcal{E}_{sta}=2\sup_{u\in\mathcal{N}^{2}}\left| \mathcal{L}_{\lambda}(u)-\widehat{\mathcal{L}}_{\lambda}(u) \right|.
\end{equation*}
For simplicity of presentation, we use $c_{3}$ to denote the upper bound of $f$, $w$ and suppose $c_{3}\geq\mathcal{B}$, that is
\begin{equation*}
	\|f\|_{L^{\infty}(\Omega)} \vee \|w\|_{L^{\infty}(\Omega)} \vee \mathcal{B} \leq c_{3} < \infty.
\end{equation*}

\par First, we need to decompose the statistical error into four parts, and estimate each one.
\begin{lemma}\label{lemma:staerr:dec}
	\begin{equation*}
		\sup _{u \in \mathcal{N}^2}\left|\mathcal{L}_{\lambda}(u)-\widehat{\mathcal{L}}_{\lambda}(u)\right|\leq\sum_{j=1}^{3}\sup _{u \in \mathcal{N}^2}\left|\mathcal{L}_{\lambda,j}(u)-\widehat{\mathcal{L}}_{\lambda,j}(u)\right|+\frac{\lambda}{2}\sup _{u \in \mathcal{N}^2}\left|\mathcal{L}_{\lambda,4}(u)-\widehat{\mathcal{L}}_{\lambda,4}(u)\right|,
	\end{equation*}
	where
	\begin{equation*}
		\mathcal{L}_{\lambda,1}(u)=|\Omega|\mathop{\mathbb{E}}_{X\sim U(\Omega)}\left[ \frac{\|\nabla u(X)\|_{2}^{2}}{2}\right],\quad \widehat{\mathcal{L}}_{\lambda,1}(u) =\frac{|\Omega|}{N}\sum_{i=1}^{N}\left[ \frac{\|\nabla u(X_{i})\|_{2}^{2}}{2}\right],
	\end{equation*}
	\begin{equation*}
		\mathcal{L}_{\lambda,2}(u)=|\Omega|\mathop{\mathbb{E}}_{X\sim U(\Omega)}\left[ \frac{w(X)u^{2}(X)}{2} \right] , \quad \widehat{\mathcal{L}}_{\lambda,2}(u) =\frac{|\Omega|}{N}\sum_{i=1}^{N}\left[ \frac{w(X_{i})u^{2}(X_{i})}{2} \right],
	\end{equation*}
	\begin{equation*}
		\mathcal{L}_{\lambda,3}(u) = |\Omega|\mathop{\mathbb{E}}_{X\sim U(\Omega)}\left[u(X)f(X) \right] , \quad \widehat{\mathcal{L}}_{\lambda,3}(u) =\frac{|\Omega|}{N}\sum_{i=1}^{N}\left[u(X_{i})f(X_{i}) \right],
	\end{equation*}
	\begin{equation*}
		\mathcal{L}_{\lambda,4}(u)=|\partial\Omega|\mathop{\mathbb{E}}_{Y\sim U(\partial\Omega)}\left[ Tu^{2}(Y) \right], \quad \widehat{\mathcal{L}}_{\lambda,4}(u)=\frac{|\partial\Omega|}{M}\sum_{j=1}^{M}\left[ Tu^{2}(Y_{j}) \right].
	\end{equation*}
\end{lemma}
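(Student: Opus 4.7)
The plan is essentially an application of the triangle inequality together with subadditivity of the supremum; the bulk of the work will actually come later, when each of the four terms on the right-hand side must be controlled separately via Rademacher-complexity / pseudo-dimension arguments. For this lemma alone, no deep idea is required.

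First, I would write out $\mathcal{L}_\lambda(u)$ and $\widehat{\mathcal{L}}_\lambda(u)$ explicitly using the definitions in \eqref{eq:energy:penalty2} and \eqref{eq:energy:penalty:discrete}, and note that by construction
\begin{equation*}
\mathcal{L}_\lambda(u) = \mathcal{L}_{\lambda,1}(u) + \mathcal{L}_{\lambda,2}(u) - \mathcal{L}_{\lambda,3}(u) + \tfrac{\lambda}{2}\mathcal{L}_{\lambda,4}(u),
\end{equation*}
\begin{equation*}
\widehat{\mathcal{L}}_\lambda(u) = \widehat{\mathcal{L}}_{\lambda,1}(u) + \widehat{\mathcal{L}}_{\lambda,2}(u) - \widehat{\mathcal{L}}_{\lambda,3}(u) + \tfrac{\lambda}{2}\widehat{\mathcal{L}}_{\lambda,4}(u),
\end{equation*}
so that the four partial functionals $\mathcal{L}_{\lambda,j},\widehat{\mathcal{L}}_{\lambda,j}$ ($j=1,\dots,4$) add up to the full (continuous and empirical) energies. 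This gives an exact decomposition into gradient, reaction, source and boundary components.

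Next, for a fixed $u\in\mathcal{N}^2$, I subtract the two identities and apply the ordinary triangle inequality to obtain
\begin{equation*}
\bigl|\mathcal{L}_\lambda(u)-\widehat{\mathcal{L}}_\lambda(u)\bigr| \le \sum_{j=1}^{3}\bigl|\mathcal{L}_{\lambda,j}(u)-\widehat{\mathcal{L}}_{\lambda,j}(u)\bigr| + \tfrac{\lambda}{2}\bigl|\mathcal{L}_{\lambda,4}(u)-\widehat{\mathcal{L}}_{\lambda,4}(u)\bigr|.
\end{equation*}
The coefficient $\lambda/2$ in front of the fourth term is deliberately kept outside so that the later Rademacher bound for the boundary term can be stated without having $\lambda$ absorbed into it.

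Finally, I would take $\sup_{u\in\mathcal{N}^2}$ on both sides and use the elementary fact that the supremum of a sum is bounded by the sum of the suprema, since each partial functional depends on $u$. This yields the claimed inequality. There is no genuine obstacle: the only thing to double-check is the bookkeeping of the $|\Omega|$, $|\partial\Omega|$, $1/N$, $1/M$ prefactors, which already appear built into the definitions of $\mathcal{L}_{\lambda,j}$ and $\widehat{\mathcal{L}}_{\lambda,j}$, so nothing further needs to be tracked. The real analytical work is postponed to the subsequent lemmas that estimate each of the four $\sup_u |\mathcal{L}_{\lambda,j}-\widehat{\mathcal{L}}_{\lambda,j}|$; in particular, the gradient term $j=1$ will require the bound on the Rademacher complexity of the non-Lipschitz composition $\|\nabla u_\phi(\cdot)\|^2$ that the introduction flagged as the main technical novelty.
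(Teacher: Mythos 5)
Your proposal is correct and is exactly the argument the paper has in mind: the paper's proof consists of the single line ``It is easy to verified by triangle inequality,'' and your decomposition $\mathcal{L}_\lambda = \mathcal{L}_{\lambda,1}+\mathcal{L}_{\lambda,2}-\mathcal{L}_{\lambda,3}+\tfrac{\lambda}{2}\mathcal{L}_{\lambda,4}$ (and likewise for $\widehat{\mathcal{L}}_\lambda$), followed by the pointwise triangle inequality and subadditivity of the supremum, simply makes that explicit. Nothing further is needed.
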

\begin{proof}
	It is easy to verified by triangle inequality.
\end{proof}

\par We use $\mu$ to denote $\mathrm{U}(\Omega)(\mathrm{U}(\partial \Omega)) .$ Given $n=N(M)$ i.i.d samples $\mathbf{Z}_{n}=\left\{Z_{i}\right\}_{i=1}^{n}$ from $\mu$, with $Z_{i}=X_{i}\left(Y_{i}\right) \sim \mu$, we need the following Rademacher complexity to measure the capacity of the given function class $\mathcal{N}$ restricted on $n$ random samples $\mathbf{Z}_{n}$.

\begin{definition}
The Rademacher complexity of a set $A \subseteq \mathrm{R}^{n}$ is defined as
$$
\mathfrak{R}(A)=\mathbb{E}_{\mathbf{Z}_{n}, \Sigma_{n}}\left[\sup _{a \in A} \frac{1}{n}\left|\sum_{i} \sigma_{i} a_{i}\right|\right]
$$
where $\Sigma_{n}=\left\{\sigma_{i}\right\}_{i=1}^{n}$ are $n$ i.i.d Rademacher variables with $\mathbb{P}\left(\sigma_{i}=1\right)=\mathbb{P}\left(\sigma_{i}=-1\right)=1/2.$ The Rademacher complexity of function class $\mathcal{N}$ associate with random sample $\mathbf{Z}_{n}$ is defined as
\begin{equation*}
	\mathfrak{R}(\mathcal{N})=\mathbb{E}_{\mathbf{Z}_{n}, \Sigma_{n}}\left[\sup _{u \in \mathcal{N}} \frac{1}{n} \left| \sum_{i} \sigma_{i} u\left(Z_{i}\right) \right|\right].
\end{equation*}
\end{definition}

\par For the sake of simplicity, we deal with last three terms first.
\begin{lemma}\label{lemma:staerr:liprad}
	Suppose that $\psi:\mathbb{R}^{d}\times\mathbb{R}\rightarrow\mathbb{R}$, $(x,y)\mapsto\psi(x,y)$ is $\ell$-Lipschitz continuous on $y$ for all $x$. Let $\mathcal{N}$ be classes of functions on $\Omega$ and $\psi\circ\mathcal{N}=\{\psi\circ u:x\mapsto\psi(x,u(x)) ,u\in\mathcal{N}\}$. Then
	\begin{equation*}
		\mathfrak{R}(\psi\circ\mathcal{N})\leq\ell \ \mathfrak{R}(\mathcal{N})
	\end{equation*}
\end{lemma}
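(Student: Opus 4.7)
The plan is to prove this by the standard peeling argument underlying Talagrand's contraction lemma, adapted to handle both the extra dependence of $\psi$ on the sample point $x$ and the outer absolute value in the definition of $\mathfrak{R}$. Since the $Z_i$ are frozen inside the conditional expectation over the Rademacher variables, the dependence of $\psi$ on $x$ contributes no real difficulty: for each fixed $Z_k$, the map $y\mapsto\psi(Z_k,y)$ is an $\ell$-Lipschitz function of a single real variable, and the contraction estimate may be applied coordinatewise.

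The heart of the argument is a single-index reduction. Fix the sample $\mathbf{Z}_n$ and all Rademacher variables except one, say $\sigma_k$, and write $A(u)=\sum_{j\neq k}\sigma_j\psi(Z_j,u(Z_j))$. I would first establish
$$\mathbb{E}_{\sigma_k}\Bigl[\sup_{u\in\mathcal{N}}\bigl|A(u)+\sigma_k\psi(Z_k,u(Z_k))\bigr|\Bigr]\;\le\;\mathbb{E}_{\sigma_k}\Bigl[\sup_{u\in\mathcal{N}}\bigl|A(u)+\ell\,\sigma_k u(Z_k)\bigr|\Bigr].$$
Averaging over $\sigma_k\in\{-1,+1\}$ expands both sides into $\tfrac12$ times a sum of two independent suprema, so the inequality reduces to showing that for any $u_1,u_2\in\mathcal{N}$ there exist $u_1',u_2'\in\mathcal{N}$ making $|A(u_1)+\psi(Z_k,u_1(Z_k))|+|A(u_2)-\psi(Z_k,u_2(Z_k))|$ bounded above by $|A(u_1')+\ell u_1'(Z_k)|+|A(u_2')-\ell u_2'(Z_k)|$. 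Taking $u_1'=u_1$ and $u_2'=u_2$, the Lipschitz estimate $|\psi(Z_k,u_1(Z_k))-\psi(Z_k,u_2(Z_k))|\le\ell|u_1(Z_k)-u_2(Z_k)|$ together with the freedom to swap $u_1\leftrightarrow u_2$ (justified by $\sigma_k\stackrel{d}{=}-\sigma_k$) yields the pointwise bound after a short case analysis on the signs inside the absolute values.

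Iterating this one-index reduction over $k=1,\dots,n$ replaces each $\psi(Z_k,u(Z_k))$ by $\ell\,u(Z_k)$ in turn; the $\ell$'s factor out, and taking outer expectation over $\mathbf{Z}_n$ and $\Sigma_n$ and normalizing by $n$ produces the desired $\mathfrak{R}(\psi\circ\mathcal{N})\le\ell\,\mathfrak{R}(\mathcal{N})$. The main obstacle is the bookkeeping around the outer absolute values, which are not present in the textbook form of the Ledoux--Talagrand inequality and must be handled by the sign-matching argument just described; once that is in place, the rest is the routine telescoping that drives every contraction-type lemma.
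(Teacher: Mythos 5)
The paper does not prove this lemma at all --- it simply cites the Ledoux--Talagrand contraction principle --- so you are supplying a proof from scratch, and your peeling skeleton is indeed the standard one. However, the key step of your argument is false as stated. The single-index reduction
\begin{equation*}
\mathbb{E}_{\sigma_k}\Bigl[\sup_{u}\bigl|A(u)+\sigma_k\psi(Z_k,u(Z_k))\bigr|\Bigr]\le\mathbb{E}_{\sigma_k}\Bigl[\sup_{u}\bigl|A(u)+\ell\,\sigma_k u(Z_k)\bigr|\Bigr]
\end{equation*}
does not survive the outer absolute value, and no sign-matching or swapping of $u_1,u_2$ rescues it. Concrete counterexample: take $\ell=1$, $\psi(x,y)=|y|$, and two functions $u,v$ with $A(u)=-1$, $u(Z_k)=1$, $A(v)=1$, $v(Z_k)=-1$ (realizable at an intermediate stage of the peeling, e.g.\ with $n=2$ after index $1$ has already been converted to $\ell\,\sigma_1 u(Z_1)$ and conditioning on $\sigma_1=1$). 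Then the left side equals $\tfrac12\max(0,2)+\tfrac12\max(2,0)=2$, while the right side equals $\tfrac12\max(0,0)+\tfrac12\max(2,2)=1$. So the inequality you need at each step can fail, and the telescoping collapses. This is not an accident of your bookkeeping: with the absolute value inside the supremum, the contraction principle is only known with an extra factor of $2$ (Ledoux--Talagrand, Theorem 4.12), whereas the constant-$1$ version you are reproducing holds for the definition of Rademacher complexity \emph{without} the absolute value.

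Two further points. First, both your argument and the lemma itself implicitly require the normalization $\psi(x,0)=0$: otherwise the statement is simply false (take $\psi(x,y)=y+c$ with $c\neq0$ and $\mathcal{N}=\{0\}$, so that $\mathfrak{R}(\mathcal{N})=0$ but $\mathfrak{R}(\psi\circ\mathcal{N})=|c|$). The paper's applications all satisfy $\psi_i(x,0)=0$, but the hypothesis should be stated. Second, the honest fixes are either to prove the bound $\mathfrak{R}(\psi\circ\mathcal{N})\le 2\ell\,\mathfrak{R}(\mathcal{N})$ (harmless for the paper's purposes, since only the order of the statistical error matters), or to work with the absolute-value-free Rademacher complexity, for which your peeling argument, with the clean case analysis on the sign of $t_u-t_v$, is exactly the correct and complete proof.
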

\begin{proof}
	Corollary 3.17 in \cite{ledoux2013probability}.
\end{proof}

\begin{lemma}\label{lemma:staerr:Rad234}
	\begin{equation*}
		\begin{aligned}
			\mathbb{E}_{\boldsymbol{Z}_{n}}\left[ \sup_{u\in\mathcal{N}^{2}}\left| \mathcal{L}_{\lambda,2}(u)-\widehat{\mathcal{L}}_{\lambda,2}(u) \right| \right] & \leq c_{3}^{2}\mathfrak{R}(\mathcal{N}^{2}), \\
			\mathbb{E}_{\boldsymbol{Z}_{n}}\left[ \sup_{u\in\mathcal{N}^{2}}\left| \mathcal{L}_{\lambda,3}(u)-\widehat{\mathcal{L}}_{\lambda,3}(u) \right| \right] & \leq c_{3}\mathfrak{R}(\mathcal{N}^{2}),     \\
			\mathbb{E}_{\boldsymbol{Z}_{n}}\left[ \sup_{u\in\mathcal{N}^{2}}\left| \mathcal{L}_{\lambda,4}(u)-\widehat{\mathcal{L}}_{\lambda,4}(u) \right| \right] & \leq 2c_{3}\mathfrak{R}(\mathcal{N}^{2}). \\
		\end{aligned}
	\end{equation*}
\end{lemma}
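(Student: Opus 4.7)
The plan is to handle all three terms with the same two-step empirical-process recipe: a standard symmetrization inequality will bound the expected supremum of the empirical-process deviation by the Rademacher complexity of the composite integrand class, and then the Talagrand-type contraction principle (Lemma \ref{lemma:staerr:liprad}) will strip off the outer map to leave the base Rademacher complexity $\mathfrak{R}(\mathcal{N}^2)$, scaled by the Lipschitz constant of that outer map in the function-value argument.

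Concretely, for each $j\in\{2,3,4\}$ I would write the integrand as $\psi_j(z,u(z))$ with $\psi_j$ a fixed function, then estimate its Lipschitz constant $\ell_j$ in the second slot over the range $[-c_3,c_3]$, which contains all values of $u\in\mathcal{N}^2$ since $\mathcal{B}\le c_3$. For $j=2$, $\psi_2(x,y)=\tfrac{1}{2}w(x)y^2$ has $|\partial_y\psi_2|=|w(x)\,y|\le c_3\cdot c_3=c_3^2$, so $\ell_2=c_3^2$. For $j=3$, $\psi_3(x,y)=f(x)\,y$ has $\ell_3=c_3$. For $j=4$, using $|Tu(y)|\le c_3$, the map $z\mapsto z^2$ on $[-c_3,c_3]$ is $2c_3$-Lipschitz, giving $\ell_4=2c_3$; here the samples are $\{Y_j\}_{j=1}^M\sim U(\partial\Omega)$, and the $|\partial\Omega|$ prefactor in $\mathcal{L}_{\lambda,4}$ cancels with the $|\partial\Omega|$ factor in the empirical form.

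After symmetrization and contraction one obtains bounds of the shape
\[
\mathbb{E}_{\boldsymbol{Z}_n}\sup_{u\in\mathcal{N}^2}\bigl|\mathcal{L}_{\lambda,j}(u)-\widehat{\mathcal{L}}_{\lambda,j}(u)\bigr|\;\le\;\ell_j\,\mathfrak{R}(\mathcal{N}^2),
\]
which reproduces the three claimed inequalities with the stated constants $c_3^2$, $c_3$, and $2c_3$. The main points that must be verified are (i) that Lemma \ref{lemma:staerr:liprad} applies cleanly in each case, i.e.\ that $y\mapsto\psi_j(x,y)$ is Lipschitz uniformly in $x$, which is immediate from the uniform bounds $|w|,|f|\le c_3$ and $|u|,|Tu|\le c_3$, and (ii) the usual anchoring condition $\psi_j(\cdot,0)\equiv 0$, which holds because every integrand is proportional to a positive power of $u$, so no additive correction is required. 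Beyond this bookkeeping of Lipschitz constants and uniform bounds on the class $\mathcal{N}^2$ there is no substantive obstacle; the real difficulty in bounding $\mathcal{E}_{sta}$ lies in the remaining first term of Lemma \ref{lemma:staerr:dec}, where $\|\nabla u(X_i)\|_2^2$ is a non-Lipschitz composition requiring the pseudo-dimension machinery emphasized in the paper.
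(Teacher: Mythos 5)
Your proposal follows essentially the same route as the paper's proof: symmetrization to pass to $\mathfrak{R}(\psi_j\circ\mathcal{N}^2)$, then the contraction principle of Lemma \ref{lemma:staerr:liprad} with exactly the Lipschitz constants $c_3^2$, $c_3$, and $2c_3$ obtained from the uniform bounds $|w|,|f|\le c_3$ and $|u|,|Tu|\le\mathcal{B}\le c_3$. The only cosmetic difference is your extra remark about the $|\partial\Omega|$ prefactor (which factors out of the difference rather than cancels, and which the paper likewise suppresses); the argument is correct.
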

\begin{proof}
	Suppose $|y|<c_{3}$. Define
	\begin{equation*}
		\psi_{2}(x,y)= \frac{w(x)y^{2}}{2},\ \psi_{3}(x,y)= f(x)y,\ \psi_{4}(x,y)= y^{2}.
	\end{equation*}
	According to the symmetrization method, we have
	\begin{equation}\label{eq:staerr:sym}
		\begin{aligned}
		\mathbb{E}_{\boldsymbol{Z}_{n}}\left[ \sup_{u\in\mathcal{N}^{2}}\left| \mathcal{L}_{\lambda,2}(u)-\widehat{\mathcal{L}}_{\lambda,2}(u) \right| \right] & \leq \mathfrak{R}(\psi_{2}\circ\mathcal{N}^{2}), \\
		\mathbb{E}_{\boldsymbol{Z}_{n}}\left[ \sup_{u\in\mathcal{N}^{2}}\left| \mathcal{L}_{\lambda,3}(u)-\widehat{\mathcal{L}}_{\lambda,3}(u) \right| \right] & \leq \mathfrak{R}(\psi_{3}\circ\mathcal{N}^{2}),     \\
		\mathbb{E}_{\boldsymbol{Z}_{n}}\left[ \sup_{u\in\mathcal{N}^{2}}\left| \mathcal{L}_{\lambda,4}(u)-\widehat{\mathcal{L}}_{\lambda,4}(u) \right| \right] & \leq \mathfrak{R}(\psi_{4}\circ\mathcal{N}^{2}). \\
		\end{aligned}
	\end{equation}

	The result will follow from Lemma \ref{lemma:staerr:liprad} and \eqref{eq:staerr:sym} directly, if we can show that $\psi_{i}(x,y)$, $i=2,3,4$ are $c_{3}^{2}$, $c_{3}$, $2c_{3}$ -Lipschitz continuous on $y$ for all $x$, respectively. For arbitrary $y_{1}$, $y_{2}$ with $|y_{i}|\leq c_{3}$, $i=1,2$
	\begin{equation*}
		|\psi_{2}(x,y_{1})-\psi_{2}(x,y_{2})|=\left| \frac{w(x)y_{1}^{2}}{2}-\frac{w(x)y_{2}^{2}}{2} \right|=\frac{|w(x)(y_{1}+y_{2})|}{2}|y_{1}-y_{2}|\leq c_{3}^{2}|y_{1}-y_{2}|,
	\end{equation*}
	\begin{equation*}
		|\psi_{3}(x,y_{1})-\psi_{3}(x,y_{2})|=\left| f(x)y_{1}-f(x)y_{2} \right|=|f(x)||y_{1}-y_{2}|\leq c_{3}|y_{1}-y_{2}|,
	\end{equation*}
	\begin{equation*}
		|\psi_{4}(x,y_{1})-\psi_{4}(x,y_{2})|=\left| y_{1}^{2}-y_{2}^{2} \right|=|y_{1}+y_{2}||y_{1}-y_{2}|\leq 2c_{3}|y_{1}-y_{2}|.
	\end{equation*}
\end{proof}

\par We now turn to the most difficult term in Lemma \ref{lemma:staerr:dec}. Since gradient is not a Lipschitz operator, Lemma \ref{lemma:staerr:liprad} does not work and we can not bound the Rademacher complexity in the same way.
\begin{lemma}\label{lemma:staerr:Rad1}
	\begin{equation*}
		\mathbb{E}_{\boldsymbol{Z}_{n}}\left[ \sup_{u\in\mathcal{N}^{2}}\left| \mathcal{L}_{\lambda,1}(u)-\widehat{\mathcal{L}}_{\lambda,1}(u) \right| \right]\leq \mathfrak{R}(\mathcal{N}^{1,2}). \\
	\end{equation*}
\end{lemma}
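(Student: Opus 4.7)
The plan is to combine the standard symmetrization trick with an explicit neural-network realization of the map $x \mapsto \tfrac{1}{2}\|\nabla u(x)\|_2^2$, bypassing the contraction lemma (which fails because squaring the gradient is not Lipschitz in the network output). Since $\Omega = [0,1]^d$ we have $|\Omega|=1$, so the prefactor disappears and the left-hand side reduces to $\mathbb{E}\sup_{u\in\mathcal{N}^2}\bigl|\mathbb{E}[\tfrac{1}{2}\|\nabla u(X)\|_2^2] - \tfrac{1}{N}\sum_i \tfrac{1}{2}\|\nabla u(X_i)\|_2^2\bigr|$. A direct application of the symmetrization inequality (the same one used implicitly in \eqref{eq:staerr:sym}) bounds this by
\begin{equation*}
\mathfrak{R}\Bigl(\bigl\{x \mapsto \tfrac{1}{2}\|\nabla u(x)\|_2^2 : u \in \mathcal{N}^2\bigr\}\Bigr).
\end{equation*}
It then suffices to show that this induced function class is a subset of $\mathcal{N}^{1,2}$, after which monotonicity of Rademacher complexity in the hypothesis class finishes the proof.

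The core step is the explicit construction. For any $u\in\mathcal{N}^2$ with $\mathrm{ReLU}^2$ activation, the chain rule together with the identity $\tfrac{d}{dx}\mathrm{ReLU}^2(x) = 2\,\mathrm{ReLU}(x)$ expresses each partial derivative $\partial_k u$ exactly as a feed-forward network that uses both $\mathrm{ReLU}$ (from the differentiated activations) and $\mathrm{ReLU}^2$ (from the preserved forward activations along the primal path). Concretely, one differentiates the recursion defining $u_\ell$ and writes the layerwise Jacobian-vector product as additional affine maps composed with $\mathrm{ReLU}$, producing a $\mathrm{ReLU}^{1,2}$ network whose depth and width are controlled by those of $u$. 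Squaring each coordinate is then realized losslessly by the polarization identity $y^2 = \mathrm{ReLU}^2(y) + \mathrm{ReLU}^2(-y)$ (already invoked in Lemma \ref{spasrelu2}), which only adds one further $\mathrm{ReLU}^2$ layer. Finally a linear combination sums $\sum_k (\partial_k u)^2$ and multiplies by $\tfrac{1}{2}$, yielding $\tfrac{1}{2}\|\nabla u\|_2^2$ as a single network in $\mathcal{N}^{1,2}$. The boundedness condition in the definition of $\mathcal{N}^{1,2}$ is inherited from the assumption $\|\nabla u\|_2^2 \leq \mathcal{B}$ built into $\mathcal{N}^2$.

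The main obstacle I anticipate is the bookkeeping for the parameters of the induced network: one must verify that the depth, width, and bound defining $\mathcal{N}^{1,2}$ are chosen (or interpreted) generously enough to accommodate the doubled-up network produced by the chain-rule construction plus the polarization step. Provided the class $\mathcal{N}^{1,2}$ is treated as containing every such composite derivative-squared network (as the notation in the definition suggests, since $\mathcal{W}$ and $\mathcal{D}$ are suppressed when writing $\mathcal{N}^{1,2}$), the inclusion is immediate and the Rademacher bound follows without any further Lipschitz machinery.
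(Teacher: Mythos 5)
Your proposal is correct and follows essentially the same route as the paper: symmetrization reduces the left-hand side to the Rademacher complexity of the class $\{x\mapsto\|\nabla u(x)\|_2^2\}$, and then the identity $\frac{d}{dx}\mathrm{ReLU}^2(x)=2\,\mathrm{ReLU}(x)$ together with the polarization trick is used to realize $\|\nabla u\|_2^2$ exactly as a $\mathrm{ReLU}$--$\mathrm{ReLU}^2$ network, which is precisely the Claim in the paper's proof. The bookkeeping you flag is carried out there by induction over layers, yielding depth $\mathcal{D}+3$ and width $d(\mathcal{D}+2)\mathcal{W}$ for the induced network, and $\mathcal{N}^{1,2}$ is indeed understood with these enlarged parameters.
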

\begin{proof}
	Based on the symmetrization method, we have
	\begin{equation}\label{eq:upr12}
		\mathbb{E}_{\boldsymbol{Z}_n}\left[\sup_{u \in \mathcal{N}^2} \left|\mathcal{L}_{\lambda,1}(u) - \widehat{\mathcal{L}}_{\lambda,1}(u)\right|\right] \leq \mathbb{E}_{\boldsymbol{Z}_n,\Sigma_n}\left[\sup_{u\in \mathcal{N}^2}\frac{1}{n}\left|\sum_{i} \sigma_i \|\nabla u(Z_i)\|^2\right|\right]
	\end{equation}
	The proof of \eqref{eq:upr12} is a direct consequence of the following claim.
	\par\noindent{\textbf{Claim}: Let $u$ be a function implemented by a $\mathrm{ReLU}^{2}$ network with depth $\mathcal{D}$ and width $\mathcal{W}$. Then $\|\nabla u\|_2^2$ can be implemented by a $\mathrm{ReLU}$-$\mathrm{ReLU}^{2}$ network with depth $\mathcal{D}+3$ and width $d\left(\mathcal{D}+2\right)\mathcal{W}$.}
	
	Denote $\mathrm{ReLU}$ and $\mathrm{ReLU}^{2}$ as $\sigma_1$ and $\sigma_2$, respectively.
	As long as we show that each partial derivative  $D_iu(i=1,2,\cdots,d)$ can be implemented by a $\mathrm{ReLU}$-$\mathrm{ReLU}^{2}$ network respectively, we can easily obtain the network we desire, since, $\|\nabla u\|_2^2=\sum_{i=1}^{d}\left|D_i u\right|^2$  and the square function can be implemented by  $x^2=\sigma_2(x)+\sigma_2(-x)$.
	
	Now we show that for any $i=1,2,\cdots,d$, $D_iu$ can be implemented by a $\mathrm{ReLU}$-$\mathrm{ReLU}^{2}$ network. We deal with the first two layers in details since there  are  a  little bit difference for the first two layer and apply induction for layers $k\geq3$. For the first layer, since $\sigma_2^{'}(x)=2\sigma_1(x)$, we have for any $q=1,2\cdots,n_1$
	\begin{equation*}
		D_iu_q^{(1)}=D_i\sigma_2\left(\sum_{j=1}^{d}a_{qj}^{(1)}x_j+b_q^{(1)}\right)
		=2\sigma_1\left(\sum_{j=1}^{d}a_{qj}^{(1)}x_j+b_q^{(1)}\right)\cdot a_{qi}^{(1)}
	\end{equation*}
	Hence $D_iu_q^{(1)}$ can be implemented by a $\mathrm{ReLU}$-$\mathrm{ReLU}^{2}$ network with depth $2$ and width $1$. For the second layer,
	\begin{equation*}
		D_iu_q^{(2)}=D_i\sigma_2\left(\sum_{j=1}^{n_1}a_{qj}^{(2)}u_j^{(1)}+b_q^{(2)}\right)
		=2\sigma_1\left(\sum_{j=1}^{n_1}a_{qj}^{(2)}u_j^{(1)}+b_q^{(2)}\right)\cdot\sum_{j=1}^{n_1}a_{qj}^{(2)}D_iu_j^{(1)}
	\end{equation*}
	Since $\sigma_1\left(\sum_{j=1}^{n_1}a_{qj}^{(2)}u_j^{(1)}+b_q^{(2)}\right)$ and $\sum_{j=1}^{n_1}a_{qj}^{(2)}D_iu_j^{(1)}$ can be implemented by two $\mathrm{ReLU}$-$\mathrm{ReLU}^{2}$ subnetworks, respectively, and the multiplication can also be implemented by
	\begin{equation*}
		\begin{split}
			x\cdot y&=\frac{1}{4}\left[(x+y)^2-(x-y)^2\right]\\
			&=\frac{1}{4}\left[\sigma_2(x+y)+\sigma_2(-x-y)-\sigma_2(x-y)-\sigma_2(-x+y)\right],
		\end{split}
	\end{equation*}
	we conclude that $D_iu_q^{(2)}$ can be implemented by a $\mathrm{ReLU}$-$\mathrm{ReLU}^{2}$ network. We have $$\mathcal{D}\left(\sigma_1\left(\sum_{j=1}^{n_1}a_{qj}^{(2)}u_j^{(1)}+b_q^{(2)}\right)\right)=3, \mathcal{W}\left(\sigma_1\left(\sum_{j=1}^{n_1}a_{qj}^{(2)}u_j^{(1)}+b_q^{(2)}\right)\right)\leq\mathcal{W}$$ and $$\mathcal{D}\left(\sum_{j=1}^{n_1}a_{qj}^{(2)}D_iu_j^{(1)}\right)=2, \mathcal{W}\left(\sum_{j=1}^{n_1}a_{qj}^{(2)}D_iu_j^{(1)}\right)\leq\mathcal{W}.$$ Thus $\mathcal{D}\left(D_iu_q^{(2)}\right)=4,$ $\mathcal{W}\left(D_iu_q^{(2)}\right)\leq\max\{2\mathcal{W},4\}$.
	
	Now we apply induction for layers $k\geq3$. For the third layer,
	\begin{equation*}
		D_iu_q^{(3)}=D_i\sigma_2\left(\sum_{j=1}^{n_2}a_{qj}^{(3)}u_j^{(2)}+b_q^{(3)}\right)
		=2\sigma_1\left(\sum_{j=1}^{n_2}a_{qj}^{(3)}u_j^{(2)}+b_q^{(3)}\right)\cdot\sum_{j=1}^{n_2}a_{qj}^{(3)}D_iu_j^{(2)}.
	\end{equation*}
	Since
	\begin{equation*}
		\mathcal{D}\left(\sigma_1\left(\sum_{j=1}^{n_2}a_{qj}^{(3)}u_j^{(2)}+b_q^{(3)}\right)\right)=4, \mathcal{W}\left(\sigma_1\left(\sum_{j=1}^{n_2}a_{qj}^{(3)}u_j^{(2)}+b_q^{(3)}\right)\right)\leq\mathcal{W}
	\end{equation*}
	and $$\mathcal{D}\left(\sum_{j=1}^{n_2}a_{qj}^{(3)}D_iu_j^{(2)}\right)=4, \mathcal{W}\left(\sum_{j=1}^{n_1}a_{qj}^{(3)}D_iu_j^{(2)}\right)\leq\max\{2\mathcal{W},4\mathcal{W}\}=4\mathcal{W},$$ we conclude that $D_iu_q^{(3)}$ can be implemented by a $\mathrm{ReLU}$-$\mathrm{ReLU}^{2}$ network and $\mathcal{D}\left(D_iu_q^{(3)}\right)=5$, $\mathcal{W}\left(D_iu_q^{(3)}\right)\leq\max\{5\mathcal{W},4\}=5\mathcal{W}$.
	
	We assume that $D_iu_q^{(k)}(q=1,2,\cdots,n_k)$ can be implemented by a $\mathrm{ReLU}$-$\mathrm{ReLU}^{2}$ network and $\mathcal{D}\left(D_iu_q^{(k)}\right)=k+2$, $\mathcal{W}\left(D_iu_q^{(3)}\right)\leq(k+2)\mathcal{W}$. For the $(k+1)-$th layer,
	\begin{align*}
		&D_iu_q^{(k+1)}=D_i\sigma_2\left(\sum_{j=1}^{n_k}a_{qj}^{(k+1)}u_j^{(k)}+b_q^{(k+1)}\right)\\
		&=2\sigma_1\left(\sum_{j=1}^{n_k}a_{qj}^{(k+1)}u_j^{(k)}+b_q^{(k+1)}\right)\cdot\sum_{j=1}^{n_k}a_{qj}^{(k+1)}D_iu_j^{(k)}.
	\end{align*}
	Since
	\begin{equation*}
		\begin{aligned}
			\mathcal{D}\left(\sigma_1\left(\sum_{j=1}^{n_k}a_{qj}^{(k+1)}u_j^{(k)}+b_q^{(k+1)}\right)\right)&=k+2, \\
			\mathcal{W}\left(\sigma_1\left(\sum_{j=1}^{n_k}a_{qj}^{(k+1)}u_j^{(k)}+b_q^{(k+1)}\right)\right)&\leq\mathcal{W},
		\end{aligned}
	\end{equation*}
	and
	\begin{equation*}
		\begin{aligned}
			\mathcal{D}\left(\sum_{j=1}^{n_k}a_{qj}^{(k+1)}D_iu_j^{(k)}\right)&=k+2, \\ \mathcal{W}\left(\sum_{j=1}^{n_k}a_{qj}^{(k+1)}D_iu_j^{(k)}\right)&\leq\max\{(k+2)\mathcal{W},4\mathcal{W}\}=(k+2)\mathcal{W},
		\end{aligned}
	\end{equation*}
	we conclude that $D_iu_q^{(k+1)}$ can be implemented by a $\mathrm{ReLU}$-$\mathrm{ReLU}^{2}$ network and $\mathcal{D}\left(D_iu_q^{(k+1)}\right)=k+3$, $\mathcal{W}\left(D_iu_q^{(k+1)}\right)\leq\max\{(k+3)\mathcal{W},4\}=(k+3)\mathcal{W}$.
	
	Hence we derive that $D_iu=D_iu_1^{\mathcal{D}}$ can be implemented by a $\mathrm{ReLU}$-$\mathrm{ReLU}^{2}$ network and $\mathcal{D}\left(D_iu\right)=\mathcal{D}+2$, $\mathcal{W}\left(D_iu\right)\leq\left(\mathcal{D}+2\right)\mathcal{W}$. Finally we obtain that $\mathcal{D}\left(\|\nabla u\|^2\right)=\mathcal{D}+3$, $\mathcal{W}\left(\|\nabla u\|^2\right)\leq d\left(\mathcal{D}+2\right)\mathcal{W}$.
\end{proof}

\par We are now in a position to bound the Rademacher complexity of $\mathcal{N}^{2}$ and $\mathcal{N}^{1,2}$. To obtain the estimation, we need to introduce covering number, VC-dimension, pseudo-dimension and recall several properties of them.

\begin{definition}
Suppose that $W \subset \mathbb{R}^{n} .$ For any $\epsilon>0$, let $V \subset \mathbb{R}^{n}$ be a $\epsilon$ -cover of $W$ with respect to the distance $d_{\infty}$, that is, for any $w \in W$, there exists a $v \in V$ such that $d_{\infty}(u, v)<\epsilon$, where $d_{\infty}$ is defined by
$$
d_{\infty}(u, v):=\|u-v\|_{\infty} .
$$
The covering number $\mathcal{C}\left(\epsilon, W, d_{\infty}\right)$ is defined to be the minimum cardinality among all $\epsilon$-cover of $W$ with respect to the distance $d_{\infty}$.	
\end{definition}

\begin{definition}
Suppose that $\mathcal{N}$ is a class of functions from $\Omega$ to $\mathbb{R} .$ Given $n$ sample $\mathbf{Z}_{n}=\left(Z_{1}, Z_{2}, \cdots, Z_{n}\right) \in \Omega^{n},\left.\mathcal{N}\right|_{\mathbf{z}_{n}} \subset \mathbb{R}^{n}$ is defined by
\begin{equation*}
\mathcal{N} \mid \mathbf{z}_{n}=\left\{\left(u\left(Z_{1}\right), u\left(Z_{2}\right), \cdots, u\left(Z_{n}\right)\right): u \in \mathcal{N}\right\}.
\end{equation*}
The uniform covering number $\mathcal{C}_{\infty}(\epsilon, \mathcal{N}, n)$ is defined by
\begin{equation*}
\mathcal{C}_{\infty}(\epsilon, \mathcal{N}, n)=\max _{\mathbf{Z}_{n} \in \Omega^{n}} \mathcal{C}\left(\epsilon, \mathcal{N} \mid \mathbf{z}_{n}, d_{\infty}\right)
\end{equation*}
\end{definition}

\par Next we give a upper bound of $\mathfrak{R}\left(\mathcal{N}\right)$ in terms of the covering number of $\mathcal{N}$ by using the Dudley's entropy formula \cite{dudley}.

\begin{lemma}[Massart's finite class lemma \cite{boucheron2013concentration}]\label{Mfinite}
	For any finite set $V\in\mathbb{R}^{n}$ with diameter $D=\sum_{v\in V}\|v\|_{2}$, then
	\begin{equation*}
		\mathbb{E}_{\Sigma_{n}}\left[ \sup_{v\in V}\frac{1}{n}\left| \sum_{i}\sigma_{i}v_{i} \right| \right] \leq \frac{D}{n}\sqrt{2\log(2|V|)}.
	\end{equation*}
\end{lemma}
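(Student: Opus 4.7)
The plan is to invoke the classical exponential-moment (Chernoff) argument together with Hoeffding's moment generating function bound for Rademacher sums. First I would get rid of the absolute value by doubling: setting $V' = V \cup (-V)$, one has $\sup_{v \in V}|\sum_i \sigma_i v_i| = \sup_{v' \in V'} \sum_i \sigma_i v_i'$, with $|V'| \le 2|V|$ and the same bound on $\|v'\|_2$ as on $\|v\|_2$. It is therefore enough to prove $\mathbb{E}_{\Sigma_n}[\sup_{v' \in V'} \sum_i \sigma_i v_i'] \le D\sqrt{2\log(2|V|)}$, where $D$ plays the role of an upper bound on $\max_{v' \in V'}\|v'\|_2$ (which the quantity $\sum_{v\in V}\|v\|_2$ trivially dominates).

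Next, for any parameter $s>0$, I would apply Jensen's inequality to the convex function $\exp(s\,\cdot)$, giving
\begin{equation*}
\exp\!\Bigl(s\,\mathbb{E}_{\Sigma_n}\sup_{v'\in V'}\textstyle\sum_i \sigma_i v_i'\Bigr)
\;\le\; \mathbb{E}_{\Sigma_n}\exp\!\Bigl(s\sup_{v'\in V'}\textstyle\sum_i \sigma_i v_i'\Bigr)
\;=\; \mathbb{E}_{\Sigma_n}\sup_{v'\in V'}\exp\!\Bigl(s\textstyle\sum_i \sigma_i v_i'\Bigr).
\end{equation*}
Then I would crudely bound the supremum by the sum over $V'$, reducing the problem to controlling, for each fixed $v'$, the moment generating function $\mathbb{E}\exp(s\sum_i \sigma_i v_i')$.

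The third step is Hoeffding's lemma applied coordinate by coordinate: since the $\sigma_i$ are independent and symmetric with values in $\{\pm1\}$, one has $\mathbb{E}\exp(s\sigma_i v_i') \le \exp(s^2 (v_i')^2/2)$, and taking the product over $i$ yields $\mathbb{E}\exp(s\sum_i \sigma_i v_i') \le \exp(s^2\|v'\|_2^2/2) \le \exp(s^2 D^2/2)$. Combining with the preceding display produces
\begin{equation*}
\exp\!\Bigl(s\,\mathbb{E}_{\Sigma_n}\sup_{v'\in V'}\textstyle\sum_i \sigma_i v_i'\Bigr) \;\le\; 2|V|\exp(s^2 D^2/2),
\end{equation*}
so that $\mathbb{E}_{\Sigma_n}\sup_{v'\in V'}\sum_i \sigma_i v_i' \le \tfrac{\log(2|V|)}{s} + \tfrac{sD^2}{2}$.

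Finally, I would optimize the free parameter, choosing $s = \sqrt{2\log(2|V|)}/D$, which yields the bound $D\sqrt{2\log(2|V|)}$; dividing by $n$ recovers the stated inequality. There is no genuine obstacle here: the only subtlety is the symmetrization over $\pm V$ to handle the absolute value, which is what causes the factor $2$ inside the logarithm. Everything else is a standard Chernoff-plus-Hoeffding calculation with an explicit optimization of $s$.
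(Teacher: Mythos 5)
Your proof is correct and is precisely the standard exponential-moment argument (Jensen, sup-by-sum, Hoeffding's lemma, optimize $s$) used in the cited source \cite{boucheron2013concentration}; the paper itself does not prove this lemma but delegates it to that reference, so there is nothing divergent to compare. You also sensibly repair the statement's misprint: the quantity $D$ must be an upper bound on $\max_{v\in V}\|v\|_2$ (the displayed definition $D=\sum_{v\in V}\|v\|_2$ is a typo, and your observation that it dominates the max keeps the stated inequality valid), and your doubling $V'=V\cup(-V)$ correctly accounts for the factor $2$ inside the logarithm.
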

We give an upper bound of $\mathfrak{R}(\mathcal{N})$ in terms of the covering number by using the Dudley's entropy formula \cite{dudley}.
\begin{lemma}[Dudley's entropy formula \cite{dudley}]\label{lemma:staerr:Rad_cover}
	Assume $0\in \mathcal{N}$ and the diameter of $\mathcal{N}$ is less than $\mathcal{B}$, i.e., $\|u\|_{L^\infty(\Omega)} \leq \mathcal{B}, \forall u \in \mathcal{N}$.  Then
	$$\mathfrak{R}(\mathcal{N}) \leq  \inf_{0<\delta<\mathcal{B}}\left(4 \delta+\frac{12}{\sqrt{n}} \int_{\delta}^{\mathcal{B}} \sqrt{\log (2\mathcal{C}\left( \varepsilon, \mathcal{N}, n\right))}\mathrm{d}\varepsilon\right).$$
\end{lemma}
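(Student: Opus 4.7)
My plan is to apply the classical \emph{chaining} technique to the Rademacher process $u\mapsto \tfrac{1}{n}\sum_i \sigma_i u(Z_i)$. First I would fix $\delta\in(0,\mathcal{B})$ and set up dyadic scales $\epsilon_k = \mathcal{B}\cdot 2^{-k}$ for $k\ge 0$. For each $k$ I would pick a minimum $\epsilon_k$-cover $V_k$ of $\mathcal{N}|_{\mathbf{Z}_n}$ in the $d_\infty$ metric, so that $|V_k|\le \mathcal{C}_\infty(\epsilon_k,\mathcal{N},n)$; because $0\in\mathcal{N}$ and $\|u\|_{L^\infty(\Omega)}\le \mathcal{B}$, the choice $V_0=\{0\}$ is admissible. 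Let $K$ be the largest integer with $\epsilon_K>\delta$, so that $\delta/2<\epsilon_{K+1}\le\delta<\epsilon_K\le 2\delta$.

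Next, for each $u\in\mathcal{N}$ I would choose $\pi_k(u)\in V_k$ to be a closest point to $u|_{\mathbf{Z}_n}$, so that $\|u|_{\mathbf{Z}_n}-\pi_k(u)\|_\infty\le\epsilon_k$. Telescoping then gives
\[
u|_{\mathbf{Z}_n} \;=\; \pi_0(u) + \sum_{k=1}^{K}\bigl(\pi_k(u)-\pi_{k-1}(u)\bigr) + \bigl(u|_{\mathbf{Z}_n}-\pi_K(u)\bigr),
\]
so the Rademacher supremum splits into $K$ chain increments plus a residual. The residual contributes at most $\epsilon_K\le 2\delta$ uniformly in $u$, because $\sup_u \tfrac{1}{n}\sum_i|u(Z_i)-\pi_K(u)_i|\le\epsilon_K$. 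For the $k$-th increment, the pair $(\pi_k(u),\pi_{k-1}(u))$ ranges over at most $|V_k|\,|V_{k-1}|\le|V_k|^2$ values, and each difference vector in $\mathbb{R}^n$ has Euclidean norm bounded by $\sqrt{n}(\epsilon_k+\epsilon_{k-1})=3\sqrt{n}\,\epsilon_k$. Applying Lemma \ref{Mfinite} to this finite set then bounds the expected supremum of the $k$-th increment by a constant multiple of $\epsilon_k\sqrt{\log(2\mathcal{C}_\infty(\epsilon_k,\mathcal{N},n))}/\sqrt{n}$.

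Finally, I would convert the dyadic sum into the advertised integral. Using $\epsilon_k=2(\epsilon_k-\epsilon_{k+1})$ together with the monotonicity of $\epsilon\mapsto\sqrt{\log(2\mathcal{C}_\infty(\epsilon,\mathcal{N},n))}$, each dyadic term is dominated by twice the integral of that function over $[\epsilon_{k+1},\epsilon_k]$. Summing over $k=1,\dots,K$, combining with the residual bound, and taking the infimum over $\delta\in(0,\mathcal{B})$ yields the claim.

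The main obstacle will be purely bookkeeping: the factor $|V_k|^2$ inside Massart's lemma doubles the log, the factor $3$ coming from the cover-plus-cover diameter multiplies each chain increment, and the dyadic-to-integral conversion adds yet another factor of $2$, so these contributions must be balanced carefully (and the short tail interval $[\epsilon_{K+1},\delta]$ absorbed into the residual) to reproduce the stated constants $4\delta$ and $12/\sqrt{n}$. No genuinely new idea beyond classical chaining is required.
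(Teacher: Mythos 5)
Your proposal is correct and follows essentially the same chaining argument as the paper's proof: dyadic scales, minimal $d_\infty$-covers with the base cover $\{0\}$, a telescoping decomposition whose increments are controlled by Massart's finite class lemma (with the $|V_k|^2$ cardinality bound and $3\sqrt{n}\,\epsilon_k$ diameter), and the standard dyadic-to-integral conversion. The only differences are trivial index shifts in the definition of $\epsilon_k$ and the choice of the cutoff $K$, which affect nothing beyond the bookkeeping you already flag.
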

\begin{proof}
	By definition
	\begin{equation*}
		\mathfrak{R}(\mathcal{N}) = \mathfrak{R}(\mathcal{N}|_{\boldsymbol{Z}_n}) = \mathbb{E}_{\mathbb{Z}_n} \left[\mathbb{E}_{\Sigma}\left.\left[\sup_{u\in \mathcal{N}}\frac{1}{n}\left|\sum_{i} \sigma_i u(Z_i)\right| \ \right|{\boldsymbol{Z}_n} \right] \right].
	\end{equation*}
	Thus, it suffice to show $$\mathbb{E}_{\Sigma}\left[\sup_{u\in \mathcal{N}}\frac{1}{n}\left|\sum_{i} \sigma_i u(Z_i)\right|\right]
	\leq  \inf_{0<\delta< \mathcal{B}}\left(4 \delta+\frac{12}{\sqrt{n}} \int_{\delta}^{\mathcal{B}} \sqrt{\log \mathcal{C}\left( \varepsilon, \mathcal{N}^2, n\right)}\mathrm{d}\varepsilon\right)$$ by conditioning  on $\boldsymbol{Z}_n$.
	Given an positive integer $K$, let $\varepsilon_k = 2^{-k+1}\mathcal{B}$, $k=1,...K$. Let $C_k$ be a cover of $\mathcal{N}|_{\boldsymbol{Z}_n}\subseteq\mathbb{R}^n$ whose   covering number is denoted as  $\mathcal{C}(\varepsilon_k,\mathcal{N}|_{\boldsymbol{Z}_n},d_{\infty})$. Then, by definition,
	$\forall u \in \mathcal{N},$ there $\exists$ $c^k \in C_k$ such that $$d_{\infty}(u|_{\boldsymbol{Z}_n},c^k) =\max\{|u(Z_i)-c^k_i|, i =1,...,n\}\leq \varepsilon_k, k =1,...,K.$$ Moreover, we denote
	the best approximate element of $u$ in $C_k$ with respect to $d_{\infty}$ as $c^k(u)$.
	Then, \begin{align*}
		& \mathbb{E}_{\Sigma}\left[\sup_{u\in \mathcal{N}}\frac{1}{n}\left|\sum_{i=1}^n \sigma_i u(Z_i)\right|\right]  \\
		& = \mathbb{E}_{\Sigma}\left[\sup_{u\in \mathcal{N}}\frac{1}{n}\left|\sum_{i=1}^n \sigma_i (u(Z_i)-c^K_i(u)) +\sum_{j=1}^{K-1}\sum_{i=1}^n \sigma_i(c^j_i(u)-c^{j+1}_i(u))+\sum_{i=1}^n \sigma_i c^{1}_{i}(u)\right|\right] \\
		& \leq  \mathbb{E}_{\Sigma}\left[\sup_{u\in \mathcal{N}}\frac{1}{n}\left|\sum_{i=1}^n \sigma_i (u(Z_i)-c^K_i(u))\right|\right]
		+\sum_{j=1}^{K-1} \mathbb{E}_{\Sigma}\left[\sup_{u\in \mathcal{N}}\frac{1}{n}\left|\sum_{i=1}^n \sigma_i(c^j_i(u)-c^{j+1}_i(u))\right|\right] \\
		& + \mathbb{E}_{\Sigma}\left[\sup_{u\in \mathcal{N}}\frac{1}{n}\left|\sum_{i=1}^n \sigma_i c^{1}_{i}(u)\right|\right].
	\end{align*}
	Since $0\in \mathcal{N}$, and the diameter of $\mathcal{N}$ is smaller than $\mathcal{B}$, we can choose $C_1 = \{0\}$ such that
	the third term in the above display vanishes.
	By  H\"{o}lder's inequality, we deduce that the first term can be bounded by $\varepsilon_K$ as follows.
	\begin{align*}
		& \mathbb{E}_{\Sigma}\left[\sup_{u\in \mathcal{N}}\frac{1}{n}\left|\sum_{i=1}^n \sigma_i (u(Z_i)-c^K_i(u))\right|\right]   \\
		& \leq \mathbb{E}_{\Sigma}\left[\sup_{u\in \mathcal{N}}\frac{1}{n}\left(\sum_{i=1}^n|\sigma_i|\right)\left(\sum_{i=1}^n\max_{i=1,...,n}\left\{\left|u(Z_i)-c^K_i(u)\right|\right\}\right)\right] \\
		& \leq \varepsilon_K.
	\end{align*}
	Let $V_j = \{c^j(u)-c^{j+1}(u): u\in \mathcal{N}\}$. Then by definition, the number of elements in $V_j$ and $C_j$ satisfying
	$$|V_j|\leq |C_j||C_{j+1}|\leq |C_{j+1}|^2.$$ And the  diameter of $V_j$ denoted as $D_j$ can be bounded as
	\begin{align*}
		& D_j = \sup_{v\in V_j}\|v\|_2  \leq \sqrt{n}\sup_{u\in \mathcal{N}} \|c^j(u)-c^{j+1}(u)\|_{\infty} \\
		& \leq \sqrt{n}\sup_{u\in \mathcal{N}} \|c^j(u)-u\|_{\infty} + \|u-c^{j+1}(u)\|_{\infty}            \\
		& \leq \sqrt{n}(\varepsilon_j + \varepsilon_{j+1})                                                  \\
		& \leq 3\sqrt{n} \varepsilon_{j+1}.
	\end{align*}
	Then,
	\begin{align*}
		& \mathbb{E}_{\Sigma}[\sup_{u\in \mathcal{N}}\frac{1}{n}|\sum_{j=1}^{K-1}\sum_{i=1}^n \sigma_i(c^j_i(u)-c^{j+1}_i(u))|]\leq \sum_{j=1}^{K-1} \mathbb{E}_{\Sigma}[\sup_{v\in V_j}\frac{1}{n}|\sum_{i=1}^n \sigma_iv_j|] \\
		& \leq \sum_{j=1}^{K-1} \frac{D_j}{n}\sqrt{2\log (2|V_j|)}                                                                                                                                                             \\
		& \leq \sum_{j=1}^{K-1}  \frac{6\varepsilon_{j+1}}{\sqrt{n}}\sqrt{\log (2|C_{j+1}|)},
	\end{align*}
	where we use triangle inequality in the  first inequality, and use Lemma \ref{Mfinite} in the second
	inequality.
	Putting all the above estimates together, we get
	\begin{align*}
		& \mathbb{E}_{\Sigma}\left[\sup_{u\in \mathcal{N}}\frac{1}{n}\left|\sum_{i} \sigma_i u(Z_i)\right|\right]\leq  \varepsilon_K + \sum_{j=1}^{K-1}  \frac{6\varepsilon_{j+1}}{\sqrt{n}}\sqrt{\log (2|C_{j+1}|)} \\
		& \leq \varepsilon_K + \sum_{j=1}^{K}  \frac{12(\varepsilon_j-\varepsilon_{j+1})}{\sqrt{n}}\sqrt{\log (2 \mathcal{C}\left( \varepsilon_j, \mathcal{N}, n\right))}                      \\
		& \leq  \varepsilon_K +   \frac{12}{\sqrt{n}} \int_{\varepsilon_{K+1}}^{\mathcal{B}}\sqrt{\log (2 \mathcal{C}\left( \varepsilon, \mathcal{N}, n\right))}\mathrm{d}\varepsilon          \\
		& \leq \inf_{0<\delta<\mathcal{B}}(4 \delta+\frac{12}{\sqrt{n}} \int_{\delta}^{\mathcal{B}} \sqrt{\log (2\mathcal{C}\left( \varepsilon, \mathcal{N}, n\right))}\mathrm{d}\varepsilon).
	\end{align*}
	where, last inequality holds since for $0<\delta< \mathcal{B}$, we can choose $K$ to be the largest integer such that  $\varepsilon_{K+1} >\delta$, at this time $\varepsilon_K\leq  4 \varepsilon_{K+2} \leq 4 \delta.$
\end{proof}

\begin{definition}
Let $\mathcal{N}$ be a set of functions from $X=\Omega(\partial \Omega)$ to $\{0,1\} .$ Suppose that $S=\left\{x_{1}, x_{2}, \cdots, x_{n}\right\} \subset X .$ We say that $S$ is shattered by $\mathcal{N}$ if for any $b \in\{0,1\}^{n}$, there exists a $u \in \mathcal{N}$ satisfying
\begin{equation*}
u\left(x_{i}\right)=b_{i}, \quad i=1,2, \ldots, n.
\end{equation*}
\end{definition}

\begin{definition}
The VC-dimension of $\mathcal{N}$, denoted as $\operatorname{VCdim}(\mathcal{N})$, is defined to be the maximum cardinality among all sets shattered by $\mathcal{N}$.
\end{definition}

VC-dimension reflects the capability of a class of functions to perform binary classification of points. The larger VC-dimension is, the stronger the capability to perform binary classification is. For more diseussion of VC-dimension, readers are referred to \cite{anthony2009neural}.

For real-valued functions, we can generalize the concept of VC-dimension into pseudo-dimension \cite{anthony2009neural}.

\begin{definition}
Let $\mathcal{N}$ be a set of functions from $X$ to $\mathbb{R} .$ Suppose that $S=\left\{x_{1}, x_{2}, \cdots, x_{n}\right\} \subset$
$X .$ We say that $S$ is pseudo-shattered by $\mathcal{N}$ if there exists $y_{1}, y_{2}, \cdots, y_{n}$ such that for any $b \in\{0,1\}^{n}$, there exists a $u \in \mathcal{N}$ satisfying
$$
\operatorname{sign}\left(u\left(x_{i}\right)-y_{i}\right)=b_{i}, \quad i=1,2, \ldots, n
$$
and we say that $\left\{y_{i}\right\}_{i=1}^{n}$ witnesses the shattering.
\end{definition}

\begin{definition}
The pseudo-dimension of $\mathcal{N}$, denoted as $\operatorname{Pdim}(\mathcal{N})$, is defined to be the maximum cardinality among all sets pseudo-shattered by $\mathcal{N}$.
\end{definition}

\par The following proposition showing a relationship between uniform covering number and pseudo-dimension.
\begin{lemma}\label{lemma:staerr:cover_Pdim}
	Let $\mathcal{N}$ be a set of real functions from a domain $X$ to the bounded interval $[0, \mathcal{B}]$. Let $\varepsilon>0$. Then
	\begin{equation*}
		\mathcal{C}(\varepsilon, \mathcal{N}, n) \leq \sum_{i=1}^{\mathrm{Pdim}(\mathcal{N})}
		\begin{pmatrix}
			n \\
			i
		\end{pmatrix}
		\left(\frac{\mathcal{B}}{\varepsilon}\right)^{i}
	\end{equation*}
	which is less than $\left(\frac{en\mathcal{B}}{\varepsilon\cdot\mathrm{Pdim}(\mathcal{N})}\right)^{\mathrm{Pdim}(\mathcal{N})}$ for $n\geq\mathrm{Pdim}(\mathcal{N})$.
\end{lemma}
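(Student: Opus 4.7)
The plan is to prove this classical bound by a quantization argument combined with a Sauer--Shelah-type inequality adapted to the pseudo-dimension setting. First, I would fix an arbitrary sample $\mathbf{Z}_n=(Z_1,\dots,Z_n)\in X^n$ realizing the maximum in the definition of $\mathcal{C}_\infty(\varepsilon,\mathcal{N},n)$, and reduce the problem to counting the number of equivalence classes of $\mathcal{N}|_{\mathbf{Z}_n}$ under an $\varepsilon$-discretization of the range $[0,\mathcal{B}]$.

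Concretely, set $K=\lceil \mathcal{B}/\varepsilon\rceil$ and partition $[0,\mathcal{B}]$ into the $K$ half-open intervals $I_k=[(k-1)\varepsilon,k\varepsilon)$, $k=1,\dots,K$. For each $u\in\mathcal{N}$, define the digitized vector $q(u)\in\{1,\dots,K\}^n$ by letting $q(u)_i$ be the index $k$ for which $u(Z_i)\in I_k$. Any two functions with $q(u)=q(v)$ differ by at most $\varepsilon$ on every $Z_i$, so a set of representatives, one per distinct value of $q(u)$, is an $\varepsilon$-cover of $\mathcal{N}|_{\mathbf{Z}_n}$ in $d_\infty$. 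Therefore it suffices to upper-bound the cardinality of $Q:=\{q(u):u\in\mathcal{N}\}\subset\{1,\dots,K\}^n$.

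To bound $|Q|$ I invoke the Sauer--Shelah lemma for pseudo-dimension: if $\mathcal{F}$ is any family of functions from $\{1,\dots,n\}$ into $\{1,\dots,K\}$ whose pseudo-dimension is at most $d$, then $|\mathcal{F}|\leq\sum_{i=0}^{d}\binom{n}{i}K^{i}$. This is proved via the standard lifting trick: associate to each $u\in\mathcal{N}$ and each threshold $y\in\mathbb{R}$ the binary function $(x,y)\mapsto \mathbf{1}\{u(x)>y\}$; the VC-dimension of this lifted class equals $\mathrm{Pdim}(\mathcal{N})$ by definition, and the classical Sauer--Shelah inequality applied on the ``doubled'' sample of pairs $(Z_i,(k-1/2)\varepsilon)$ yields the stated multi-valued bound. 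Since $q$ inherits pseudo-dimension at most $\mathrm{Pdim}(\mathcal{N})$ from $\mathcal{N}$, plugging in $K\leq\mathcal{B}/\varepsilon$ (after dropping the harmless $i=0$ term, which contributes the single empty cover) gives
\begin{equation*}
\mathcal{C}(\varepsilon,\mathcal{N},n)\leq|Q|\leq\sum_{i=1}^{\mathrm{Pdim}(\mathcal{N})}\binom{n}{i}\left(\frac{\mathcal{B}}{\varepsilon}\right)^{i}.
\end{equation*}

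For the second, cleaner bound I would use the elementary inequality $\binom{n}{i}\leq(en/i)^{i}$ valid for $1\leq i\leq n$, together with the fact that when $n\geq\mathrm{Pdim}(\mathcal{N})=:d$ and $\mathcal{B}/\varepsilon\geq 1$ the summands $\binom{n}{i}(\mathcal{B}/\varepsilon)^{i}$ are increasing in $i$, so the whole sum is at most $d\binom{n}{d}(\mathcal{B}/\varepsilon)^{d}\leq(en\mathcal{B}/(\varepsilon d))^{d}$ after absorbing the factor $d$ into the base. The main technical step is the multi-valued Sauer--Shelah bound via the threshold lift; the quantization and the final algebraic simplification are routine once that lemma is in hand.
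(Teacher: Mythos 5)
The paper does not actually prove this lemma; it simply cites Theorem 12.2 of Anthony and Bartlett. Your proposal follows the architecture of that textbook proof (quantize the range into $\varepsilon$-cells, then count quantized patterns via a Sauer--Shelah-type bound for pseudo-dimension), so the overall plan is sound. However, the key combinatorial step is not correctly justified. The threshold-lifting trick you invoke --- applying the classical binary Sauer--Shelah lemma to the subgraph class on the $nK$ pairs $(Z_i,(k-\tfrac12)\varepsilon)$ --- yields
\begin{equation*}
|Q|\;\leq\;\sum_{j=0}^{d}\binom{nK}{j},\qquad d=\mathrm{Pdim}(\mathcal{N}),\ K=\lceil \mathcal{B}/\varepsilon\rceil,
\end{equation*}
and since $nK-j\geq (n-j)K$ for $K\geq1$ one has $\binom{nK}{j}\geq\binom{n}{j}K^{j}$, so this is strictly \emph{weaker} than the claimed $\sum_{i}\binom{n}{i}(\mathcal{B}/\varepsilon)^{i}$. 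The refined multi-valued bound $|\mathcal{F}|\leq\sum_{i=0}^{d}\binom{n}{i}K^{i}$ for $\{1,\dots,K\}$-valued classes of pseudo-dimension $d$ is the Haussler--Long generalization of Sauer's lemma, which requires a genuinely different argument (a shifting/induction on the quantized class), not a reduction to the binary case on a blown-up sample. As written, your proof establishes only a bound of order $\left(en\lceil\mathcal{B}/\varepsilon\rceil/d\right)^{d}$, which would suffice for the paper's downstream statistical-error estimates up to constants in the exponent base, but does not prove the lemma as stated.

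A secondary slip occurs in the final algebra: the summands $\binom{n}{i}(\mathcal{B}/\varepsilon)^{i}$ need not be increasing in $i$ (e.g.\ when $d$ is close to $n$ and $\mathcal{B}/\varepsilon$ is close to $1$), and even granting that, bounding the sum by $d\binom{n}{d}(\mathcal{B}/\varepsilon)^{d}$ leaves an extra factor of $d$ that cannot simply be ``absorbed'': one would need $d\,\binom{n}{d}\leq (en/d)^{d}$, which fails for moderately large $d$. The standard route is $\sum_{i=0}^{d}\binom{n}{i}x^{i}\leq x^{d}\sum_{i=0}^{d}\binom{n}{i}\leq x^{d}(en/d)^{d}$ for $x\geq1$ and $n\geq d$, which gives the stated conclusion directly.
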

\begin{proof}
	See Theorem 12.2 in \cite{anthony2009neural}.
\end{proof}

\par We now present the bound of pseudo-dimension for the $\mathcal{N}^{2}$ and $\mathcal{N}^{1,2}$.
\begin{lemma}\label{lemma:covering_number}
	Let $p_1,\cdots,p_m$ be polynomials with $n$ variables  of degree at most $d$. If $n\leq m$, then
	\begin{equation*}
		|\{(\operatorname{sign}(p_1(x)),\cdots,\operatorname{sign}(p_m(x))):x\in\mathbb{R}^n\}| \leq 2\left(\frac{2emd}{n}\right)^n
	\end{equation*}
\end{lemma}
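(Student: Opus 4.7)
The plan is to interpret the count of sign patterns as a count of connected regions of $\mathbb{R}^n$ cut out by the real algebraic hypersurfaces $\{p_i = 0\}$, and then to invoke a classical Warren--Milnor--Thom estimate. The argument naturally splits into three steps.

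First, I would reduce to strict sign patterns by a perturbation argument. For a generic small $\varepsilon > 0$, the sign vector $(\operatorname{sign}(p_1(x)), \ldots, \operatorname{sign}(p_m(x))) \in \{-1,0,+1\}^m$ at any $x$ is uniquely recoverable from the $2m$ strict sign values $\operatorname{sign}(p_i(x) \pm \varepsilon) \in \{-1,+1\}$, because the inequalities $p_i(x) - \varepsilon < 0 < p_i(x) + \varepsilon$ single out the zero sign while the other two cases are determined by a single shifted polynomial. It therefore suffices, at the cost of at most a constant factor, to bound the number of strict sign patterns realized on $\mathbb{R}^n$ by the $2m$ perturbed polynomials $\{p_i \pm \varepsilon\}$, which still have degree $\leq d$.

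Second, any strict sign vector is a locally constant function of $x$ on the open semialgebraic set $U = \{x \in \mathbb{R}^n : (p_i \pm \varepsilon)(x) \neq 0 \text{ for all } i\}$, hence constant on every connected component of $U$. Consequently, the number of strict sign patterns is at most the number of connected components of $U$.

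Third, I would invoke the Warren--Milnor--Thom inequality: for $M$ polynomials of degree at most $d$ in $n$ variables with $n \leq M$, the number of connected components of the complement of their common zero locus is at most $\sum_{k=0}^n \binom{M}{k}(2d)^k$. Setting $M = 2m$ and combining the elementary estimate $\binom{M}{k} \leq (eM/k)^k$ with the hypothesis $n \leq m$ collapses this sum to $C(2emd/n)^n$ for a mild constant, which after absorbing the factor from the first step yields the stated bound $2(2emd/n)^n$. The only genuinely nontrivial ingredient is the Warren--Milnor--Thom inequality itself, whose proof relies on Morse-theoretic counting of critical points on real algebraic sets; I would simply cite it (see, e.g., Theorem 8.3 in \cite{anthony2009neural}) rather than reprove it, and this is where I expect the main technical difficulty of the result to lie. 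The remaining bookkeeping -- the choice of $\varepsilon$ to avoid degenerate configurations and the tracking of the two multiplicative constants -- is routine.
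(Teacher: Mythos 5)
The paper's own ``proof'' of this lemma is a one-line citation to Theorem~8.3 of \cite{anthony2009neural}, which is verbatim the statement being proved (with $\operatorname{sign}$ understood as the binary indicator of $\geq 0$ versus $<0$, consistent with the paper's definition of pseudo-shattering). Since you also end by citing that same theorem for ``the only genuinely nontrivial ingredient,'' your proposal and the paper ultimately rest on the identical reference, and your three-step outline (perturb to strict signs, pass to connected components, apply a Warren--Milnor--Thom count) is indeed the skeleton of the proof given there. Note, though, that Theorem~8.3 of \cite{anthony2009neural} is the sign-vector count itself, not the connected-component inequality; the latter is their Theorem~8.2 (Warren's theorem), so your citation points at the wrong ingredient.

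Taken as a self-contained argument, your sketch has two concrete defects. First, the pointwise claim in step one is false: for a fixed $\varepsilon>0$ and a point $x$ with $0<p_i(x)<\varepsilon$ one has $p_i(x)-\varepsilon<0<p_i(x)+\varepsilon$, so your decoding rule reports ``zero'' while the true sign is $+1$. The correct reduction first fixes one representative point per realized sign vector (a finite set) and then chooses $\varepsilon$ smaller than the least nonzero $|p_i|$ over those representatives. Second, and more substantively, doubling the family to the $2m$ polynomials $p_i\pm\varepsilon$ costs a factor $2^n$, not a constant: the Warren-type sum with $2m$ polynomials gives $(2d)^n\sum_{k=0}^{n}\binom{2m}{k}\leq(2d)^n\left(2em/n\right)^n=2^n\left(2emd/n\right)^n$, which does not collapse to the stated $2\left(2emd/n\right)^n$. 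The sharp constant is obtained precisely because $\operatorname{sign}$ is binary here, so a single one-sided shift $p_i+\varepsilon/2$ (only $m$ polynomials) already separates the sign classes at the representative points, and $2(2d)^n\sum_{k=0}^{n}\binom{m}{k}\leq 2\left(2emd/n\right)^n$ follows. Your ternary route proves only the weaker bound with $4emd/n$ in place of $2emd/n$ --- which would still suffice for the pseudo-dimension estimate in Lemma~\ref{lemma:staerr:pdim} up to constants, but is not the inequality as stated.
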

\begin{proof}
	See Theorem 8.3 in \cite{anthony2009neural}.
\end{proof}

\begin{lemma}\label{lemma:staerr:pdim}
	Let $\mathcal{N}$ be a set of functions that
	\begin{itemize}
		\item[(i)] can be implemented by a neural network with depth no more than $\mathcal{D}$ and width no more than $\mathcal{W}$, and
		\item[(ii)] the activation function in each unit be the $\mathrm{ReLU}$  or the $\mathrm{ReLU}^{2}$.
	\end{itemize}
	Then
	\begin{equation*}
		\operatorname{Pdim}(\mathcal{N})=\mathcal{O}(\mathcal{D}^2\mathcal{W}^2(\mathcal{D}+\log\mathcal{W})).
	\end{equation*}
\end{lemma}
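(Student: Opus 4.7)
The plan is a parameter-space partitioning argument in the style of Bartlett--Harvey--Liaw--Mehrabian, adapted to accommodate both $\mathrm{ReLU}$ and $\mathrm{ReLU}^{2}$ units. I identify each $u_{\theta}\in\mathcal{N}$ with its parameter vector $\theta\in\mathbb{R}^{U}$, where the total number of weights and biases satisfies $U=\mathcal{O}(\mathcal{D}\mathcal{W}^{2})$. Fix $n$ inputs $x_{1},\dots,x_{n}$ and thresholds $y_{1},\dots,y_{n}$. To upper bound $\mathrm{Pdim}(\mathcal{N})$ it suffices to show that the set of realized sign patterns
\[
S_{n}:=\bigl\{\bigl(\operatorname{sign}(u_{\theta}(x_{i})-y_{i})\bigr)_{i=1}^{n}:\theta\in\mathbb{R}^{U}\bigr\}
\]
has cardinality strictly less than $2^{n}$ once $n$ exceeds the claimed bound.

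The partition is built inductively over layers. Assume after $\ell$ layers we have partitioned $\mathbb{R}^{U}$ into regions $R_{1}^{(\ell)},\dots,R_{M_{\ell}}^{(\ell)}$ on each of which every pre-activation and post-activation at layers $1,\dots,\ell$, evaluated at every $x_{i}$, agrees with a fixed polynomial in $\theta$ of degree at most $d_{\ell}$. Passing to layer $\ell+1$, each of the $n\mathcal{W}$ pre-activations (one per unit and per sample) is, on $R_{k}^{(\ell)}$, a polynomial in $\theta$ of degree $d_{\ell}+1$; its sign selects which branch of $\mathrm{ReLU}$ or $\mathrm{ReLU}^{2}$ is active. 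Applying Lemma~\ref{lemma:covering_number} to these polynomials refines $R_{k}^{(\ell)}$ into at most $2\bigl(2en\mathcal{W}(d_{\ell}+1)/U\bigr)^{U}$ sub-regions. After the nonlinearity, the post-activation degree satisfies $d_{\ell+1}\leq 2(d_{\ell}+1)$, whence $d_{\ell}\leq 2^{\ell+1}$. One final application of Lemma~\ref{lemma:covering_number} to the $n$ polynomials $u_{\theta}(x_{i})-y_{i}$ of degree at most $d_{\mathcal{D}}\leq 2^{\mathcal{D}+1}$ bounds the sign patterns inside each final region.

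Multiplying all the bounds and taking logarithms gives
\[
\log_{2}|S_{n}|\ \leq\ (\mathcal{D}+1)+U\sum_{\ell=1}^{\mathcal{D}+1}\log_{2}\!\bigl(2en\mathcal{W}\,2^{\ell}/U\bigr)\ =\ \mathcal{O}\bigl(U\mathcal{D}\log(n\mathcal{W})+U\mathcal{D}^{2}\bigr).
\]
Demanding $2^{n}\leq|S_{n}|$ yields $n\leq\mathcal{O}(U\mathcal{D}\log(n\mathcal{W})+U\mathcal{D}^{2})$, and the standard transcendental inversion $n\leq a+b\log n\Rightarrow n=\mathcal{O}(a+b\log b)$ combined with $U=\mathcal{O}(\mathcal{D}\mathcal{W}^{2})$ delivers the desired $\mathrm{Pdim}(\mathcal{N})=\mathcal{O}(\mathcal{D}^{2}\mathcal{W}^{2}(\mathcal{D}+\log\mathcal{W}))$.

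The main obstacle --- and the source of the extra $\mathcal{D}$ factor relative to the pure $\mathrm{ReLU}$ bound $\mathcal{O}(\mathcal{D}\mathcal{W}^{2}\log(\mathcal{D}\mathcal{W}))$ --- is the degree doubling across each $\mathrm{ReLU}^{2}$ layer. Fortunately the contribution $\log d_{\ell}\leq (\ell+1)\log 2$ sits \emph{inside} the logarithm in Lemma~\ref{lemma:covering_number}, so the exponential growth of $d_{\ell}$ only produces a linear-in-$\mathcal{D}$ sum. The other point requiring care is that the polynomial structure must be preserved separately for every input $x_{i}$ on every region, so each layer contributes $n\mathcal{W}$ sign conditions rather than $\mathcal{W}$, explaining the appearance of $n$ inside the logarithm; verifying that the mixed $\mathrm{ReLU}/\mathrm{ReLU}^{2}$ architecture does not inflate the degree recursion beyond $d_{\ell+1}\leq 2(d_{\ell}+1)$ is the one place the bookkeeping must be done explicitly.
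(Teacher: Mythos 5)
Your proposal is correct and follows essentially the same route as the paper: both are the Bartlett--Harvey--Liaw--Mehrabian parameter-space partition argument, converting pseudo-dimension to a sign-pattern count, refining the partition layer by layer via Lemma~\ref{lemma:covering_number}, tracking the (exponentially growing, but only logarithmically contributing) polynomial degree across $\mathrm{ReLU}^2$ layers, and inverting $n\leq a+b\log n$ at the end. The only difference is presentational --- you carry out the induction and degree recursion explicitly where the paper defers those details to \cite{bartlett2019nearly}.
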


\begin{proof}
	The argument is follows from the  proof of Theorem 6 in \cite{bartlett2019nearly}.  The result stated here is somewhat stronger then Theorem 6 in \cite{bartlett2019nearly} since $\mathrm{VCdim}(\operatorname{sign}(\mathcal{N}))\leq \mathrm{Pdim}(\mathcal{N})$.
	
	We consider a new set of functions:
	\begin{equation*}
		\mathcal{\widetilde{N}}=\{\widetilde{u}(x,y)=\operatorname{sign}(u(x)-y): u\in\mathcal{H}\}
	\end{equation*}
	It is clear that $\mathrm{Pdim}(\mathcal{N})\leq\mathrm{VCdim}(\mathcal{\widetilde{N}})$. We now bound the VC-dimension of $\mathcal{\widetilde{N}}$. Denoting $\mathcal{M}$ as the total number of parameters(weights and biases) in the neural network implementing functions in $\mathcal{N}$, in our case we want to derive the uniform bound for
	\begin{equation*}
		K_{\{x_i\},\{y_i\}}(m):=|\{(\operatorname{sign}(f(x_{1}, a)-y_1), \ldots, \operatorname{sign}(u(x_{m}, a)-y_m)): a \in \mathbb{R}^{\mathcal{M}}\}|
	\end{equation*}
	over all $\{x_i\}_{i=1}^{m}\subset X$ and $\{y_i\}_{i=1}^{m}\subset\mathbb{R}$. Actually the maximum of $K_{\{x_i\},\{y_i\}}(m)$ over all $\{x_i\}_{i=1}^{m}\subset X$ and $\{y_i\}_{i=1}^{m}\subset\mathbb{R}$ is the growth function $\mathcal{G}_{\mathcal{\widetilde{N}}}(m)$.
	In order to apply Lemma \ref{lemma:covering_number}, we partition the parameter space $\mathbb{R}^{\mathcal{M}}$ into several subsets to ensure that in each subset $u(x_i,a)-y_i$ is a polynomial with respcet to $a$ without any breakpoints. In fact, our partition is exactly the same as the partition in \cite{bartlett2019nearly}. Denote the partition as $\{P_1,P_2,\cdots,P_N\}$ with some integer $N$ satisfying
	\begin{equation} \label{pdimb1}
		N\leq\prod_{i=1}^{\mathcal{D}-1}2\left(\frac{2emk_i(1+(i-1)2^{i-1})}{\mathcal{M}_i}\right)^{\mathcal{M}_i}
	\end{equation}
	where $k_i$ and $\mathcal{M}_i$ denotes the number of units at the $i$th layer and the total number of parameters at the inputs to units in all the layers up to layer $i$ of the neural network implementing functions in $\mathcal{N}$, respectively. See \cite{bartlett2019nearly} for the construction of the partition. Obviously we have
	\begin{equation} \label{pdimb2}
		K_{\{x_i\},\{y_i\}}(m)\leq\sum_{i=1}^{N}|\{(\operatorname{sign}(u(x_{1}, a)-y_1), \cdots, \operatorname{sign}(u(x_{m}, a)-y_m)): a\in P_i\}|
	\end{equation}
	Note that $u(x_i,a)-y_i$ is a polynomial with respect to $a$ with degree the same as the degree of $u(x_i,a)$, which is equal to $1 + (\mathcal{D}-1)2^{\mathcal{D}-1}$ as shown in \cite{bartlett2019nearly}.  Hence by Lemma $\ref{lemma:covering_number}$, we have
	\begin{align}
		& |\{(\operatorname{sign}(u(x_{1}, a)-y_1), \cdots, \operatorname{sign}(u(x_{m}, a)-y_m)): a\in P_i\}|\nonumber                          \\
		& \leq2\left(\frac{2em(1+(\mathcal{D}-1)2^{\mathcal{D}-1})}{\mathcal{M}_{\mathcal{D}}}\right)^{\mathcal{M}_{\mathcal{D}}}\label{pdimb3}.
	\end{align}
	Combining $(\ref{pdimb1}), (\ref{pdimb2}), (\ref{pdimb3})$ yields
	\begin{equation*}
		K_{\{x_i\},\{y_i\}}(m)\leq\prod_{i=1}^{\mathcal{D}}2\left(\frac{2emk_i(1+(i-1)2^{i-1})}{\mathcal{M}_i}\right)^{\mathcal{M}_i}.
	\end{equation*}
	We then have
	\begin{equation*}
		\mathcal{G}_{\mathcal{\widetilde{N}}}(m)\leq\prod_{i=1}^{\mathcal{D}}2\left(\frac{2emk_i(1+(i-1)2^{i-1})}{\mathcal{M}_i}\right)^{\mathcal{M}_i},
	\end{equation*}
	since the maximum of $K_{\{x_i\},\{y_i\}}(m)$ over all $\{x_i\}_{i=1}^{m}\subset X$ and $\{y_i\}_{i=1}^{m}\subset\mathbb{R}$ is the growth function $\mathcal{G}_{\mathcal{\widetilde{N}}}(m)$. Some algebras  as that of  the proof of Theorem 6 in \cite{bartlett2019nearly}, we obtain
	\begin{equation*}
		\mathrm{Pdim}(\mathcal{N})\leq\mathcal{O}\left(\mathcal{D}^2\mathcal{W}^2 \log \mathcal{U}+\mathcal{D}^3\mathcal{W}^2\right)
		=\mathcal{O}\left(\mathcal{D}^2\mathcal{W}^2\left( \mathcal{D}+\log \mathcal{W}\right)\right)
	\end{equation*}
	where $\mathcal{U}$ refers to the number of units of the neural network implementing functions in $\mathcal{N}$.
\end{proof}

\par With the help of above preparations, the statistical error can easily be bounded by a tedious calculation.

\begin{theorem}\label{thm:staerr}
	Let $\mathcal{D}$ and $\mathcal{W}$ be the depth and width of the network respectively, then
	\begin{equation*}
		\begin{aligned}
			\mathcal{E}_{sta} \leq & C_{c_3}d(\mathcal{D}+3)(\mathcal{D}+2)\mathcal{W}\sqrt{\mathcal{D}+3+\log (d(\mathcal{D}+2)\mathcal{W})}\left(\frac{\log n}{n}\right)^{1/2} \\
			& +C_{c_3}d\mathcal{D}\mathcal{W}\sqrt{\mathcal{D}+\log\mathcal{W}}\left(\frac{\log n}{n}\right)^{1/2}\lambda.
		\end{aligned}
	\end{equation*}
	where $n$ is the number of training samples on  both the domain and the boundary.
\end{theorem}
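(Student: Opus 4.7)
The plan is to chain together the four auxiliary results established earlier in this subsection. I would first apply Lemma \ref{lemma:staerr:dec} to split $\mathcal{E}_{sta}=2\sup_{u\in\mathcal{N}^2}|\mathcal{L}_\lambda(u)-\widehat{\mathcal{L}}_\lambda(u)|$ into the four suprema involving $\mathcal{L}_{\lambda,j}$ for $j=1,2,3,4$. By Lemma \ref{lemma:staerr:Rad234}, the three Lipschitz-in-$u$ pieces ($j=2,3,4$) are each controlled by a constant multiple (depending on $c_3$ and $|\partial\Omega|$) of $\mathfrak{R}(\mathcal{N}^2)$, while Lemma \ref{lemma:staerr:Rad1} shows that the gradient piece ($j=1$) is controlled by $\mathfrak{R}(\mathcal{N}^{1,2})$, where $\mathcal{N}^{1,2}$ is the enlarged mixed-activation class of depth $\mathcal{D}+3$ and width $d(\mathcal{D}+2)\mathcal{W}$ that realizes $\|\nabla u\|_2^2$ exactly. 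Only the $j=4$ term carries the extra factor $\lambda/2$, so it is the sole source of the $\lambda$-dependent contribution in the conclusion.

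Next, I would bound the two Rademacher complexities by a standard Dudley-plus-pseudo-dimension computation. Combining Lemma \ref{lemma:staerr:Rad_cover} with Lemma \ref{lemma:staerr:cover_Pdim} gives, for $n\ge\operatorname{Pdim}(\mathcal{N})$,
$$\mathfrak{R}(\mathcal{N})\;\le\;\inf_{0<\delta<\mathcal{B}}\left(4\delta+\frac{12}{\sqrt{n}}\int_\delta^{\mathcal{B}}\sqrt{\operatorname{Pdim}(\mathcal{N})\log\!\bigl(2en\mathcal{B}/(\varepsilon\operatorname{Pdim}(\mathcal{N}))\bigr)}\,d\varepsilon\right).$$
Taking $\delta\sim 1/n$ and evaluating the integral yields the clean estimate $\mathfrak{R}(\mathcal{N})\lesssim \mathcal{B}\sqrt{\operatorname{Pdim}(\mathcal{N})\log n/n}$. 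Feeding in the pseudo-dimension bounds from Lemma \ref{lemma:staerr:pdim}, namely $\operatorname{Pdim}(\mathcal{N}^2)=\mathcal{O}(\mathcal{D}^2\mathcal{W}^2(\mathcal{D}+\log\mathcal{W}))$ and $\operatorname{Pdim}(\mathcal{N}^{1,2})=\mathcal{O}((\mathcal{D}+3)^2(d(\mathcal{D}+2)\mathcal{W})^2(\mathcal{D}+3+\log(d(\mathcal{D}+2)\mathcal{W})))$, produces the two leading-order terms in the stated estimate once the remaining constants (powers of $c_3$, $|\Omega|$, $|\partial\Omega|$, and the boundedness $\mathcal{B}\le c_3$) are absorbed into the generic constant $C_{c_3}$.

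The whole argument is therefore essentially bookkeeping: the conceptually hard step has already been performed in Lemma \ref{lemma:staerr:Rad1}, which replaces the non-Lipschitz map $u\mapsto\|\nabla u\|_2^2$ (for which the Ledoux--Talagrand contraction of Lemma \ref{lemma:staerr:liprad} is unavailable) with an exact network representation of only modestly larger depth and width. This is what reduces the complexity of the non-Lipschitz composition to a pseudo-dimension calculation for a ReLU/ReLU$^2$ network. The only real care needed in the present theorem is to track how the enlarged depth $\mathcal{D}+3$ and width $d(\mathcal{D}+2)\mathcal{W}$ of $\mathcal{N}^{1,2}$ propagate through Lemma \ref{lemma:staerr:pdim} and the Dudley integral to produce the precise prefactor $d(\mathcal{D}+3)(\mathcal{D}+2)\mathcal{W}\sqrt{\mathcal{D}+3+\log(d(\mathcal{D}+2)\mathcal{W})}$ in the first term of the final bound, while the simpler class $\mathcal{N}^2$ contributes the $\lambda$-weighted second term.
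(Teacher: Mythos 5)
Your proposal is correct and follows essentially the same route as the paper: decompose via Lemma \ref{lemma:staerr:dec}, control the three Lipschitz terms by contraction and the gradient term by the exact $\mathrm{ReLU}$-$\mathrm{ReLU}^2$ realization of $\|\nabla u\|_2^2$, then bound both Rademacher complexities through Dudley's entropy integral and the pseudo-dimension estimates. The only cosmetic difference is your choice $\delta\sim 1/n$ in the Dudley integral versus the paper's $\delta=\mathcal{B}(\mathrm{Pdim}(\mathcal{N})/n)^{1/2}$, which yields the same $\mathcal{B}\sqrt{\mathrm{Pdim}(\mathcal{N})\log n/n}$ rate.
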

\begin{proof}
	In order to apply Lemma \ref{lemma:staerr:Rad_cover}, we need to handle the term
	\begin{equation*}
		\begin{split}
			&\frac{1}{\sqrt{n}}\int_{\delta}^{\mathcal{B}}\sqrt{\log(2\mathcal{C}(\epsilon,\mathcal{N},n))}d\epsilon \\
			&\leq \frac{\mathcal{B}}{\sqrt{n}}+ \frac{1}{\sqrt{n}}\int_{\delta}^{B}\sqrt{\log\left(\frac{en\mathcal{B}}{\epsilon\cdot\mathrm{Pdim}(\mathcal{N})}\right)^{\mathrm{Pdim}(\mathcal{N})}}d\epsilon\\
			&\leq \frac{\mathcal{B}}{\sqrt{n}}+ \left(\frac{\mathrm{Pdim}(\mathcal{N})}{n}\right)^{1/2}\int_{\delta}^{B}\sqrt{\log\left(\frac{en\mathcal{B}}{\epsilon\cdot\mathrm{Pdim}(\mathcal{N})}\right)}d\epsilon
		\end{split}
	\end{equation*}
	where in the first inequality  we use Lemma $\ref{lemma:staerr:cover_Pdim}$. Now we calculate the integral. Set
	\begin{equation*}
		t=\sqrt{\log\left(\frac{en\mathcal{B}}{\epsilon\cdot\mathrm{Pdim}(\mathcal{N})}\right)}
	\end{equation*}
	then $\epsilon=\frac{en\mathcal{B}}{\mathrm{Pdim}(\mathcal{N})}\cdot e^{-t^2}$. Denote $t_1=\sqrt{\log\left(\frac{en\mathcal{B}}{\mathcal{B}\cdot\mathrm{Pdim}(\mathcal{N})}\right)}$, $t_2=\sqrt{\log\left(\frac{en\mathcal{B}}{\delta\cdot\mathrm{Pdim}(\mathcal{N})}\right)}$. And
	\begin{equation*}
		\begin{split}
			&\int_{\delta}^{\mathcal{B}}\sqrt{\log\left(\frac{en\mathcal{B}}{\epsilon\cdot\mathrm{Pdim}(\mathcal{N})}\right)}d\epsilon
			=\frac{2en\mathcal{B}}{\mathrm{Pdim}(\mathcal{N})}\int_{t_1}^{t_2}t^2e^{-t^2}dt\\
			&=\frac{2en\mathcal{B}}{\mathrm{Pdim}(\mathcal{N})}\int_{t_1}^{t_2}t\left(\frac{-e^{-t^2}}{2}\right)'dt\\
			&=\frac{en\mathcal{B}}{\mathrm{Pdim}(\mathcal{N})}\left[t_1e^{-t_1^2}-t_2e^{-t_2^2}+\int_{t_1}^{t_2}e^{-t^2}dt\right]\\
			&\leq\frac{en\mathcal{B}}{\mathrm{Pdim}(\mathcal{N})}\left[t_1e^{-t_1^2}-t_2e^{-t_2^2}+(t_2-t_1)e^{-t_1^2}\right]\\
			&\leq\frac{en\mathcal{B}}{\mathrm{Pdim}(\mathcal{N})}\cdot t_2e^{-t_1^2}=\mathcal{B}\sqrt{\log\left(\frac{en\mathcal{B}}{\delta\cdot\mathrm{Pdim}(\mathcal{N})}\right)}
		\end{split}
	\end{equation*}
	Choosing $\delta=\mathcal{B}\left(\frac{\mathrm{Pdim}(\mathcal{N})}{n}\right)^{1/2}\leq\mathcal{B}$, by Lemma \ref{lemma:staerr:Rad_cover} and the above display, we get for both  $\mathcal{N} = \mathcal{N}^2$ and $\mathcal{N} = \mathcal{N}^{1,2}$  there holds
	\begin{align}
		& \mathfrak{R}(\mathcal{N}) \leq 4\delta+\frac{12}{\sqrt{n}}\int_{\delta}^{\mathcal{B}}\sqrt{\log(2\mathcal{C}(\epsilon,\mathcal{N},n))}d\epsilon \nonumber                                                       \\
		& \leq4\delta+  \frac{12\mathcal{B}}{\sqrt{n}}+ 12\mathcal{B}\left(\frac{\mathrm{Pdim}(\mathcal{N})}{n}\right)^{1/2}\sqrt{\log\left(\frac{en\mathcal{B}}{\delta\cdot\mathrm{Pdim}(\mathcal{N})}\right)} \nonumber \\
		& \leq28\sqrt{\frac{3}{2}}\mathcal{B}\left(\frac{\mathrm{Pdim}(\mathcal{N})}{n}\right)^{1/2}\sqrt{\log\left(\frac{en}{\mathrm{Pdim}(\mathcal{N})}\right)} \label{emperr}.
	\end{align}
	Then by Lemma \ref{lemma:staerr:dec}, \ref{lemma:staerr:Rad_cover}, \ref{lemma:staerr:Rad234}, \ref{lemma:staerr:Rad1} and equation (\ref{emperr}), we have
	\begin{align*}
		& {\mathcal{E}_{sta}}  = 2\sup_{u \in \mathcal{N}^2} |\mathcal{L}(u) - \widehat{\mathcal{L}}(u) |                                                                             \\
		& \leq 2\mathfrak{R}(\mathcal{N}^{1,2})+2(2c_{3}^{2}+2c_{3})\mathfrak{R}(\mathcal{N}^{2})+2c_{3}^{2}\mathfrak{R}(\mathcal{N}^{2})\lambda                                      \\
		& \leq
		56\sqrt{\frac{3}{2}}\mathcal{B}\left(\frac{\mathrm{Pdim}(\mathcal{N}^{1,2})}{n}\right)^{1/2}\sqrt{\log\left(\frac{en}{\mathrm{Pdim}(\mathcal{N}^{1,2})}\right)}                \\
		& +56\sqrt{\frac{3}{2}}(2c_{3}^{2}+2c_{3})\mathcal{B}\left(\frac{\mathrm{Pdim}(\mathcal{N}^2)}{n}\right)^{1/2}\sqrt{\log\left(\frac{en}{\mathrm{Pdim}(\mathcal{N}^2)}\right)} \\
		& +56\sqrt{\frac{3}{2}}c_{3}^{2}\mathcal{B}\left(\frac{\mathrm{Pdim}(\mathcal{N}^2)}{n}\right)^{1/2}\sqrt{\log\left(\frac{en}{\mathrm{Pdim}(\mathcal{N}^2)}\right)}\lambda.
	\end{align*}
	Plugging the upper bound of $\mathrm{Pdim}$ derived in Lemma \ref{lemma:staerr:cover_Pdim} into the above display  and using the relationship of depth and width between $\mathcal{N}^2$ and $\mathcal{N}^{1,2}$, we get
	\begin{equation}\label{sterrf}
		\begin{aligned}
			\mathcal{E}_{sta} \leq & C_{c_3}d(\mathcal{D}+3)(\mathcal{D}+2)\mathcal{W}\sqrt{\mathcal{D}+3+\log (d(\mathcal{D}+2)\mathcal{W})}\left(\frac{\log n}{n}\right)^{1/2} \\
			&+C_{c_3}d\mathcal{D}\mathcal{W}\sqrt{\mathcal{D}+\log\mathcal{W}}\left(\frac{\log n}{n}\right)^{1/2}\lambda.
		\end{aligned}
	\end{equation}
\end{proof}

\subsection{Error from the boundary penalty method}
\par Although the Lemma \ref{lemma:lambda_convergence} shows the convergence property of Robin problem \eqref{eq:robin} as $\lambda\rightarrow\infty$
\begin{equation*}
	u_{\lambda}^{*} \rightarrow u^{*},
\end{equation*}
it says nothing about the convergence rate. In this section, we consider the error from the boundary penalty method. Roughly speaking, we bound the distance between the minimizer $u^{*}$ and $u_{\lambda}^{*}$ with respect to the penalty parameter $\lambda$.
\begin{theorem}\label{thm:penaltyerr}
	Suppose $u_{\lambda}^{*}$ is the minimizer of \eqref{eq:energy:penalty} and $u^{*}$ is the minimizer of \eqref{eq:energy}. Then
	\begin{equation*}
		\|u_{\lambda}^{*}-u^{*}\|_{H^{1}(\Omega)}\leq C_{c_{3},d}\lambda^{-1}.
	\end{equation*}
\end{theorem}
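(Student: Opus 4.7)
My plan is to bootstrap the naive $O(\lambda^{-1/2})$ estimate one obtains by testing the equation for $v := u_\lambda^* - u^*$ against itself, upgrading it to the claimed $O(\lambda^{-1})$ rate via a one-term asymptotic correction in $\lambda^{-1}$. Concretely, I will construct an auxiliary function $\tilde u := u^* + \lambda^{-1}\psi$ for a carefully chosen $\psi \in H^1(\Omega)$ such that $\tilde u$ satisfies the interior PDE $-\Delta \tilde u + w\tilde u = f$ exactly and satisfies the Robin boundary condition up to a residual of order $\lambda^{-2}$. Comparing $u_\lambda^*$ to $\tilde u$ via an energy estimate will then cost only $O(\lambda^{-1})$ in $H^1$, while $\tilde u - u^* = \lambda^{-1}\psi$ is itself $O(\lambda^{-1})$, so the triangle inequality closes the argument.

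\paragraph{Definition of $\psi$ and the boundary residual.} I will take $\psi \in H^1(\Omega)$ to be the unique weak solution of
\begin{equation*}
-\Delta \psi + w\psi = 0 \text{ in } \Omega, \qquad \psi = -\partial_n u^*|_{\partial\Omega} \text{ on } \partial\Omega.
\end{equation*}
Because $u^* \in H^2(\Omega)$ by Lemma \ref{lemma:regularity}, the trace theorem gives $\partial_n u^*|_{\partial\Omega} \in H^{1/2}(\partial\Omega)$, so this Dirichlet problem is well-posed, and standard $H^2$ bounds for \eqref{eq:dirichlet} in terms of $\|f\|_{L^2(\Omega)}$ and $\|w\|_{L^\infty(\Omega)}$ yield $\|\psi\|_{H^1(\Omega)} \leq C_{c_3,d}$, independent of $\lambda$. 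A direct computation then verifies
\begin{equation*}
-\Delta \tilde u + w\tilde u = f \text{ in } \Omega, \qquad \lambda^{-1}\partial_n \tilde u + \tilde u = \lambda^{-2}\partial_n \psi \text{ on } \partial\Omega,
\end{equation*}
the boundary value of $\psi$ having been chosen precisely so as to cancel the leading $\lambda^{-1}\partial_n u^*$ contribution in the Robin residual.

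\paragraph{Energy estimate for $\mathfrak w := u_\lambda^* - \tilde u$ and conclusion.} Subtracting, $\mathfrak w$ weakly solves $-\Delta \mathfrak w + w\mathfrak w = 0$ in $\Omega$ with Robin data $-\lambda^{-2}\partial_n \psi$. Testing its weak formulation against $\phi = \mathfrak w$ gives
\begin{equation*}
\|\nabla \mathfrak w\|_{L^2(\Omega)}^2 + \|\sqrt w\, \mathfrak w\|_{L^2(\Omega)}^2 + \lambda\|\mathfrak w\|_{L^2(\partial\Omega)}^2 = -\lambda^{-1}\langle \partial_n \psi,\mathfrak w\rangle_{H^{-1/2}(\partial\Omega)\times H^{1/2}(\partial\Omega)}.
\end{equation*}
I will bound the right-hand side by $C\lambda^{-1}\|\psi\|_{H^1(\Omega)}\|\mathfrak w\|_{H^1(\Omega)}$ using the Neumann-trace estimate $\|\partial_n \psi\|_{H^{-1/2}(\partial\Omega)} \lesssim \|\psi\|_{H^1(\Omega)}$ (valid because $-\Delta\psi + w\psi = 0 \in L^2$) together with the $H^{1/2}$ trace inequality for $\mathfrak w$, and then invoke the Poincar\'e--Friedrichs inequality $\|u\|_{L^2(\Omega)}^2 \lesssim \|\nabla u\|_{L^2(\Omega)}^2 + \|u\|_{L^2(\partial\Omega)}^2$ to promote the left-hand side to control the full $\|\mathfrak w\|_{H^1(\Omega)}^2$ (for $\lambda \ge 1$). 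This yields $\|\mathfrak w\|_{H^1(\Omega)} \leq C\lambda^{-1}\|\psi\|_{H^1(\Omega)}$, and the triangle inequality
\begin{equation*}
\|u_\lambda^* - u^*\|_{H^1(\Omega)} \leq \|\mathfrak w\|_{H^1(\Omega)} + \lambda^{-1}\|\psi\|_{H^1(\Omega)} \leq C_{c_3,d}\,\lambda^{-1}
\end{equation*}
closes the proof.

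\paragraph{Main obstacle.} The delicate point will be handling the boundary pairing on the right-hand side: since $\psi$ solves only a Dirichlet problem with $H^{1/2}$ boundary datum, its Neumann trace $\partial_n\psi$ lies in $H^{-1/2}(\partial\Omega)$ and generally not in $L^2(\partial\Omega)$, so a direct Cauchy--Schwarz on $\partial\Omega$ is unavailable. Using the $H^{-1/2}$--$H^{1/2}$ duality pairing together with the full trace theorem is essential; it is precisely this asymmetric pairing, rather than an $L^2(\partial\Omega)$ estimate, that converts the small residual $\lambda^{-2}\partial_n\psi$ into an $H^1$ error of the sharp order $\lambda^{-1}$. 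Without it one would recover only the weaker $\lambda^{-1/2}$ rate obtained in the prior works \cite{Mller2021ErrorEF,muller2020deep,hong2021rademacher}.
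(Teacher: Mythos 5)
Your proof is correct, and it rests on the same central idea as the paper's argument --- the first-order corrector $u^{*}+\lambda^{-1}\varphi$ whose boundary trace $-\partial_n u^{*}$ cancels the leading term of the Robin residual --- but the execution is genuinely different. The paper (following Proposition 2.3 of \cite{Maury2009Numerical}) works purely variationally: it introduces $R_{\lambda}(v)=\tfrac12 a(u^{*}-v,u^{*}-v)+\tfrac{\lambda}{2}\int_{\partial\Omega}\bigl(-\lambda^{-1}\partial_n u^{*}-v\bigr)^{2}\,ds$, observes that $R_{\lambda}=\mathcal{L}_{\lambda}+\mathrm{const}$ so that $u_{\lambda}^{*}$ minimizes $R_{\lambda}$, and evaluates $R_{\lambda}$ at $w=u^{*}+\lambda^{-1}\varphi$ where $\varphi$ is merely \emph{any} $H^{1}(\Omega)$ function with trace $-\partial_n u^{*}$; this yields $R_{\lambda}(u_{\lambda}^{*})\leq R_{\lambda}(w)=\tfrac{1}{2\lambda^{2}}a(\varphi,\varphi)$, and coercivity of the left-hand side finishes the proof with no auxiliary PDE and no $H^{-1/2}$ pairing. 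You instead insist that $\psi$ solve the homogeneous equation with Dirichlet datum $-\partial_n u^{*}$, read off the Robin problem satisfied by $\mathfrak{w}=u_{\lambda}^{*}-\tilde u$, and run an energy (consistency-plus-stability) estimate in which the $O(\lambda^{-2})$ residual enters through the $H^{-1/2}(\partial\Omega)$--$H^{1/2}(\partial\Omega)$ duality. Both routes are valid and deliver the same $O(\lambda^{-1})$ rate; the paper's is shorter and demands less of the corrector (the extra requirement that $\psi$ solve a PDE is precisely what forces you into the duality pairing, which never arises for an arbitrary $H^{1}$ lifting), while yours makes the boundary consistency error explicit and generalizes more readily to nonvariational settings. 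One point where your write-up is actually more careful than the paper's: since $c_{1}$ may vanish, $a(\cdot,\cdot)$ alone is not coercive on $H^{1}(\Omega)$, and your appeal to the Friedrichs-type inequality $\|u\|_{L^{2}(\Omega)}^{2}\lesssim\|\nabla u\|_{L^{2}(\Omega)}^{2}+\|Tu\|_{L^{2}(\partial\Omega)}^{2}$ (retaining the boundary term on the left-hand side, with $\lambda\geq 1$) is the right way to obtain full $H^{1}$ control of $\mathfrak{w}$.
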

\begin{proof}
	\par Following the idea which is proposed in \cite{Maury2009Numerical} (proof of Proposition 2.3), we proceed to prove this theorem.
	For $v\in H^{1}(\Omega)$, we introduce
	\begin{equation}\label{eq:R_lambda}
		R_{\lambda}(v)=\frac{1}{2}a(u^{*}-v,u^{*}-v)+\frac{\lambda}{2}\int_{\partial\Omega}\left( -\frac{1}{\lambda}\frac{\partial u^{*}}{\partial n}-v \right)^{2}ds.
	\end{equation}
	Given $\varphi\in H^{1}(\Omega)$ such that $T\varphi=-\frac{\partial u^{*}}{\partial n}$, we set $w=\frac{1}{\lambda}\varphi+u^{*}$. Due to $u^{*}\in H^{1}_{0}(\Omega)$, it follows that
	\begin{equation}\label{eq:R_bound}
		R_{\lambda}(w) = \frac{1}{2\lambda^{2}}a\left( \varphi,\varphi \right)+\frac{\lambda}{2}\int_{\partial\Omega}(u^{*})^{2}ds = \frac{1}{2\lambda^{2}}a\left( \varphi,\varphi \right) \leq C\lambda^{-2},
	\end{equation}
	where $C$ is dependent only on $\mathcal{B}$, $w$ and $\Omega$.
	Apparently, \eqref{eq:R_lambda} can be written
	\begin{equation*}
		\begin{aligned}
			R_{\lambda}(v)  =&\frac{1}{2}a(u^{*},u^{*})-a(u^{*},v)+\frac{1}{2}a(v,v)+\frac{1}{2\lambda}\int_{\partial\Omega}\left(\frac{\partial u^{*}}{\partial n}\right)^{2}ds+\int_{\partial\Omega}\frac{\partial u^{*}}{\partial n}vds \\
			&+\frac{\lambda}{2}\int_{\partial\Omega}v^{2}ds \\
			=&\frac{1}{2}a(u^{*},u^{*})+\frac{1}{2}a(v,v)+\frac{1}{2\lambda}\int_{\partial\Omega}\left(\frac{\partial u^{*}}{\partial n}\right)^{2}ds+\frac{\lambda}{2}\int_{\partial\Omega}v^{2}ds-\int_{\Omega}fvdx \\
			=&\frac{1}{2}a(u^{*},u^{*})+\frac{1}{2\lambda}\int_{\partial\Omega}\left(\frac{\partial u^{*}}{\partial n}\right)^{2}ds+\mathcal{L}_{\lambda}(v),
		\end{aligned}
	\end{equation*}
	where the second equality comes from that
	\begin{equation}\label{eq:variational}
		a(u^{*},v)-\int_{\partial\Omega} \frac{\partial u^{*}}{\partial n}v ds=\int_{\Omega}fvdx , \ \forall v\in H^{1}(\Omega).
	\end{equation}
	Since $R_{\lambda}(v)=\mathcal{L}_{\lambda}(v)+const$, $u_{\lambda}^{*}$ is also the minimizer of $R_{\lambda}$ over $H^{1}(\Omega)$. Recall \eqref{eq:R_bound}, we obtain the estimation of $R_{\lambda}(u_{\lambda}^{*})$
	\begin{equation*}
		0 \leq R_{\lambda}(u_{\lambda}^{*}) = \frac{1}{2}a(u^{*}-u_{\lambda}^{*},u^{*}-u_{\lambda}^{*})+\frac{\lambda}{2}\int_{\partial\Omega}\left( -\frac{1}{\lambda}\frac{\partial u^{*}}{\partial n}-u_{\lambda}^{*} \right)^{2}ds \leq R_{\lambda}(w) \leq C\lambda^{-2}.
	\end{equation*}
	Now that $a(\cdot,\cdot)$ is coercive, we arrive at
	\begin{equation*}
		\|u^{*}-u_{\lambda}^{*}\|_{H^{1}(\Omega)}\leq C_{c_{3},d}\lambda^{-1}.
	\end{equation*}
\end{proof}
In \cite{hong2021rademacher}, they proved that the error 	$\|u_{\lambda}^{*}-u^{*}\|_{H^{1}(\Omega)}\leq \mathcal{O}(\lambda^{-1/2}),$ which is suboptimal comparing with the above results derived here.
In \cite{Mller2021ErrorEF,muller2020deep}, they proved the $\mathcal{O}(\lambda^{-1})$ bound under some unverifiable conditions.
\subsection{Convergence rate}

\par Note that for $\lambda\rightarrow \infty$ the approximation error $\mathcal{E}_{app}$ and the statistical error $\mathcal{E}_{sta}$ approach $\infty$ and for $\lambda\rightarrow 0$ the error from penalty blows up. Hence, there must be a trade off for choosing proper $\lambda$.
\begin{theorem}\label{thm:convergence}
	Let $u^{*}$ be the weak solution of \eqref{eq:dirichlet} with bounded $f\in L^{2}(\Omega)$, $w\in L^{\infty}(\Omega)$. $\widehat{u}_{\phi}$ is the minimizer of the discrete version of the associated Robin energy with parameter $\lambda$. Given $n$ be the number of training samples on the domain and the boundary, there is a $\mathrm{ReLU}^2$ network with depth and width as
	\begin{equation*}
		\mathcal{D} \leq \lceil\log_2d\rceil+3, \quad \mathcal{W}\leq \mathcal{O}\left(4d\left \lceil \left(\frac{n}{\log n}\right)^{\frac{1}{2(d+2)}}-4\right \rceil^d \right),
	\end{equation*}
	such that
	\begin{equation*}
		\begin{aligned}
			\mathbb{E}_{\boldsymbol{X},\boldsymbol{Y}}\left[\|\widehat{u}_{\phi}-u^{*}\|_{H^{1}(\Omega)}^2\right]
			\leq & C_{c_1,c_2,c_3,d} {\mathcal{O}} \left(n^{-\frac{1}{d+2}}(\log n)^{\frac{d+3}{d+2}}\right)  \\
			& +C_{c_1,c_2,c_3,d} {\mathcal{O}} \left(n^{-\frac{1}{d+2}}(\log n)^{\frac{d+3}{d+2}}\right)\lambda \\
			& + C_{c_{3},d}\lambda^{-2}.
		\end{aligned}
	\end{equation*}
	Furthermore, for
	\begin{equation*}
		\lambda \sim n^{\frac{1}{3(d+2)}}(\log n)^{-\frac{d+3}{3(d+2)}},
	\end{equation*}
	it holds that
	\begin{equation*}
		\mathbb{E}_{\boldsymbol{X},\boldsymbol{Y}}\left[\|\widehat{u}_{\phi}-u^{*}\|_{H^{1}(\Omega)}^2\right]
		\leq C_{c_1,c_2,c_3,d} {\mathcal{O}} \left(n^{-\frac{2}{3(d+2)}}\log n\right).
	\end{equation*}
\end{theorem}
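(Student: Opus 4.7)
The plan is to assemble the four ingredients already proved in the excerpt --- the error decomposition of Theorem \ref{thm:errdec}, the approximation bound of Theorem \ref{thm:apperr}, the statistical bound of Theorem \ref{thm:staerr}, and the penalty bound of Theorem \ref{thm:penaltyerr} --- and then tune the two remaining degrees of freedom (the approximation resolution $\epsilon$, which dictates the width, and the boundary weight $\lambda$) so as to minimise the resulting aggregate bound.

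First I would take expectation over the training samples in Theorem \ref{thm:errdec}; under the paper's standing assumption of perfect training ($\mathcal{E}_{opt}=0$) this collapses to
\begin{equation*}
\mathbb{E}_{\boldsymbol{X},\boldsymbol{Y}}\bigl[\|\widehat{u}_\phi-u^*\|_{H^1(\Omega)}^2\bigr] \;\leq\; \frac{4}{c_1\wedge 1}\bigl\{\mathcal{E}_{app}+\mathbb{E}_{\boldsymbol{X},\boldsymbol{Y}}[\mathcal{E}_{sta}]\bigr\}+2\,\mathcal{E}_{pen}.
\end{equation*}
Theorem \ref{thm:apperr} with parameter $\epsilon$ then yields $\mathcal{E}_{app}\leq C(1+\lambda)\epsilon^2$ at the price of width $\mathcal{W}\leq 4d\lceil Cc_2/\epsilon-4\rceil^d$ and depth $\mathcal{D}\leq\lceil\log_2 d\rceil+3$; Theorem \ref{thm:staerr} gives $\mathbb{E}[\mathcal{E}_{sta}]\leq C_{c_3,d}(1+\lambda)\mathcal{W}\sqrt{\log\mathcal{W}}\,(\log n/n)^{1/2}$, since the depth is constant in $d$ and can be absorbed; and Theorem \ref{thm:penaltyerr} directly supplies $\mathcal{E}_{pen}\leq C_{c_3,d}^2\lambda^{-2}$.

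Next I would balance approximation against statistics by tuning the width. The trade-off $\epsilon^2\sim\mathcal{W}^{-2/d}$ versus $\mathcal{W}(\log n/n)^{1/2}$ is optimised by $\mathcal{W}\sim (n/\log n)^{d/(2(d+2))}$, equivalently $1/\epsilon\sim(n/\log n)^{1/(2(d+2))}$, which is precisely the ceiling appearing in the statement of the theorem. Back-substituting yields
\begin{equation*}
\mathcal{E}_{app}+\mathbb{E}[\mathcal{E}_{sta}] \;\leq\; C_{c_1,c_2,c_3,d}(1+\lambda)\,n^{-1/(d+2)}(\log n)^{(d+3)/(d+2)},
\end{equation*}
where the $(\log n)^{(d+3)/(d+2)}$ factor collects the $\sqrt{\mathcal{D}+\log\mathcal{W}}$ contribution together with the $(\log n/n)^{1/2}$ from Theorem \ref{thm:staerr}. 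Combining with $\mathcal{E}_{pen}\leq C\lambda^{-2}$ gives the first displayed inequality of the theorem.

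Finally, I would choose $\lambda$ to balance the linear-in-$\lambda$ term with $\lambda^{-2}$: setting $\lambda\cdot n^{-1/(d+2)}(\log n)^{(d+3)/(d+2)}\sim \lambda^{-2}$ produces $\lambda\sim n^{1/(3(d+2))}(\log n)^{-(d+3)/(3(d+2))}$, and plugging back in yields the advertised $n^{-2/(3(d+2))}(\log n)^{2(d+3)/(3(d+2))}$ rate, which is at most $n^{-2/(3(d+2))}\log n$ since the log exponent $2(d+3)/(3(d+2))$ stays below $1$ for every $d\geq 1$. No delicate analytic step is expected here --- the whole argument is a bookkeeping exercise layered on top of the three previously established theorems. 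The only moderately subtle point, and the closest thing to an obstacle, is verifying that the chosen $\lambda$ is at least order one so that the $(1+\lambda)$ prefactor in the approximation/statistical bound can legitimately be replaced by $\lambda$ when balancing against $\lambda^{-2}$; this is automatic once $n$ is large enough, and the implicit constants remain dependent only on $c_1,c_2,c_3,d$.
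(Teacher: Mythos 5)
Your proposal is correct and follows essentially the same route as the paper: it combines Theorems \ref{thm:errdec}, \ref{thm:apperr}, \ref{thm:staerr} and \ref{thm:penaltyerr}, balances approximation against statistical error via $\varepsilon^2\sim(\log n/n)^{1/(d+2)}$ (equivalently $\mathcal{W}\sim(n/\log n)^{d/(2(d+2))}$), and then balances the linear-in-$\lambda$ term against $\lambda^{-2}$ to get $\lambda\sim n^{1/(3(d+2))}(\log n)^{-(d+3)/(3(d+2))}$, exactly as in the paper's proof. The closing observation that $2(d+3)/(3(d+2))\leq 1$ justifies the final $\log n$ factor and matches the paper's stated rate.
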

\begin{proof}
	Combining Theorem \ref{thm:errdec}, Theorem \ref{thm:apperr} and Theorem \ref{thm:staerr}, we obtain by taking $\varepsilon^{2} = C_{c_1,c_2,c_3}\left(\frac{\log n}{n}\right)^{\frac{1}{d+2}}$
	\begin{equation*}
		\begin{aligned}
			&\mathbb{E}_{\boldsymbol{X},\boldsymbol{Y}}[\|\widehat{u}_{\phi}-u_{\lambda}^{*}\|_{H^{1}(\Omega)}^2]   \\
			\leq & \frac{2}{c_1\wedge 1} \left[C_{c_3}d(\mathcal{D}+3)(\mathcal{D}+2)\mathcal{W}\sqrt{\mathcal{D}+3+\log (d(\mathcal{D}+2)\mathcal{W})}\left(\frac{\log n}{n}\right)^{1/2} + \frac{c_3+1}{2}\varepsilon^2\right] \\
			& + \frac{2}{c_1\wedge 1} \left[C_{c_3}d\mathcal{D}\mathcal{W}\sqrt{\mathcal{D}+\log\mathcal{W}}\left(\frac{\log n}{n}\right)^{1/2} + \frac{C_{d}}{2}\varepsilon^2\right]\lambda  \\
			\leq & \frac{2}{c_1\wedge 1} \left[C_{c_3} 4d^2(\lceil\log d\rceil+6)(\lceil\log d\rceil+5)\left \lceil   \frac{Cc_2}{\varepsilon}-4\right \rceil^d\cdot\right.  \\
			& \quad\left.\sqrt{\lceil\log d\rceil+6+\log \left(4d^{2}(\lceil\log d\rceil+5)\left \lceil   \frac{Cc_2}{\varepsilon}-4\right \rceil^d\right)}\left(\frac{\log n}{n}\right)^{1/2} + \frac{c_3+1}{2}\varepsilon^2\right]    \\
			& +\frac{2}{c_1\wedge 1} \left[C_{c_3}4d^2(\lceil\log d\rceil+3)\left \lceil   \frac{Cc_2}{\varepsilon}-4\right \rceil^d\cdot\right. \\
			& \quad\left.\sqrt{\lceil\log d\rceil+3+\log \left(4d\left \lceil   \frac{Cc_2}{\varepsilon}-4\right \rceil^d\right)}\left(\frac{\log n}{n}\right)^{1/2} + \frac{C_{d}}{2}\varepsilon^2\right]\lambda  \\
			\leq & C_{c_1,c_2,c_3,d} {\mathcal{O}} \left( n^{-\frac{1}{d+2}}(\log n)^{\frac{d+3}{d+2}}\right)+C_{c_1,c_2,c_3,d} {\mathcal{O}} \left( n^{-\frac{1}{d+2}}(\log n)^{\frac{d+3}{d+2}}\right)\lambda.
		\end{aligned}
	\end{equation*}
	Using Theorem \ref{thm:errdec} and Theorem \ref{thm:penaltyerr}, it holds that for all $\lambda>0$
	\begin{equation}\label{eq:error:lambda}
		\begin{aligned}
			\mathbb{E}_{\boldsymbol{X},\boldsymbol{Y}}\left[\|\widehat{u}_{\phi}-u^{*}\|_{H^{1}(\Omega)}^2\right]
			\leq & C_{c_1,c_2,c_3,d} {\mathcal{O}} \left( n^{-\frac{1}{d+2}}(\log n)^{\frac{d+3}{d+2}}\right)         \\
			& +C_{c_1,c_2,c_3,d} {\mathcal{O}} \left( n^{-\frac{1}{d+2}}(\log n)^{\frac{d+3}{d+2}}\right)\lambda \\
			& +C_{c_3,d}\lambda^{-2}.
		\end{aligned}
	\end{equation}
	We have derive the error estimate for fixed $\lambda$, and now we are in a position to find a proper $\lambda$ and get the convergence rate. Since \eqref{eq:error:lambda} holds for any $\lambda>0$, we take the infimum of $\lambda$:
	\begin{equation*}
		\begin{aligned}
			\mathbb{E}_{\boldsymbol{X},\boldsymbol{Y}}\left[\|\widehat{u}_{\phi}-u^{*}\|_{H^{1}(\Omega)}^2\right]
			\leq \inf_{\lambda>0} & \left\{ C_{c_1,c_2,c_3,d} {\mathcal{O}} \left( n^{-\frac{1}{d+2}}(\log n)^{\frac{d+3}{d+2}} \right)\right.       \\
			& \left.+C_{c_1,c_2,c_3,d} {\mathcal{O}} \left( n^{-\frac{1}{d+2}}(\log n)^{\frac{d+3}{d+2}} \right)\lambda\right. \\
			& \left.+C_{c_3,d}\lambda^{-2}\right\}.
		\end{aligned}
	\end{equation*}
	By taking
	\begin{equation*}
		\lambda \sim n^{\frac{1}{3(d+2)}}(\log n)^{-\frac{d+3}{3(d+2)}},
	\end{equation*}
	we can obtain
	\begin{equation*}
		\mathbb{E}_{\boldsymbol{X},\boldsymbol{Y}}\left[\|\widehat{u}_{\phi}-u^{*}\|_{H^{1}(\Omega)}^2\right]
		\leq C_{c_1,c_2,c_3,d} {\mathcal{O}} \left( n^{-\frac{2}{3(d+2)}}\log n\right).
	\end{equation*}
\end{proof}

\section{Conclusions and Extensions}\label{section:conclusion}
\par This paper provided an analysis of convergence rate for deep Ritz methods for Laplace  equations with Dirichlet boundary condition. Specifically, our study  shed light on how to set depth and width of networks and how to set the penalty parameter to achieve the desired convergence rate in terms of number of training samples. The estimation on the approximation error of deep $\mathrm{ReLU}^{2}$ network is established in $H^1$. The statistical error can be derived technically by the Rademacher complexity of the non-Lipschitz composition of gradient norm and $\mathrm{ReLU}^{2}$ network. We also analysis the error from the boundary penalty method.
\par There are several interesting further research directions. First, the current analysis can be extended to general second order elliptic equations with other boundary conditions. Second, the approximation and statistical error bounds deriving here can be used for studying the nonasymptotic convergence rate for residual based method, such as PINNs. Finally, the similar result may be applicable to deep Ritz methods for optimal control problems and inverse problems.

\begin{acknowledgements}
	Y. Jiao is supported in part by the National Science Foundation of China under Grant 11871474 and by the research fund of KLATASDSMOE of China.  X. Lu is partially supported by the National Science Foundation of China (No. 11871385), the National Key Research and Development Program of China (No.2018YFC1314600) and the Natural Science Foundation of Hubei Province (No. 2019CFA007), and by the research fund of KLATASDSMOE of China.  J.  Yang was supported by NSFC (Grant No. 12125103, 12071362), the National Key Research and Development Program of China (No. 2020YFA0714200) and the Natural Science Foundation of Hubei Province (No. 2019CFA007).
\end{acknowledgements}

\appendix
\section{Appendix}
\subsection{Proof of Lemma \ref{lemma:regularity}}\label{app:lemma:regularity}
We claim that $a_\lambda$ is coercive on $H^1(\Omega)$. In fact,
\begin{equation*}
	a_\lambda(u,u)=a(u,u)+\lambda\int_{\partial\Omega}u^2\ \mathrm{d}s\geq C\|u\|^2_{H^1(\Omega)},\ \forall u\in H^1(\Omega),
\end{equation*}
where $C$ is constant from Poincar\'{e} inequality \cite{Gilbarg1983Elliptic}. Thus, there exists a unique weak solution $u_\lambda^*\in H^1(\Omega)$ such that \begin{equation*}
	a_\lambda(u_\lambda^*,v)=f(v), \ \forall v\in H^1(\Omega).
\end{equation*}
We can check $u_\lambda^*$ is the unique minimizer of $\mathcal{L}_{\lambda}(u)$ by standard technique.
\par We will study the regularity for weak solutions of \eqref{eq:robin}. For the following discussion, we first introduce several useful classic results of second order elliptic equations in \cite{Evans2010PartialDE,Gilbarg1983Elliptic}.
\begin{lemma}\label{lemma:regularity:dirichlet_f}
	Assume $w\in L^{\infty}(\Omega)$, $f\in L^{2}(\Omega)$, $g\in H^{3/2}(\partial\Omega)$ and $\partial \Omega$ is sufficiently smooth. Suppose that $u \in H^{1}(\Omega)$ is a weak solution of the elliptic boundary-value problem
	\begin{equation*}
		\left\{
		\begin{aligned}
			-\Delta u+wu & =f &  & \text { in } \Omega           \\
			u            & =g &  & \text { on } \partial \Omega.
		\end{aligned}
		\right.
	\end{equation*}
	Then $u \in H^{2}(\Omega)$ and there exists a positive constant $C$, depending only on $\Omega$ and $w$, such that
	\begin{equation*}
		\|u\|_{H^{2}(\Omega)} \leq C\left( \|f\|_{L^{2}(\Omega)}+\|g\|_{H^{3/2}(\partial\Omega)} \right).
	\end{equation*}
\end{lemma}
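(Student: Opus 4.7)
The plan is to reduce the inhomogeneous Dirichlet problem to one with zero boundary data and then invoke the classical $H^2$-regularity theorem for the pure Laplacian on a smooth bounded domain. First I would use the right inverse of the trace operator (e.g., Gilbarg--Trudinger, Theorem 8.8) to pick an extension $G\in H^2(\Omega)$ of $g$ satisfying the bound $\|G\|_{H^2(\Omega)}\leq C_\Omega\|g\|_{H^{3/2}(\partial\Omega)}$. Setting $\tilde u:=u-G\in H_0^1(\Omega)$, a direct computation shows $\tilde u$ is the weak solution of
\begin{equation*}
-\Delta \tilde u + w\tilde u = \tilde f := f-(-\Delta G+wG) \text{ in }\Omega, \qquad \tilde u = 0 \text{ on }\partial\Omega,
\end{equation*}
with $\|\tilde f\|_{L^2(\Omega)}\leq \|f\|_{L^2(\Omega)}+(1+\|w\|_{L^\infty(\Omega)})\|G\|_{H^2(\Omega)}$.

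Next, I would absorb the zero-order term into the right-hand side. Since $w\in L^\infty(\Omega)$ and $\tilde u\in H_0^1(\Omega)\hookrightarrow L^2(\Omega)$, the product $w\tilde u$ lies in $L^2(\Omega)$, so $\tilde u$ weakly satisfies the pure Poisson equation $-\Delta\tilde u=F$ with $F:=\tilde f-w\tilde u\in L^2(\Omega)$ and homogeneous Dirichlet data. A standard energy estimate using coercivity of $a_0(u,v):=\int_\Omega\nabla u\cdot\nabla v+wuv\,\mathrm{d}x$ on $H_0^1(\Omega)$ together with the Lax--Milgram identification yields $\|\tilde u\|_{H^1(\Omega)}\leq C\|\tilde f\|_{L^2(\Omega)}$, whence
\begin{equation*}
\|F\|_{L^2(\Omega)}\leq \|\tilde f\|_{L^2(\Omega)}+\|w\|_{L^\infty(\Omega)}\|\tilde u\|_{L^2(\Omega)}\leq C_{\Omega,w}\bigl(\|f\|_{L^2(\Omega)}+\|g\|_{H^{3/2}(\partial\Omega)}\bigr).
\end{equation*}

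The third step is to invoke the classical $H^2$-regularity theorem for the Dirichlet problem of the Laplacian on a smooth bounded domain (Evans, Theorem 4 of \S6.3, or Gilbarg--Trudinger, Theorem 8.12), which gives $\tilde u\in H^2(\Omega)$ with $\|\tilde u\|_{H^2(\Omega)}\leq C_\Omega\|F\|_{L^2(\Omega)}$. Adding back the extension, $u=\tilde u+G\in H^2(\Omega)$ and the triangle inequality produces the desired estimate
\begin{equation*}
\|u\|_{H^2(\Omega)}\leq \|\tilde u\|_{H^2(\Omega)}+\|G\|_{H^2(\Omega)}\leq C\bigl(\|f\|_{L^2(\Omega)}+\|g\|_{H^{3/2}(\partial\Omega)}\bigr),
\end{equation*}
where $C$ depends only on $\Omega$ and $\|w\|_{L^\infty(\Omega)}$.

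The main obstacle is not in the reduction step (which is essentially bookkeeping) but hidden inside the invoked $H^2$-regularity theorem, whose proof requires interior difference quotients together with a flattening-of-the-boundary argument near $\partial\Omega$; since $\partial\Omega$ is assumed sufficiently smooth these are standard, so I would simply cite the theorem. The minor subtlety worth flagging is that $w$ is only assumed $L^\infty$ rather than Lipschitz, which is why I move the $w\tilde u$ term to the right-hand side before invoking the regularity theorem for $-\Delta$ instead of applying regularity directly to the operator $-\Delta+w$.
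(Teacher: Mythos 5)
Your argument is correct and is precisely the standard lifting-plus-regularity proof that the paper delegates entirely to the citation ``See \cite{Evans2010PartialDE}'' --- the paper gives no proof of its own beyond that reference. Your reduction (trace extension $G$, subtraction to get homogeneous data, moving $w\tilde u$ to the right-hand side, and invoking $H^2$ regularity for $-\Delta$) is exactly the textbook route, and the bookkeeping of constants is consistent with the lemma's claim that $C$ depends only on $\Omega$ and $w$.
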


\begin{proof}
	See \cite{Evans2010PartialDE}.
\end{proof}

\begin{lemma}\label{lemma:regularity:neumann_g}
	Assume $w\in L^{\infty}(\Omega)$, $f\in L^{2}(\Omega)$, $g\in H^{1/2}(\partial\Omega)$ and $\partial \Omega$ is sufficiently smooth. Suppose that $u \in H^{1}(\Omega)$ is a weak solution of the elliptic boundary-value problem
	\begin{equation*}
		\left\{
		\begin{aligned}
			-\Delta u+wu                  & =f &  & \text { in } \Omega           \\
			\frac{\partial u}{\partial n} & =g &  & \text { on } \partial \Omega.
		\end{aligned}
		\right.
	\end{equation*}
	Then $u \in H^{2}(\Omega)$ and there exists a positive constant $C$, depending only on $\Omega$ and $w$, such that
	\begin{equation*}
		\|u\|_{H^{2}(\Omega)} \leq C\left( \|f\|_{L^{2}(\Omega)}+\|g\|_{H^{1/2}(\partial\Omega)} \right).
	\end{equation*}
\end{lemma}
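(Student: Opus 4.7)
The plan is to adapt the standard Nirenberg difference quotient technique for elliptic regularity up to the boundary, as presented in Evans Chapter~6 or Gilbarg--Trudinger. The argument decomposes into an interior estimate on compactly contained subdomains, a boundary estimate in local charts that flatten $\partial\Omega$, and a final assembly via a smooth finite partition of unity subordinate to a covering of $\overline{\Omega}$. The coercivity of $a_\lambda$ already established for Lemma~\ref{lemma:regularity} will be re-used at the end to absorb the $\|u\|_{H^1(\Omega)}$ terms that appear in the intermediate estimates.

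For the interior part I would fix $\Omega' \subset\subset \Omega$ and a cutoff $\eta \in C_c^\infty(\Omega)$ with $\eta \equiv 1$ on $\Omega'$. Inserting the test function $v = -D_k^{-h}(\eta^2 D_k^h u)$ into the weak formulation, where $D_k^h$ denotes the forward difference quotient in direction $e_k$, and applying Cauchy--Schwarz together with Young's inequality, one obtains
\begin{equation*}
\|D_k^h \nabla u\|_{L^2(\Omega')} \leq C\bigl(\|f\|_{L^2(\Omega)} + \|u\|_{H^1(\Omega)}\bigr),
\end{equation*}
uniformly in $h$. Letting $h\to 0$ gives $\partial_k u \in H^1(\Omega')$ for every $k$, hence $u \in H^2(\Omega')$ with the quantitative estimate.

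For the boundary part, fix $x_0 \in \partial\Omega$ and straighten the boundary locally by a $C^2$ diffeomorphism $\Psi: B_r(x_0)\cap\Omega \to B_1^+(0)$. In the new coordinates, the PDE becomes a variable-coefficient divergence-form equation $-\partial_i(a^{ij}\partial_j \tilde u) + \tilde w \tilde u = \tilde f$ with smooth coefficients and ellipticity preserved, while the Neumann condition becomes $a^{dj}\partial_j \tilde u = \tilde g$ on $\{x_d=0\}$ with $\tilde g \in H^{1/2}$. Using only tangential difference quotients $D_k^h$ ($k<d$) with a cutoff $\eta$ supported near $0$, the test function $v = -D_k^{-h}(\eta^2 D_k^h \tilde u)$ is admissible since, unlike the Dirichlet case, it need not vanish on $\{x_d=0\}$. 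Integration by parts yields a coercive principal term controlling $\|D_k^h \nabla \tilde u\|_{L^2}^2$ against $\|\tilde f\|_{L^2}$, $\|\tilde u\|_{H^1}$, and a boundary term $\int_{\{x_d=0\}} \tilde g\, D_k^{-h}(\eta^2 D_k^h \tilde u)\,dx'$ which is handled by $H^{1/2}$--$H^{-1/2}$ duality and the trace theorem. Sending $h\to 0$ produces all tangential-tangential and tangential-normal second derivatives of $\tilde u$ in $L^2$, and the missing purely-normal derivative $\partial_d^2 \tilde u$ is then recovered algebraically by solving for it in the PDE, which is legitimate because $a^{dd} \geq \lambda_0 > 0$ by ellipticity and every other term lies in $L^2$.

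Covering $\partial\Omega$ by finitely many such charts and combining with the interior estimate through a partition of unity yields
\begin{equation*}
\|u\|_{H^2(\Omega)} \leq C\bigl(\|f\|_{L^2(\Omega)} + \|g\|_{H^{1/2}(\partial\Omega)} + \|u\|_{H^1(\Omega)}\bigr).
\end{equation*}
To remove the $H^1$ term, I would test the weak form against $u$ itself and invoke the coercivity of $a_\lambda$ (from Poincar\'e plus $w \geq c_1 \geq 0$, or from the weak form of the Neumann problem together with the trace inequality) to get $\|u\|_{H^1(\Omega)} \leq C(\|f\|_{L^2(\Omega)} + \|g\|_{H^{1/2}(\partial\Omega)})$. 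I expect the main obstacle to be the boundary step: specifically, estimating the boundary integral involving $\tilde g$ against a difference-quotient test function uniformly in $h$, and recovering the normal-normal second derivative without losing regularity on $g$; the tangential-only use of $D_k^h$ is what makes this tractable and is the key distinction from the Dirichlet argument.
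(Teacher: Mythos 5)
The paper does not actually prove this lemma --- its ``proof'' is the single line ``See \cite{Gilbarg1983Elliptic}'' --- so your proposal is not an alternative to the paper's argument but a reconstruction of the classical result the paper delegates to the literature. Your outline is the standard Nirenberg difference-quotient proof and is essentially correct: interior estimates with $v=-D_k^{-h}(\eta^2 D_k^h u)$, boundary flattening with tangential quotients only (the admissibility of a test function that does not vanish on $\{x_d=0\}$ being exactly what distinguishes the Neumann case from the Dirichlet one), algebraic recovery of $\partial_d^2\tilde u$ from the equation using $a^{dd}\ge\lambda_0>0$, and assembly by a partition of unity. The boundary term you flag as the main obstacle does go through, but it is worth recording how: one needs the uniform-in-$h$ bound $\|D_k^{-h}\psi\|_{H^{-1/2}(\{x_d=0\})}\le C\|\psi\|_{H^{1/2}(\{x_d=0\})}$ for tangential difference quotients, after which the trace theorem turns the dual norm into a factor of $\|\eta^2 D_k^h\tilde u\|_{H^{1}(B_1^+)}$, whose gradient part is absorbed into the coercive principal term by Young's inequality rather than estimated outright.

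The one place your argument can genuinely fail is the final step removing $\|u\|_{H^1(\Omega)}$. The Neumann bilinear form $a(u,v)=\int_\Omega\nabla u\cdot\nabla v+wuv\,\mathrm{d}x$ is coercive on $H^1(\Omega)$ only when $\operatorname{ess\,inf}w>0$; the paper's standing hypothesis $w\ge c_1\ge 0$ permits $c_1=0$, and in that case constants show the asserted estimate $\|u\|_{H^{2}(\Omega)} \leq C(\|f\|_{L^{2}(\Omega)}+\|g\|_{H^{1/2}(\partial\Omega)})$ is simply false (this is a defect of the lemma as stated, but your proof must either assume $c_1>0$ or normalize $u$ to close). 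Relatedly, appealing to ``the coercivity of $a_\lambda$ already established for Lemma~\ref{lemma:regularity}'' is not legitimate here: $a_\lambda$ owes its coercivity to the boundary penalty term $\lambda\int_{\partial\Omega}uv\,\mathrm{d}s$, which is absent from the pure Neumann form, so that argument does not transfer.
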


\begin{proof}
	See \cite{Gilbarg1983Elliptic}.
\end{proof}

\begin{lemma}\label{lemma:regularity:robin}
	Assume $w\in L^{\infty}(\Omega)$, $g\in H^{1/2}(\partial\Omega)$, $\partial \Omega$ is sufficiently smooth and $\lambda>0$. Let $u\in H^1(\Omega)$ be the weak solution of the following Robin problem
	\begin{equation}\label{eq:regularity:robin}
		\left\{\begin{aligned}
			-\Delta u + wu                                    & =0 &  & \text { in } \Omega          \\
			\frac{1}{\lambda} \frac{\partial u}{\partial n}+u & =g &  & \text { on } \partial\Omega.
		\end{aligned}\right.
	\end{equation}
	Then $u\in H^{2}(\Omega)$ and there exists a positive constant $C$ independent of $\lambda$ such that
	\begin{equation*}
		\left\|u\right\|_{H^{2}(\Omega)} \leq C \lambda \|g\|_{H^{1/2}(\partial\Omega)}.
	\end{equation*}
\end{lemma}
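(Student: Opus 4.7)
The plan is to recast the Robin boundary condition as an equivalent inhomogeneous Neumann condition and then invoke Lemma \ref{lemma:regularity:neumann_g}. The Robin identity $\tfrac{1}{\lambda}\partial_n u + u = g$ rearranges to $\partial_n u = \lambda(g - Tu)$, and since $u \in H^1(\Omega)$ implies $Tu \in H^{1/2}(\partial\Omega)$ by the trace theorem, the effective Neumann data $h := \lambda(g - Tu)$ lies in $H^{1/2}(\partial\Omega)$. Hence $u$ is a weak solution of the Neumann boundary-value problem $-\Delta u + wu = 0$ with boundary data $h$, and Lemma \ref{lemma:regularity:neumann_g} immediately delivers $u \in H^2(\Omega)$ together with
\begin{equation*}
	\|u\|_{H^2(\Omega)} \leq C\,\|h\|_{H^{1/2}(\partial\Omega)} \leq C\lambda\bigl(\|g\|_{H^{1/2}(\partial\Omega)} + \|Tu\|_{H^{1/2}(\partial\Omega)}\bigr).
\end{equation*}

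The next step is a quantitative $H^1$ estimate on $u$, obtained by testing the weak formulation of the Robin problem against $v = u$:
\begin{equation*}
	\|\nabla u\|_{L^2(\Omega)}^2 + \int_\Omega w u^2\,dx + \lambda\,\|u\|_{L^2(\partial\Omega)}^2 = \lambda \int_{\partial\Omega} g u\, ds.
\end{equation*}
Applying Young's inequality to the right-hand side yields $\|u\|_{L^2(\partial\Omega)} \leq \|g\|_{L^2(\partial\Omega)}$ and $\|\nabla u\|_{L^2(\Omega)}^2 \leq \lambda\,\|g\|_{L^2(\partial\Omega)}^2$, and a Poincar\'e-type inequality involving the boundary $L^2$ norm combines these into $\|u\|_{H^1(\Omega)} \lesssim \sqrt{\lambda}\,\|g\|_{H^{1/2}(\partial\Omega)}$. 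The trace theorem then bounds $\|Tu\|_{H^{1/2}(\partial\Omega)} \lesssim \sqrt{\lambda}\,\|g\|_{H^{1/2}(\partial\Omega)}$.

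The main obstacle is that substituting this crude trace bound into the Neumann regularity estimate produces only the suboptimal bound $\|u\|_{H^2} \leq C\lambda^{3/2}\|g\|_{H^{1/2}}$, which is off by $\sqrt{\lambda}$ from the claimed linear dependence. The remedy I have in mind is to exploit the Robin identity in the form $Tu - g = -\tfrac{1}{\lambda}T(\partial_n u)$, which combined with the trace estimate $\|T(\partial_n u)\|_{H^{1/2}(\partial\Omega)} \leq C\|u\|_{H^2(\Omega)}$ gives the sharper inequality
\begin{equation*}
	\|Tu\|_{H^{1/2}(\partial\Omega)} \leq \|g\|_{H^{1/2}(\partial\Omega)} + \tfrac{C}{\lambda}\|u\|_{H^2(\Omega)}.
\end{equation*}
Feeding this back into the Neumann estimate produces an inequality of the form $\|u\|_{H^2} \leq 2C\lambda\|g\|_{H^{1/2}} + C'\|u\|_{H^2}$, and the delicate final step is to close this by absorbing the $\|u\|_{H^2}$ term on the right-hand side. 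The cleanest route will be a tangential-difference-quotient argument applied to the Robin problem, in which the tangential derivative $\partial_\tau u$ satisfies its own Robin problem with data $\partial_\tau g \in H^{-1/2}(\partial\Omega)$ (treated in duality within the energy estimate), with the normal second derivatives recovered directly from the PDE $-\Delta u = -wu$; this is the genuinely technical part of the argument.
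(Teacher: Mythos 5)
Your first step coincides with the paper's: rewrite the Robin condition as Neumann data $\partial_n u = \lambda(g - Tu)$ and invoke Lemma \ref{lemma:regularity:neumann_g} to reduce everything to a bound on $\|Tu\|_{H^{1/2}(\partial\Omega)}$ that is \emph{uniform in} $\lambda$. But that uniform trace bound is the entire content of the lemma, and your proposal does not establish it. Your energy estimate gives only $\|Tu\|_{H^{1/2}} \lesssim \sqrt{\lambda}\,\|g\|$, which you correctly identify as insufficient. Your proposed remedy --- feeding $\|Tu\|_{H^{1/2}} \leq \|g\|_{H^{1/2}} + \tfrac{C}{\lambda}\|u\|_{H^2}$ back into the Neumann estimate --- produces $\|u\|_{H^2} \leq C\lambda\|g\|_{H^{1/2}} + C C' \|u\|_{H^2}$ where $C$ and $C'$ are fixed constants coming from elliptic regularity and the trace theorem; the product $CC'$ has no reason to be less than $1$, so the absorption you need is not available. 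You acknowledge this and defer to a tangential-difference-quotient argument, but that is left entirely as a sketch, and it is not evident it would escape the same circularity: the difference-quotient route still requires a $\lambda$-uniform a priori control of the boundary trace, which is exactly what is missing.

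The paper closes this gap with a different device: it introduces the Dirichlet-to-Neumann map $\widetilde{T}: u|_{\partial\Omega} \mapsto \partial_n u|_{\partial\Omega}$ for solutions of $-\Delta u + wu = 0$, observes that the Robin condition reads $(\lambda^{-1}\widetilde{T} + I)\,Tu = g$, and uses the variational identity with test function $v=u$ to show $\langle(\lambda^{-1}\widetilde{T}+I)Tu, Tu\rangle \geq \|Tu\|_{L^2(\partial\Omega)}^2$, i.e.\ $\widetilde{T}\geq 0$, so that $(\lambda^{-1}\widetilde{T}+I)^{-1}$ is bounded \emph{uniformly in} $\lambda$. This yields $\|Tu\|_{H^{1/2}(\partial\Omega)} \leq C\|g\|_{H^{1/2}(\partial\Omega)}$ with $C$ independent of $\lambda$, after which the Neumann regularity estimate gives $\|u\|_{H^2} \leq C\lambda\|g\|_{H^{1/2}}$ exactly as in your first display. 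If you want to complete your argument, this operator-positivity observation (or an equivalent one) is the ingredient you must supply; without it, your proof stalls at the $\lambda^{3/2}$ bound.
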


\begin{proof}
	Following the idea which is proposed in \cite{Costabel1996ASP} in a slightly different context. We first estimate the trace $Tu=\left.u\right|_{\partial\Omega}$. We define the Dirichlet-to-Neumann map
	\begin{equation*}
		\widetilde{T}:\left.u\right|_{\partial\Omega} \mapsto \left.\frac{\partial u}{\partial n}\right|_{\partial\Omega},
	\end{equation*}
	where $u$ satisfies $-\Delta u+wu=0$ in $\Omega$, then
	\begin{equation*}
		Tu=\left(\frac{1}{\lambda}\widetilde{T}+I \right)^{-1}g.
	\end{equation*}
	Now we are going to show that $\frac{1}{\lambda}\widetilde{T}+I$ is a positive definite operator in $L^{2}(\partial\Omega)$. We notice that the variational formulation of \eqref{eq:regularity:robin} can be read as follow:
	\begin{equation*}
		\int_{\Omega}\nabla u\cdot\nabla vdx+\int_{\Omega}wuvdx+\lambda\int_{\partial\Omega}uvds=\lambda\int_{\partial\Omega}gvds, \ \forall v\in H^{1}(\Omega).
	\end{equation*}
	Taking $v=u$, then we have
	\begin{equation*}
		\| Tu \|_{L^{2}(\partial\Omega)}^{2} \leq \left\langle \left(\frac{1}{\lambda}\widetilde{T}+I \right)Tu,Tu \right\rangle.
	\end{equation*}
	This means that $\lambda^{-1}\widetilde{T}+I$ is a positive definite operator in $L^{2}(\partial\Omega)$, and further, $(\lambda^{-1}\widetilde{T}+I)^{-1}$ is bounded. We have the estimate
	\begin{equation}\label{eq:trace:estimate}
		\| Tu \|_{H^{1/2}(\partial\Omega)} \leq C\| g \|_{H^{1/2}(\partial\Omega)}.
	\end{equation}
	We rewrite the Robin problem \eqref{eq:regularity:robin} as follows
	\begin{equation*}
		\left\{\begin{aligned}
			-\Delta u + wu                  & =0                                                    &  & \text { in } \Omega          \\
			\frac{\partial u}{\partial n}+u & = \lambda \left(g-\left(1-\lambda^{-1}\right)u\right) &  & \text { on } \partial\Omega.
		\end{aligned}\right.
	\end{equation*}
	By Lemma \ref{lemma:regularity:neumann_g} we have
	\begin{equation}\label{eq:neumann:estimate}
		\| u \|_{H^{2}(\Omega)} \leq C\lambda \left\| g-\left(1-\lambda^{-1}\right)Tu \right\|_{H^{1/2}(\partial\Omega)} \leq C\lambda \left( \left\| g \right\|_{H^{1/2}(\partial\Omega)} + \left\| Tu \right\|_{H^{1/2}(\partial\Omega)} \right).
	\end{equation}
	Combining \eqref{eq:trace:estimate} and \eqref{eq:neumann:estimate}, we obtain the desired estimation.
\end{proof}
With the help of above lemmas, we now turn to proof the regularity properties of the weak solution.
\begin{theorem}
	Assume $w\in L^{\infty}(\Omega)$, $f \in L^{2}(\Omega)$. Suppose that $u \in H^{1}(\Omega)$ is a weak solution of the boundary-value problem \eqref{eq:robin}. If $\partial \Omega$ is sufficiently smooth, then $u \in H^{2}(\Omega)$, and we have the estimate
	\begin{equation*}
		\|u\|_{H^{2}(\Omega)} \leq C\|f\|_{L^{2}(\Omega)},
	\end{equation*}
	where the constant $C$ depending only on $\Omega$ and $w$.
\end{theorem}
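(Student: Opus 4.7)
The plan is to use a splitting argument $u = u_1 + u_2$ to reduce the Robin problem \eqref{eq:robin} to problems already handled by Lemmas \ref{lemma:regularity:dirichlet_f} and \ref{lemma:regularity:robin}: $u_1$ will absorb the inhomogeneous source $f$ via a zero-Dirichlet boundary condition, and $u_2$ will correct the resulting mismatch on $\partial\Omega$ while satisfying a homogeneous PDE inside $\Omega$.

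Concretely, I would first define $u_1 \in H_0^1(\Omega)$ as the weak solution of the Dirichlet problem $-\Delta u_1 + w u_1 = f$ in $\Omega$ with $u_1 = 0$ on $\partial\Omega$. Since the boundary data is trivially in $H^{3/2}(\partial\Omega)$, Lemma \ref{lemma:regularity:dirichlet_f} gives $u_1 \in H^2(\Omega)$ with
\begin{equation*}
\|u_1\|_{H^2(\Omega)} \leq C \|f\|_{L^2(\Omega)},
\end{equation*}
and the trace theorem then yields $\frac{\partial u_1}{\partial n} \in H^{1/2}(\partial\Omega)$ with norm bounded by the same quantity. Setting $u_2 := u - u_1$ and using $u_1 = 0$ on $\partial\Omega$, a direct computation shows that $u_2$ weakly solves
\begin{equation*}
-\Delta u_2 + w u_2 = 0 \ \text{in} \ \Omega, \qquad \frac{1}{\lambda}\frac{\partial u_2}{\partial n}+u_2 = -\frac{1}{\lambda}\frac{\partial u_1}{\partial n} \ \text{on} \ \partial\Omega,
\end{equation*}
which is exactly of the form treated in Lemma \ref{lemma:regularity:robin} with boundary data $g := -\frac{1}{\lambda}\frac{\partial u_1}{\partial n} \in H^{1/2}(\partial\Omega)$. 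Invoking that lemma yields
\begin{equation*}
\|u_2\|_{H^2(\Omega)} \leq C \lambda \cdot \frac{1}{\lambda} \left\|\frac{\partial u_1}{\partial n}\right\|_{H^{1/2}(\partial\Omega)} \leq C \|f\|_{L^2(\Omega)},
\end{equation*}
and the triangle inequality $\|u\|_{H^2(\Omega)} \leq \|u_1\|_{H^2(\Omega)} + \|u_2\|_{H^2(\Omega)}$ closes the argument.

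The main subtlety I anticipate is ensuring that the explicit factor $\lambda$ appearing in the estimate of Lemma \ref{lemma:regularity:robin} cancels precisely against the $\lambda^{-1}$ built into the boundary data of the auxiliary problem for $u_2$; this cancellation is what produces a final constant depending only on $\Omega$ and $w$ as required, and is in fact the reason Lemma \ref{lemma:regularity:robin} was stated with that explicit $\lambda$-scaling. Beyond this bookkeeping, the only other fact to verify is that $\frac{\partial u_1}{\partial n}$ genuinely lies in $H^{1/2}(\partial\Omega)$, which is immediate from $u_1 \in H^2(\Omega)$ and the smoothness of $\partial\Omega$, so no new ingredients beyond the three regularity lemmas and the standard trace inequality should be required.
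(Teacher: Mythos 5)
Your proposal is correct and follows essentially the same route as the paper: the paper also splits $u$ into a zero-Dirichlet piece absorbing $f$ (its $u_0$, estimated via Lemma \ref{lemma:regularity:dirichlet_f}) plus a homogeneous-interior Robin correction handled by Lemma \ref{lemma:regularity:robin}, with the $\lambda$ from that lemma cancelling a $\lambda^{-1}$ exactly as you describe. The only difference is cosmetic bookkeeping — the paper writes $u=u_0+\lambda^{-1}u_1$ with boundary data $-\partial u_0/\partial n$, while you put the $\lambda^{-1}$ into the boundary data of $u_2=u-u_1$ instead; the two are the same decomposition up to rescaling the correction term.
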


\begin{proof}
	\par We decompose \eqref{eq:robin} into two equations
	\begin{equation}\label{eq:robin:u0}
		\left\{ \begin{aligned}
			-\Delta u_{0}+wu_{0} & =f &  & \text{in}\ \Omega          \\
			u_{0}                & =0 &  & \text{on}\ \partial\Omega, \\
		\end{aligned}\right.
	\end{equation}
	\begin{equation}\label{eq:robin:u1}
		\left\{ \begin{aligned}
			-\Delta u_{1}+wu_{1}                                     & =0                                  &  & \text{in}\ \Omega          \\
			\frac{1}{\lambda}\frac{\partial u_{1}}{\partial n}+u_{1} & =-\frac{\partial u_{0}}{\partial n} &  & \text{on}\ \partial\Omega. \\
		\end{aligned}\right.
	\end{equation}
	and obtain the solution of \eqref{eq:robin}
	\begin{equation*}
		u=u_{0}+\frac{1}{\lambda}u_{1}.
	\end{equation*}
	Applying Lemma \ref{lemma:regularity:dirichlet_f} to \eqref{eq:robin:u0}, we have
	\begin{equation}\label{eq:reg:u0}
		\|u_{0}\|_{H^{2}(\Omega)} \leq C \|f\|_{L^{2}(\Omega)},
	\end{equation}
	where $C$ depends on $\Omega$ and $w$. Using Lemma \ref{lemma:regularity:robin}, it is easy to obtain
	\begin{equation}\label{eq:reg:u1}
		\left\|u_{1}\right\|_{H^{2}(\Omega)} \leq C\lambda\left\|\frac{\partial u_{0}}{\partial n}\right\|_{H^{1/2}(\partial\Omega)}
		\leq  C\lambda\|u_{0}\|_{H^{2}(\Omega)},
	\end{equation}
	where the last inequality follows from the trace theorem. Combining \eqref{eq:reg:u0} and \eqref{eq:reg:u1}, the desired estimation can be derived by triangle inequality.
\end{proof}

%
%

\bibliographystyle{spmpsci}      
\bibliography{deep_ritz_ref}   

\end{document}